\documentclass[reqno]{amsart}

\usepackage[colorlinks=true,breaklinks=true,bookmarks=true,urlcolor=blue,
citecolor=blue,linkcolor=blue,bookmarksopen=false,draft=false]{hyperref}

\usepackage{tikz}
\usepackage{verbatim}
\usepackage{color}
\usepackage{amsmath}
\usepackage{amssymb}
\usepackage{amsthm}
\definecolor{MyDarkGreen}{rgb}{0,0.45,0}
\definecolor{MyDarkRed}{rgb}{0.9,0,0}

\newtheorem{theorem}{Theorem}
\newtheorem{lemma}{Lemma}

\newtheorem{corollary}{Corollary}

\newtheorem{example}[lemma]{Example}
\newtheorem{remark}[lemma]{Remark}

\newcommand{\E}{{{\bf E}}}
\newcommand{\eqdef}{\mathrel{:=}}
\newcommand{\myvec}[1]{\mathbf{#1}}
\newcommand{\eps}{\varepsilon}
\newcommand{\NN}{\mathbb N}

\newcommand{\vertiii}[1]{{\left\vert\mkern-1.5mu\left\vert\mkern-1.5mu\left\vert #1 
		\right\vert\mkern-1.5mu\right\vert\mkern-1.5mu\right\vert}}
\newcommand{\altvertiii}[1]{{\big\vert\mkern-1.5mu\big\vert\mkern-1.5mu\big\vert #1 
		\big\vert\mkern-1.5mu\big\vert\mkern-1.5mu\big\vert}}
\newcommand{\tddt}{\tfrac{d \hfill}{dt}}
%

%


\title[Comp. limit of discrete Bass models]{Compartmental limit of discrete Bass models on networks}
\author[G. Fibich, A. Golan, and S. Schochet]{Gadi Fibich, Amit Golan, and Steve Schochet\\
	\tiny{Department of Applied Mathematics, Tel Aviv University (\href{mailto:fibich@tau.ac.il}{fibich@tau.ac.il}), (\href{mailto:amitgolan33@gmail.com}{amitgolan33@gmail.com}),
		(\href{mailto:schochet@tauex.tau.ac.il}{schochet@tauex.tau.ac.il}).}}
\thanks{Keywords:
	Stochastic models, master equations, convergence, rate of convergence, compartmental models, diffusion in networks, ordinary differential equations, heterogeneity}
\thanks{		
	MSC[2020] Primary: 91D30, Secondary: 34E10, 60J80, 92D25}

\begin{document}
	\begin{abstract}
		We introduce a new method for proving the convergence and the rate of convergence of discrete Bass models on various networks to their respective compartmental Bass models, as the population size~$M$ becomes infinite.
		In this method, the full set of master equations
		is reduced to a smaller system of equations, which is closed and exact. 
		The reduced finite system is embedded into an infinite system, 
		and the convergence of that system to the infinite limit system
		is proved using standard ODE estimates.	Finally, an ansatz provides an exact closure of the infinite limit system, which reduces that system to the compartmental model.
		
		Using this method, we show that when the network is complete and homogeneous, the discrete Bass model converges to the original 1969 compartmental Bass model, at the rate of $1/M$. When the network is circular, however, the compartmental limit is different, and the rate of convergence is exponential in $M$. In the case of a heterogeneous network that
		consists of $K$ homogeneous groups, the limit is given by a heterogeneous compartmental Bass model, and the rate of convergence is $1/M$. Using this compartmental model, we show that when the heterogeneity in the external
		and internal influence parameters among the $K$ groups is positively  monotonically related, heterogeneity slows down the diffusion.
	\end{abstract}
	
	\maketitle
	\section{Introduction}
	\label{sec:introduction}
	Diffusion of innovations in networks has attracted the attention of researchers  
	in physics, mathematics, biology, computer science, social sciences, economics, and management science,
	as it concerns the spreading of ``items'' ranging from diseases and computer viruses to rumors, information, opinions, technologies, and 
	innovations~\cite{Albert-00,Anderson-92,Jackson-08,Pastor-Satorras-01,Rogers-03,Strang-98}. 
	In marketing, diffusion of new products plays a key role, with applications in retail service, industrial technology, agriculture, and in educational, pharmaceutical, and consumer-durables markets \cite{Mahajan-93}.

	The first quantitative model of the diffusion of new products was proposed in 1969 by 
	Bass~\cite{Bass-69}. 
	In this model, the rate of change of the number of individuals who adopted the product by time $t$ is 
	\begin{equation}
		\label{eq:Bass_intro}
		n'(t)= \left(M-n\right)\left(p+\frac{q}{M}n\right), \qquad n(0)=0,
	\end{equation}
	where $n$ is the number of adopters, $M$ is the population size, $M-n$ are the remaining potential adopters, $p$ is the rate of external influences by mass media (TV, newspapers,...) on any nonadopter to adopt, and $\frac{q}{M}$ is the rate of internal influences by any adopter on any nonadopter to adopt (``word of mouth", ``peer effect"). Internal influences are additive, so that the overall rate of internal influences is proportional to $n$.
	
	The Bass model~\eqref{eq:Bass_intro} is a {\em compartmental model}. Thus, the population is divided into two {\em compartments} (groups), adopters and nonadopters, and individuals move between these two compartments at the rate given by~\eqref{eq:Bass_intro}. The Bass model is one of the most cited papers in \textit{Management Science}~\cite{Hopp-04}. Almost all its extensions have also been compartmental models;  
	given by a deterministic ODE or ODEs. The main advantage of compartmental models is that they are easy to analyze. {\em From a modeling perspective, however, one should start from first principles, and model the adoption of each individual using a stochastic ``particle model".} The macroscopic/aggregate dynamics should then be {\em derived} from this discrete Bass model, rather than assumed phenomenologically, which is done in compartmental Bass models. Moreover, the discrete Bass model allows us to relax the assumption that all individuals are connected (i.e., that the social network is a ``complete graph") and have {\em any network structure}. The discrete Bass model also enables us to relax the assumption that individuals are homogeneous, and allows for {\em heterogeneous individuals}, which is much more realistic.
	
	At present, the only rigorous result on the relation between discrete and compartmental Bass models is by Niu~\cite{Niu-02}, who {\em derived} the compartmental Bass model~\eqref{eq:Bass_intro} as the $M\to\infty$ limit of the discrete Bass model on a homogeneous complete network. The approach in~\cite{Niu-02}, however, does not extend to other types of networks, nor does it provide the rate of convergence. Fibich and Gibori derived an explicit expression for the macroscopic diffusion in the discrete Bass model on infinite circles~\cite{OR-10}. They did not prove rigorously, however, that this expression is the limit of the discrete Bass model on a circle with $M$ nodes as $M\to\infty$, nor did they find the rate of convergence.
	
	In this paper, we present a novel method for proving the convergence of discrete Bass models.{\em This method can be applied to various network types, and it also provides the convergence rate}. Since real networks are finite, the convergence rate provides an estimate for the difference between a finite network and its infinite-population compartmental limit.
	
	We first use our method to provide an alternative proof to the convergence of the discrete Bass model on a homogeneous complete network, and to show that the rate of convergence is $\frac{1}{M}$. We then use this method to prove the convergence of the discrete Bass model on the infinite circle, and to show that the rate of convergence is exponential in $M$. Finally, we use this method to prove the convergence of a discrete Bass model in a
	heterogeneous network. Specifically, we consider a heterogeneous population which consists of $K$ groups, each of which
	is homogeneous. We show that as $M\to\infty$, the fraction of adopters in the heterogeneous discrete model approaches that of the
	compartmental model. Then, we analyze the qualitative effect of heterogeneity in the {\em heterogeneous compartmental
		Bass model}. In particular, we show that when the heterogeneity is just in $\{p_j\}$, just in $\{q_j\}$, or when
	$\{p_j\}$ and $\{q_j\}$ are positively  monotonically related, then heterogeneity slows down the diffusion.
	
	The main contributions of this paper are:
	\begin{enumerate}
		\item
		A new method for proving the convergence and the rate of convergence of discrete Bass models as $M\to\infty$. This method is based on embedding a system of ODEs with a varying number of equations in an infinite system.
		\item
		A convergence proof for the discrete Bass model on the circle, and for a heterogeneous network with $K$ groups.
		\item
		Finding the rate of convergence of the discrete Bass model on a homogeneous complete network, a heterogeneous network with $K$ groups, and a homogeneous circle.
		\item
		An elementary proof that heterogeneity slows down the diffusion whenever the heterogeneity is just in $\{p_j\}$, just
		in $\{q_j\}$, or when $\{p_j\}$ and $\{q_j\}$ are positively  monotonically related.
	\end{enumerate}
	\section{Discrete Bass model}
	\label{sec:discrete_Bass}
	We begin by introducing the discrete Bass model for the diffusion of new products. A new product is introduced at time $t=0$ to a network with $M$ consumers. We denote by $X_j(t)$ the state of consumer $j$ at time $t$, so that 
	\begin{equation*}
		X_j(t)=\begin{cases}
			1, \qquad {\rm if}\ j\ {\rm adopts\ the\ product\ by\ time}\ t,\\
			0, \qquad {\rm otherwise.}
		\end{cases}
	\end{equation*} 
	Since all consumers are initially nonadopters,
	\begin{equation}
		\label{eq:general_initial}
		X_j(0)=0, \qquad j=1,\dots,M.
	\end{equation}
	Once a consumer adopts the product, she remains an adopter for all time. The underlying social network is represented by
	a weighted directed graph, where the weight of the edge from node $i$ to node $j$ is $q_{i,j}\geq 0$, and $q_{i,j}=0$ if
	there is no edge from $i$ to $j$. We scale the weights so that
	if $i$ already adopted the product and $q_{i,j}>0$, her rate of internal influence on consumer $j$ to adopt is
	$\frac{q_{i,j}}{{d_j(M)}}$, where $d_j(M)$ is the number of edges leading to node~$j$
	(the {\em indegree} of node $j$).
	This scaling ensures that the maximal internal influence
	\begin{equation}
		\label{eq:q_on_node}
		q_j:=\sum_{\substack{m=1\\m\neq j}}^{M}\frac{q_{m,j}}{{d_j(M)}}
	\end{equation}  
	on a nonadopter, which occurs when all her peers are adopters, will remain bounded as $M$ tends to infinity if the $q_{m,j}$ are bounded, and will
	equal their common value when all the $q_{m,j}$ corresponding to edges leading to $j$ are equal.
	In addition, consumer $j$ experiences an external influence to adopt, at the rate of $p_j> 0$. Hence, to first order in $\Delta t$,
	\begin{equation}
		\label{eq:discrete_model}
		{\rm Prob}(X_j(t+\Delta t)=1\ |\ {\bf X}(t))=\begin{cases}
			\hfill 1,\hfill &\qquad {\rm if}\ X_j(t)=1,\\
			\left(p_j+\sum\limits_{\substack{m=1 \\m\neq j}}^M\frac{q_{m,j}}{{d_j(M)}}X_{m}(t)\right)\Delta t, &\qquad {\rm if}\ X_j(t)=0,
		\end{cases}
	\end{equation}
	where ${\bf X}(t):=\left(X_1(t),\ldots,X_M(t)\right)$ is the state of the network at time $t$.
	\newsavebox{\smlmat}
	\savebox{\smlmat}{$\left(q_{i,j}\right)=
		\left(\begin{smallmatrix}
			0.1&0.05&0.01&0\\
			0.05&0.025&0.08&0.05\\
			0.01&0.02&0.03&0.04\\
			0.15&0.05&0.05&0.05
		\end{smallmatrix}\right)$ }
	The quantity of most interest is the expected fraction of adopters
	\begin{equation}
		\label{eq:expected_frac}
		f_{\rm discrete}(t;\{p_j\},\{q_{m,j}\},M)=\frac{1}{M}\E\left[N(t)\right],
	\end{equation}
	where $N(t):=\sum_{j=1}^{M}X_j(t)$ is the number of adopters at time $t$.
	
	Let $[S_{m_1},\dots,S_{m_n}]:={\rm Prob}(X_{m_i}=0,\ i=1,\dots,n)$ denote the probability that the $n$ nodes $\{m_1,\dots,m_n\}$ are nonadopters, where $1\leq n\leq M$, $m_i\in \{1,\dots,M\}$, and $m_i\neq m_j$ if $i\neq j$. These probabilities satisfy the master equations:
	\begin{lemma}[{\cite{fibich2021diffusion}}]
		\label{lem:mastereqs}
		The master equations for the discrete Bass model~$(\ref{eq:general_initial},\ref{eq:discrete_model})$ are
		\begin{subequations}
			\label{eqs:masterHeterofc}
			\begin{equation}
				\label{eq:masterHeterofc}
				\begin{aligned}
					\frac{d}{dt}[S_{m_1},\dots,S_{m_n}](t)=
					&-\left(\sum_{i=1}^{n}p_{m_i}+\sum_{j=n+1}^{M}\sum_{i=1}^{n}
					\frac{q_{l_j,m_i}}{{d_j(M)}}\right)[S_{m_1},\dots,S_{m_n}]
					\\&+\sum_{j=n+1}^{M}\left(\sum_{i=1}^{n} \frac{q_{l_j,m_i}}{{d_j(M)}}\right)[S_{m_1},\dots,S_{m_n},S_{l_j}],
				\end{aligned}
			\end{equation}
			for $\{m_1,\dots,m_n\} \subsetneq \{1,\dots,M\},$
			where $\{l_{n+1},\dots,l_M\}=\{1,\dots,M\}\backslash\{m_1,\dots,m_n\}$,  and
			\begin{equation}
				\label{eq:masterHeterofcM}
				\frac{d}{dt}[S_{1},\dots,S_{M}](t)=-\left(\sum_{i=1}^M p_i\right)[S_{1},\dots,S_{M}],
			\end{equation}
			subject to the initial conditions
			\begin{equation}
				\label{eq:masterHeterofcInitial}
				[S_{m_1},\dots,S_{m_n}](0)=1, \qquad\{m_1,\dots,m_n\}{\subseteq} \{1,\dots,M\}.
			\end{equation}
		\end{subequations}
	\end{lemma}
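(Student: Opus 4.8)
The plan is to derive~\eqref{eqs:masterHeterofc} directly from the infinitesimal transition rule~\eqref{eq:discrete_model} by the standard finite-difference argument for continuous-time Markov chains. Fix a proper subset $A=\{m_1,\dots,m_n\}$ and let $\mathbf{1}_A(t)$ be the indicator of the event that all nodes of $A$ are nonadopters at time~$t$, so that $[S_{m_1},\dots,S_{m_n}](t)=\E[\mathbf{1}_A(t)]$. Since adoption is irreversible, the event that $A$ is all-nonadopters at time $t+\Delta t$ is contained in the same event at time~$t$. I would therefore condition on the full state $\myvec{X}(t)$ and compute the conditional probability that none of the nodes of $A$ adopts during $[t,t+\Delta t]$.

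The key step is to expand this conditional probability to first order in $\Delta t$. By~\eqref{eq:discrete_model}, and because for a continuous-time Markov chain the probability that two or more nodes of $A$ adopt during $[t,t+\Delta t]$ is $O(\Delta t^2)$, on the event $\{\mathbf{1}_A(t)=1\}$ this conditional probability equals $1-\sum_{i=1}^{n}\bigl(p_{m_i}+\sum_{m\neq m_i}\tfrac{q_{m,m_i}}{d_{m_i}(M)}X_m(t)\bigr)\Delta t+O(\Delta t^2)$. Taking expectations and splitting off the external rates $p_{m_i}$ produces the factor $\E[\mathbf{1}_A(t)]=[S_{m_1},\dots,S_{m_n}]$ multiplying $\sum_{i=1}^n p_{m_i}$, together with mixed expectations $\E[\mathbf{1}_A(t)X_m(t)]$ coming from the internal rates.

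The technical heart is to rewrite these mixed expectations. For $m\in A$ one has $X_m(t)=0$ on $\{\mathbf{1}_A(t)=1\}$, so only the indices $m=l_j$ in the complement of $A$ contribute; for those, $\E[\mathbf{1}_A(t)X_{l_j}(t)]={\rm Prob}(A\text{ nonadopters},\ l_j\text{ adopter})=[S_{m_1},\dots,S_{m_n}]-[S_{m_1},\dots,S_{m_n},S_{l_j}]$ by inclusion--exclusion on the single node $l_j$. Inserting this identity, dividing by $\Delta t$, and letting $\Delta t\to0$ gives~\eqref{eq:masterHeterofc}. I expect the main obstacle to be the careful bookkeeping in this step, namely converting the sum over all neighbors of each $m_i$ into a sum over exterior nodes $l_j$ and applying the closure identity, together with the $O(\Delta t^2)$ control on multiple adoptions that legitimates passing from the difference quotient to the derivative.

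Finally,~\eqref{eq:masterHeterofcM} is the boundary case $A=\{1,\dots,M\}$: there are then no exterior nodes $l_j$, the entire internal-influence sum is empty, and only $-\bigl(\sum_{i=1}^{M}p_i\bigr)[S_1,\dots,S_M]$ survives. The initial conditions~\eqref{eq:masterHeterofcInitial} follow immediately from~\eqref{eq:general_initial}, since every consumer is a nonadopter at $t=0$, whence $[S_{m_1},\dots,S_{m_n}](0)={\rm Prob}(\text{all nodes of }A\text{ nonadopters at }0)=1$.
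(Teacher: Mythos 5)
Your derivation is correct and is the standard argument; the paper itself does not prove Lemma~\ref{lem:mastereqs} but imports it from \cite{fibich2021diffusion}, where the proof proceeds exactly as you outline --- condition on $\mathbf{X}(t)$, use irreversibility and the $O(\Delta t^2)$ bound on multiple transitions in $[t,t+\Delta t]$, and eliminate the mixed expectations $\E[\mathbf{1}_A(t)X_{l_j}(t)]$ via the identity $[S_{m_1},\dots,S_{m_n}]-[S_{m_1},\dots,S_{m_n},S_{l_j}]$. One cosmetic remark: the denominator $d_{m_i}(M)$ in your expansion (the indegree of the \emph{influenced} node) is the one dictated by~\eqref{eq:discrete_model}, and the symbol $d_j(M)$ appearing in~\eqref{eq:masterHeterofc} should be read in that same way.
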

	In what follows, we will use equations~\eqref{eqs:masterHeterofc} to analyze the limit of $f_{\rm discrete}$ as $M\to\infty$. Indeed, since $\E\left[X_j(t)\right]=1-[S_j](t)$, then
	\begin{equation}
		\label{eq:f_discrete}
		f_{\rm discrete}=1-\frac{1}{M}\sum_{j=1}^{M}[S_j].
	\end{equation}
	Therefore,
	\begin{equation*}
		\lim\limits_{M\to\infty}f_{\rm discrete}(\cdot,M)=1-\lim\limits_{M\to\infty}\frac{1}{M}\sum_{j=1}^{M}[S_j](\cdot,M).
	\end{equation*}
	
	\subsection{Relation between discrete and compartmental Bass models}
	\label{subsec:compartmental_relation}
	From a modeling perspective, the discrete Bass model is more fundamental than the compartmental model. The latter model, however, is much easier to analyze. 
	Indeed, the homogeneous compartmental Bass model~\eqref{eq:Bass_intro} can be rewritten as
	\begin{equation}
		\label{eq:homogeneous_Bass}
		\frac{d}{dt}f(t)=(1-f)(p+qf),\qquad f(0)=0,
	\end{equation}
	where $f:=\frac{n}{M}$ is the fraction of adopters. This equation can be easily solved, yielding the Bass formula~\cite{Bass-69}
	\begin{equation}
		\label{eq:Bass_sol}
		f_{\rm Bass}(t;p,q)=\frac{1-e^{-(p+q)t}}{1+\frac{q}{p}e^{-(p+q)t}}.
	\end{equation}
	The corresponding discrete network is complete and homogeneous, i.e.
	\begin{equation}
		\label{eq:hom_conditions}
		p_j\equiv p,\qquad q_{k,j}\equiv q,\qquad d_j(M)\equiv M-1,\qquad k,j=1,\dots,M, \qquad k\neq j.
	\end{equation} 
	In that case, \eqref{eq:discrete_model}~reads
	\begin{equation}
		\label{eq:general_homogeneous}
		{\rm Prob}(X_j(t+\Delta t)=1\ |\ {\bf X}(t))=\begin{cases}
			\hfill 1, \hfill & \qquad {\rm if}\ X_j(t)=1,\\
			\left(p+\frac{q}{M-1}N(t)\right)\Delta t, & \qquad {\rm if}\ X_j(t)=0.
		\end{cases}
	\end{equation}
	The relation between the discrete Bass model on a homogeneous network and the compartmental Bass model was established by Niu:
	\begin{theorem} [{\cite{Niu-02}}]
		\label{thm:niu}
		Let $p,q>0$. Then the expected fraction of adopters in the discrete Bass model~$(\ref{eq:general_initial},\ref{eq:general_homogeneous})$ on a homogeneous complete network approaches that of the homogeneous compartmental Bass model~\eqref{eq:homogeneous_Bass} as $M\to\infty$, i.e.,
		\begin{equation}
			\label{eq:f_Bass-Niu}
			\lim\limits_{M\to\infty}f^{\rm complete}_{\rm discrete}(t;p,q,M)=f_{\rm Bass}(t;p,q).
		\end{equation}
	\end{theorem}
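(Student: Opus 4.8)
The plan is to use the symmetry of the homogeneous complete network to collapse the master equations~\eqref{eqs:masterHeterofc} into a closed \emph{finite} system, embed that system into an infinite one, pass to the limit, and close the limit system by an explicit ansatz that reproduces the compartmental Bass model. First I would note that under the homogeneity conditions~\eqref{eq:hom_conditions} the probability $[S_{m_1},\dots,S_{m_n}]$ is invariant under relabeling of the nodes, so it depends only on the number $n$ of nonadopting nodes; denote this common value $[S^n](t)$. Substituting $p_{m_i}\equiv p$, $q_{l_j,m_i}\equiv q$ and $d_j(M)\equiv M-1$ into~\eqref{eq:masterHeterofc} and counting the $M-n$ choices of the extra node, the full hierarchy reduces to the closed, exact finite system
\begin{equation*}
\frac{d}{dt}[S^n]=-\bigl(np+a^M_n\bigr)[S^n]+a^M_n\,[S^{n+1}],\qquad a^M_n:=\frac{n(M-n)q}{M-1},
\end{equation*}
for $n=1,\dots,M$ with $[S^n](0)=1$; at $n=M$ one has $a^M_M=0$, recovering~\eqref{eq:masterHeterofcM}. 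By symmetry and~\eqref{eq:f_discrete}, $f^{\rm complete}_{\rm discrete}=1-[S^1]$, so it suffices to determine $\lim_{M\to\infty}[S^1]$.

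Next I would embed this finite system into an infinite one by extending it with $a^M_n:=0$ and zero initial data for $n>M$ (so that $[S^n]\equiv0$ there), and compare it to the \emph{limit} system obtained by replacing each $a^M_n$ with its pointwise limit $a_n:=nq$, namely $\frac{d}{dt}s_n=-n(p+q)s_n+nq\,s_{n+1}$ with $s_n(0)=1$. The ansatz now closes this limit system exactly: substituting $s_n(t)=\bigl(g(t)\bigr)^n$ collapses the entire infinite hierarchy to the single scalar equation $g'=-(p+q)g+qg^2$, $g(0)=1$. Setting $f:=1-g$ turns this into $f'=(1-f)(p+qf)$, $f(0)=0$, which is precisely the compartmental Bass model~\eqref{eq:homogeneous_Bass}; hence $g=1-f_{\rm Bass}$ and, in particular, $s_1=1-f_{\rm Bass}(\cdot\,;p,q)$.

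The crux, and the step I expect to be the main obstacle, is proving $s^M_n\to s_n$. The error $e^M_n:=[S^n]-s_n$ (with $[S^n]\equiv0$ for $n>M$) satisfies
\begin{equation*}
\frac{d}{dt}e^M_n=-\bigl(np+a^M_n\bigr)e^M_n+a^M_n\,e^M_{n+1}+\bigl(a^M_n-a_n\bigr)\bigl(s_{n+1}-s_n\bigr),\qquad e^M_n(0)=0,
\end{equation*}
an infinite, upper-triangularly coupled system whose diagonal and coupling coefficients both grow linearly in $n$, while the number of nontrivial components grows with $M$; a naive Gronwall estimate therefore fails. The saving features are that the forcing is small, since $a^M_n-a_n=\frac{n(1-n)q}{M-1}=O(n^2/M)$ and $s_{n+1}-s_n=g^n(g-1)$ decays geometrically in $n$ (as $0<g\le1$), so each forcing term is $O(1/M)$ and is summable in $n$. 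I would therefore work in a weighted supremum norm $\|e\|:=\sup_{n}|e_n|/\rho^{\,n}$ with $1<\rho\le 1+\tfrac{p}{q}$, chosen so that the damping $-(np+a^M_n)$ dominates the transport term $a^M_n e^M_{n+1}$; since $g<\rho$, the weight also renders the forcing uniformly $O(1/M)$ and forces the supremum to be attained at a finite index, removing the difficulty of differentiating an unbounded sup. Gronwall's inequality then yields $\sup_{t\in[0,T]}\|e^M(t)\|=O(1/M)$, whence $\sup_{t\in[0,T]}|e^M_1(t)|=O(1/M)$. Combining this with $f^{\rm complete}_{\rm discrete}=1-[S^1]$ and $s_1=1-f_{\rm Bass}$ gives the convergence~\eqref{eq:f_Bass-Niu}, and in fact establishes the rate $1/M$.
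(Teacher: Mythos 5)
Your proposal is essentially the paper's own proof: the same symmetry reduction to the closed system~\eqref{eqs:masterfc}, the same embedding of the finite system into an infinite one, the same ansatz $s_n=g^n$ that closes the limit system to the Bass ODE so that $s_1=1-f_{\rm Bass}$, and the same weighted supremum norm with weight constrained by $\rho\le 1+\tfrac{p}{q}$ (the paper's $e^{\epsilon n}$ with $\epsilon<\widetilde\epsilon=\ln(1+\tfrac pq)$), yielding the $O(1/M)$ bound via a contraction/Gronwall estimate. The one substantive deviation is your zero-extension for $n>M$: there $e^M_n(0)=-1$ rather than $0$, and the forcing $(a^M_n-a_n)(s_{n+1}-s_n)=nq\,g^n(1-g)$ is not termwise $O(1/M)$; this does not break the argument, since those components are decoupled from $n\le M$ and are exponentially suppressed by the weight, but the paper sidesteps the issue entirely by extending with $q^{(M)}_n=q$ and unit initial data, so that the tail coincides with the limit solution and the error vanishes identically there.
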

	As far as we know, Theorem~\ref{thm:niu} is the only previous rigorous proof of convergence of any discrete Bass model as $M\to\infty$.
	
	\section{Homogeneous complete network}
	\label{sec:hom_comp}
	In this section we introduce a novel method for proving Theorem~\ref{thm:niu}. This method also provides the {\em rate of convergence}, and can be extended to other types of networks.
	
	\begin{theorem}
		\label{thm:Niu}
		Assume the conditions of Theorem~\ref{thm:niu}. Then the limit~\eqref{eq:f_Bass-Niu} is uniform in~$t$. Moreover, the rate of convergence is $\frac1M$, i.e.,
		\begin{equation}
			\label{eq:f_Bass-rate-of-convergence-hom}
			f^{\rm complete}_{\rm discrete}(t;p,q,M) - f_{\rm Bass}(t;p,q) = O\left(\frac1M\right), \qquad M \to \infty.
		\end{equation}
	\end{theorem}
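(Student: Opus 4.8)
The plan is to follow the three-stage strategy announced in the abstract: reduce the master equations to a small closed chain, compare that chain against an explicit infinite limit system, and close the limit system by an ansatz. I will organize the argument around the single scalar quantity $s_1$, since by~\eqref{eq:f_discrete} one has $f^{\rm complete}_{\rm discrete}=1-s_1$.

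\textbf{Step 1 (symmetry reduction).} Under the homogeneity and completeness conditions~\eqref{eq:hom_conditions}, the probability $[S_{m_1},\dots,S_{m_n}]$ is invariant under permutations of the nodes, so it depends only on the number $n$ of nonadopting nodes; denote this common value by $s_n(t)$. Substituting $p_{m_i}\equiv p$, $q_{l_j,m_i}\equiv q$, and $d_j(M)\equiv M-1$ into the master equations~\eqref{eqs:masterHeterofc} collapses them into the closed finite chain
\begin{equation}
	s_n'(t)=-np\,s_n-c_n^M\,(s_n-s_{n+1}),\qquad c_n^M\eqdef\frac{nq(M-n)}{M-1},\qquad s_n(0)=1,
\end{equation}
for $n=1,\dots,M$. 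Since $c_M^M=0$, the equation for $s_M$ decouples from any hypothetical $s_{M+1}$, so the chain is genuinely finite, closed, and exact.

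\textbf{Step 2 (limit system and ansatz).} Letting $M\to\infty$ replaces $c_n^M$ by $c_n^\infty\eqdef nq$ and yields the infinite limit system $\sigma_n'=-np\,\sigma_n-nq(\sigma_n-\sigma_{n+1})$, $\sigma_n(0)=1$, $n\ge1$. I would then verify directly that the ansatz $\sigma_n(t)\eqdef\bigl(1-f_{\rm Bass}(t)\bigr)^n$ solves it: substituting and dividing by $n(1-f_{\rm Bass})^{n-1}$ collapses the whole family to the single scalar equation $\tfrac{d}{dt}(1-f)=-p(1-f)-q(1-f)f$, which is exactly the compartmental Bass equation~\eqref{eq:homogeneous_Bass}. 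This is the exact closure of the limit system, and in particular $\sigma_1=1-f_{\rm Bass}$.

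\textbf{Step 3 (rate of convergence).} Set $e_n\eqdef s_n-\sigma_n$, so that $e_n(0)=0$ for all $1\le n\le M$. Subtracting the two systems gives
\begin{equation}
	e_n'=-np\,e_n-c_n^M(e_n-e_{n+1})+F_n,\qquad F_n\eqdef-(c_n^M-nq)(\sigma_n-\sigma_{n+1}),
\end{equation}
where $c_n^M-nq=-\tfrac{nq(n-1)}{M-1}$ and $\sigma_n-\sigma_{n+1}=(1-f_{\rm Bass})^n f_{\rm Bass}$, so $|F_n|\le \tfrac{q}{M-1}\,n^2(1-f_{\rm Bass})^n$ for $n<M$, while at the endpoint $F_M=Mq(1-f_{\rm Bass})^M f_{\rm Bass}$. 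I would estimate $e$ in the weighted norm $\|e\|_w\eqdef\sum_{n=1}^M w_n|e_n|$ with geometric weights $w_n\eqdef\theta^n$, $\theta\eqdef\tfrac{q}{p+q}\in(0,1)$. Each row of the chain has negative diagonal $-(np+c_n^M)$ and nonnegative off-diagonal $c_n^M$ with row sum $-np\le0$, so the chain is sub-stochastic, and the weights are chosen so that, after reindexing the coupling, the net coefficient of $|e_n|$ is at most $-\delta w_n$; concretely one checks $w_{n-1}c_{n-1}^M\le w_n(np+c_n^M-\delta)$ for a fixed $\delta>0$ and all $1\le n\le M$ once $M$ is large. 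This produces $\tfrac{d}{dt}\|e\|_w\le-\delta\|e\|_w+G(t)$ with $G(t)=\sum_{n=1}^M w_n|F_n|$, whose bulk is bounded by $\tfrac{q}{M-1}\sum_{n\ge1}n^2\theta^n=O(1/M)$ uniformly in $t$, while the lone boundary term $w_M|F_M|=O(\theta^M)$ is negligible. Since $\|e(0)\|_w=0$, Gronwall's inequality gives $\|e(t)\|_w\le \delta^{-1}\sup_s G(s)=O(1/M)$ uniformly in $t$, hence $|e_1(t)|\le w_1^{-1}\|e(t)\|_w=O(1/M)$. Combining this with Steps~1--2 yields $f^{\rm complete}_{\rm discrete}-f_{\rm Bass}=\sigma_1-s_1=-e_1=O(1/M)$, uniformly in $t$, which is exactly~\eqref{eq:f_Bass-rate-of-convergence-hom}.

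\textbf{Main obstacle.} The algebra of Steps~1--2 is routine; the crux is the uniform-in-$t$ Gronwall estimate in Step~3. The transport-type coupling $e_n\leftrightarrow e_{n+1}$ carries error from large $n$ (where the coefficient mismatch and the boundary discrepancy live) down to $n=1$, and a naive supremum norm fails because $\sup_n|F_n|$ is not uniformly $O(1/M)$ as $t\to0$. The geometric weights are engineered to do three things at once: make the chain contractive despite the transport, sum the $n^2$-growing forcing to an $O(1/M)$ constant, and suppress the $O(1)$ boundary forcing at $n=M$ by the factor $\theta^M$. The one computation that must be carried out with care is the dissipation inequality $w_{n-1}c_{n-1}^M\le w_n(np+c_n^M-\delta)$ for every $n\le M$, including $n$ comparable to $M$, where $c_n^M$ degenerates toward $0$.
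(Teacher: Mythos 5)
Your proposal is correct, and Steps 1--2 coincide with the paper's argument: the same symmetry reduction to the chain \eqref{eqs:masterfc}, the same formal limit system \eqref{eq:master_inf-hom}, and the same product ansatz \eqref{eq:inf_sol_hom}. Where you genuinely diverge is in the convergence estimate. The paper embeds the $M$-component chain into an infinite system, writes the error as a Duhamel integral equation, and closes the estimate in the weighted sup norm $\sup_n e^{-\epsilon n}|v_n|$, using that $\theta(\epsilon)=qe^{\epsilon}/(p+q)<1$ for $\epsilon<\widetilde\epsilon=\ln(1+p/q)$. You instead stay with the finite chain (no embedding is needed, since $c_M^M=0$ closes it), compare directly against the explicit solution $\sigma_n=(1-f_{\rm Bass})^n$, and run a differential inequality in a weighted $\ell^1$ norm with geometric weights $\theta^n$, $\theta=q/(p+q)$ --- which is exactly the endpoint weight $e^{-\widetilde\epsilon n}$ that the paper's contraction argument must exclude; you can afford it because your dissipation comes from the $-np$ diagonal term rather than from a strict contraction factor. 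I checked your key inequality $w_{n-1}c^M_{n-1}\le w_n(np+c^M_n-\delta)$: with $\alpha_n=\tfrac{M-n}{M-1}$ it reduces to $\tfrac{(n-1)(np-(p+q))}{(M-1)(p+q)}+\alpha_n\ge\tfrac{\delta}{p+q}$, which holds for large $M$ with, say, $\delta=(p+q)/4$, by splitting into $n\le M/2$ (where $\alpha_n\ge\tfrac12$ and the first term is $O(1/M)$ in absolute value, since either $np\ge p+q$ or $n<1+q/p$ is bounded) and $n>M/2$ (where the first term grows like $M$). The remaining details --- the Dini-derivative justification of $\tfrac{d}{dt}\|e\|_w\le-\delta\|e\|_w+G$, the bound $G=O(1/M)$ uniformly in $t$ from $\sum n^2\theta^n<\infty$ plus the exponentially small boundary term $Mq\theta^M$, and Gronwall --- all go through as you describe. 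What each approach buys: the paper's integral-equation/sup-norm machinery transfers essentially verbatim to the circle and to the $K$-group heterogeneous network, which is the point of the paper; your $\ell^1$/Gronwall argument is more self-contained for the homogeneous complete case and sidesteps both the infinite embedding and the existence discussion for the infinite systems, at the cost of a weight inequality that would have to be re-derived for each network.
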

	\begin{proof}
		Our starting point are the master equations~\eqref{eqs:masterHeterofc}.
		When the network is homogeneous and complete, see~\eqref{eq:hom_conditions}, then by symmetry, $[S_{m_1},\dots,S_{m_n}]\\=[S_{k_1},\dots,S_{k_n}]$ for any $\{m_1,\dots,m_n\}$ and $\{k_1,\dots,k_n\}\subset\{1,\dots,M\}$. Hence, we can denote by $[S^n](t)$ the probability that any arbitrary subset of $n$ nodes are nonadopters at time~$t$. Using this symmetry and~\eqref{eq:hom_conditions}, the master equations~\eqref{eqs:masterHeterofc} reduce to
		\begin{subequations}
			\label{eqs:masterfc}
			\begin{equation}
				\label{eq:masterfc}
				\frac{d}{dt}[S^n](t;M)=-n\left(p+\frac{M-n}{M-1}q\right)[S^n]+n\frac{M-n}{M-1}q[S^{n+1}], \qquad n=1,\dots,M-1,
			\end{equation}
			\begin{equation}
				\label{eq:masterfcM}
				\frac{d}{dt}[S^M](t;M)=-Mp[S^M],
			\end{equation}
			subject to the initial conditions
			\begin{equation}
				\label{eq:masterfcInitial}
				[S^n](0;M)=1, \qquad n=1,\dots,M.
			\end{equation}
		\end{subequations}
		Moreover, by~\eqref{eq:f_discrete}, 
		\begin{equation}
			\label{eq:f_disc_simple}
			f^{\rm complete}_{\rm discrete}=1-[S], \qquad [S]:=[S^1].
		\end{equation}
		If we formally fix~$n$ and let $M\to\infty$ in~\eqref{eqs:masterfc}, we get that  
		\begin{equation}
			\label{eq:master_inf-hom}
			\frac{d}{dt}[S^n_{\infty}](t)=-n(p+q)[S^n_{\infty}]+nq[S^{n+1}_{\infty}], \qquad [S^n_{\infty}](0) = 1, \qquad n=1,2,\dots.
		\end{equation}
		This does not immediately imply that $\lim_{M \to \infty} [S^n] =  [S^n_{\infty}]$,
		since the number of ODEs in~\eqref{eqs:masterfc} increases with~$M$, and becomes infinite in the limit. In Lemma~\ref{lem:Steve-convergence-homog} below, however,
		we will prove that for any $n \ge 1$,
		\begin{subequations}
			\label{eqs:s^M->S}
			\begin{equation}
				\label{eq:S^M->S-complete-hom}
				\lim_{M \to \infty} [S^n](t;M)  =  [S^n_{\infty}](t), \qquad \mbox{uniformly in $t$}.
			\end{equation}
			Moreover, 
			\begin{equation}
				\label{eq:S^M->S-complete-rate}
				[S^n](t;M)  - [S^n_{\infty}](t)  = O\left(\frac1M\right), \qquad M \to \infty.
			\end{equation}
		\end{subequations}
		Therefore,  
		we can proceed to solve the infinite system~\eqref{eq:master_inf-hom}.
		
		To do that, we note that substituting the ansatz 
		\begin{equation}
			\label{eq:inf_sol_hom}
			[S^n_{\infty}]=  [S_{\infty}]^n,\qquad n=1,2,\dots
		\end{equation}
		into \eqref{eq:master_inf-hom} transforms that system into
		\begin{equation*}
			n[S_{\infty}]^{n-1}\frac{d}{dt}[S_{\infty}](t)=-n(p+q)[S_{\infty}]^n+nq[S_{\infty}]^{n+1},
			\qquad [S_{\infty}](0)=1.
		\end{equation*}
		Dividing by $n[S_{\infty}]^{n-1}$, we find that the infinite system reduces to the single ODE and initial condition 
		\begin{equation*}
			\frac{d}{dt}[S_{\infty}]=-\left(p+q\right)[S_{\infty}]+  q[S_{\infty}]^2
			=-[S_{\infty}] (p+q(1-[S_{\infty}]), \qquad [S_{\infty}](0)=1.
		\end{equation*}
		Therefore, by~\eqref{eq:homogeneous_Bass},
		\begin{equation}
			\label{eq:s_infty_hom}
			[S_{\infty}] = 1-f_{\rm Bass}.
		\end{equation}
		The results follow
		from~\eqref{eq:f_disc_simple},~\eqref{eq:inf_sol_hom},~\eqref{eq:s_infty_hom}, and~\eqref{eq:S^M->S-complete-hom} and~\eqref{eq:S^M->S-complete-rate} with $n=1$.
	\end{proof}
	The $O\left(\frac1M\right)$ rate of convergence which is predicted in Theorem~\ref{thm:Niu} is illustrated numerically in Figure~\ref{fig:convergence_hom}, where $f_{\rm Bass}-f^{\rm complete}_{\rm discrete}\approx\frac{C}{M^{0.99}}$. Here, $f^{\rm complete}_{\rm discrete}$ was calculated from the average of $10^6$ simulations of~$(\ref{eq:general_initial},\ref{eq:general_homogeneous})$.
	
	\begin{figure}[ht!]
		\begin{center}
			\scalebox{1}{\includegraphics{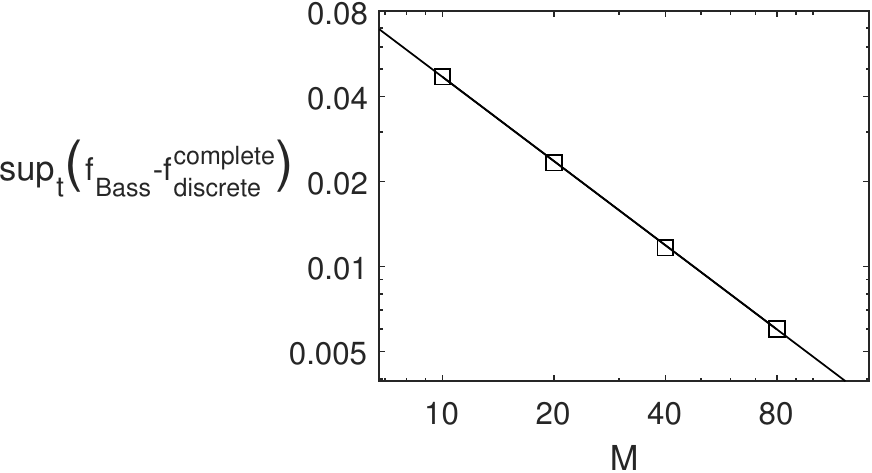}}
			\caption{Log-log plot of $\sup_{t}\left(f_{\rm Bass}-f_{\rm discrete}^{\rm complete}\right)$ as a function of $M$. The fitted solid line is $\log y=-0.99\log M-0.78$. Here, $p=0.02$ and $q=0.1$.}
			\label{fig:convergence_hom}
		\end{center}
	\end{figure}
	
	\subsection{Convergence and rate of convergence}
	\label{sec:proof-Niu}
	The proof of Theorem~\ref{thm:Niu} makes use of
	\begin{lemma}
		\label{lem:Steve-convergence-homog}
		For any $n \in \mathbb{N}$ and $p,q>0$, the solution $[S^n](t;p,q,M)$  of~\eqref{eqs:masterfc}
		converges, uniformly in~$t$, to the solution $[S^n_\infty](t;p,q)$   of~\eqref{eq:master_inf-hom} as $M \to \infty$. Moreover, $[S^n](t;M)  - [S^n_{\infty}](t)  = O\left(\frac1M\right)$ as $M\to \infty$.
	\end{lemma}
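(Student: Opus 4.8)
The plan is to control the whole error vector $E^n(t):=[S^n](t;M)-[S^n_\infty](t)$ at once, in a geometrically weighted $\ell^1$ norm, and to deduce the pointwise rate from a single Gronwall estimate. First I would subtract \eqref{eq:master_inf-hom} from \eqref{eqs:masterfc}. Writing $\tfrac{M-n}{M-1}=1-\tfrac{n-1}{M-1}$, the terms that are common to both systems cancel and one is left with a \emph{closed, finite} linear system for $E^1,\dots,E^M$ with $E^n(0)=0$: for $1\le n\le M-1$ one gets $\tfrac{d}{dt}E^n=-n(p+q)E^n+nq\,E^{n+1}+g^n$ with source $g^n=\tfrac{n(n-1)}{M-1}\,q\,D^n$, $D^n:=[S^n](M)-[S^{n+1}](M)$, while at the terminal index the finite equation \eqref{eq:masterfcM} carries no $[S^{M+1}]$ term, so $\tfrac{d}{dt}E^M=-Mp\,E^M+g^M$ with $g^M=Mq\big([S^M_\infty]-[S^{M+1}_\infty]\big)$. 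The two probabilistic facts I would exploit are $0\le[S^{n+1}]\le[S^n]\le1$ (more nonadoption constraints can only lower the probability), which makes $D^n\ge0$ and gives the telescoping bound $\sum_{n=1}^{M-1}D^n=[S^1](M)-[S^M](M)\le 1$.

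Because the coefficients grow linearly in $n$, the operator is unbounded and a plain ($c_n\equiv1$) norm is useless: the upward coupling $nq\,E^{n+1}$ combined with the weakened damping $-Mp\,E^M$ of the top equation destroys dissipativity as soon as $q>p$, and the factor $n(n-1)$ in $g^n$, summed over $n\le M$, would produce an $M^3$ that swamps the $1/(M-1)$ prefactor. I would resolve both issues simultaneously with the weight $V(t):=\sum_{n=1}^{M}c_n|E^n(t)|$, where $c_n=r^n$ for $n<M$ and $c_M=\tfrac qp\,r^{M-1}$, with $r\in\big(\tfrac{q}{p+q},1\big)$ fixed. Using the upper Dini derivative $\tfrac{d}{dt}|E^n|\le \mathrm{sgn}(E^n)\dot E^n$ and reindexing the coupling terms, the net coefficient of $|E^n|$ becomes $r^{n-1}\big[(n-1)q-rn(p+q)\big]$ for $n<M$ and $r^{M-1}\big[(M-1)q-\tfrac qp Mp\big]=-q\,r^{M-1}<0$ at the top; the condition $r>\tfrac{q}{p+q}$ is exactly what forces every one of these to be $\le -p\,c_n$, so that $D^+V\le -pV+G$ with $G(t):=\sum_{n=1}^{M-1}c_n|g^n|+c_M|g^M|$.

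Next I would bound the source. For the bulk part, since $r^n n(n-1)\le C:=\sup_{m\ge1}r^m m(m-1)<\infty$ and $D^n\ge0$, the telescoping sum gives, uniformly in $t$ and $M$, $\sum_{n=1}^{M-1}c_n|g^n|=\tfrac{q}{M-1}\sum_{n=1}^{M-1}r^n n(n-1)D^n\le \tfrac{qC}{M-1}\sum_{n=1}^{M-1}D^n\le \tfrac{qC}{M-1}=O(1/M)$. For the terminal part I would use the explicit limit solution $[S^n_\infty]=[S_\infty]^n$ from \eqref{eq:inf_sol_hom}--\eqref{eq:s_infty_hom}, so $g^M=Mq\,[S_\infty]^M(1-[S_\infty])\ge0$; substituting $u=[S_\infty](t)$ via the single ODE for $[S_\infty]$ yields $\int_0^\infty g^M\,dt=\int_0^1 \tfrac{Mq\,u^{M-1}(1-u)}{p+q(1-u)}\,du\le \tfrac qp\int_0^1 Mu^{M-1}(1-u)\,du=\tfrac{q}{p(M+1)}=O(1/M)$, and $c_M=\tfrac qp r^{M-1}$ is bounded (indeed exponentially small).

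Finally, integrating $D^+V\le -pV+G$ from $V(0)=0$ gives $V(t)\le\int_0^t e^{-p(t-s)}G(s)\,ds$; the bulk part is $\le \tfrac1p\cdot\tfrac{qC}{M-1}$ and the terminal part is $\le c_M\int_0^\infty g^M\,ds$, so $V(t)=O(1/M)$ uniformly in $t$. Since $|E^n(t)|\le c_n^{-1}V(t)=r^{-n}V(t)$, this proves $[S^n](t;M)\to[S^n_\infty](t)$ uniformly in $t$ for every fixed $n$, at rate $O(1/M)$, as claimed. I expect the genuinely delicate point to be the choice of weight: it must decay geometrically in $n$ to tame both the $n(n-1)$ growth of $g^n$ and the upward coupling, yet stay bounded below on each fixed index so that control of $V$ controls $E^1$ (and each $E^n$), and it must be inflated at the terminal index to compensate for the missing $-Mq$ damping in \eqref{eq:masterfcM}; producing one sequence $\{c_n\}$ that does all three at once is the crux, and everything else is routine.
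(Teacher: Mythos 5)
Your proof is correct, and it takes a genuinely different route from the paper's. The paper first embeds the finite system~\eqref{eqs:masterfc} into an infinite system (by setting $q^{(M)}_n=q$ for $n>M$, so that the tail components coincide with the limit solution and no terminal index needs special treatment), rewrites the difference as an integral equation via an integrating factor, and closes a contraction-type estimate $\vertiii{\Delta}_\epsilon\le\theta(\epsilon)\vertiii{\Delta}_\epsilon+O(1/M)$ in the weighted sup norm $\sup_{n,t}e^{-\epsilon n}|\cdot|$, with the perturbation term bounded crudely by $2\vertiii{\mathbf u^{(M)}}_0\le 2$. You instead stay finite-dimensional, use the dual object --- a geometrically weighted $\ell^1$ Lyapunov functional with an inflated terminal weight --- and close a differential inequality $D^+V\le -pV+G$; the source is controlled by the monotonicity $[S^{n+1}]\le[S^n]$ with its telescoping sum, and the terminal source via the explicit limit solution $[S^n_\infty]=[S_\infty]^n$. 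The crux is the same in both arguments: your condition $r>\tfrac{q}{p+q}$ is precisely the paper's $\epsilon<\widetilde\epsilon=\ln(1+\tfrac pq)$ under $r=e^{-\epsilon}$, i.e.\ $\theta(\epsilon)<1$. What your version buys is a self-contained finite-dimensional argument with an explicit exponential decay rate $-p$ for the error functional and no need to discuss existence of solutions of an infinite embedded system; what it costs is the bookkeeping at $n=M$ and the extra probabilistic input $D^n\ge0$ (though the latter is dispensable: since $\sum_n r^n n(n-1)<\infty$, the bound $D^n\le 1$ already gives the $O(1/M)$ bulk estimate, and $c_M|g^M|\le \tfrac{q^2}{p}Mr^{M-1}$ is exponentially small without the time-integral computation). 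Two cosmetic remarks: the requirement is really $r\ge\tfrac{q}{p+q}$ (your inequality $(n-1)q\le r\,(n(p+q)-p)$ holds with slack already at $r=\tfrac{q}{p+q}$), so ``exactly'' overstates the threshold; and you should note explicitly that the lemma's $[S^n_\infty]$ is the ansatz solution~\eqref{eq:inf_sol_hom}, since your argument (like the paper's) proves convergence to that particular solution of the infinite system rather than invoking its uniqueness.
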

	\noindent This lemma will be proved in Corollary~\ref{cor:Niu-hom} below. To simplify the notations, let
	\begin{subequations}
		\label{eq:u^M_n=[S^n](M)+infinity-hom}
		\begin{align}
			\label{eq:u^M_n=[S^n](M)-hom}
			&u^{(M)}_n(t):= [S^n](t;M), 
			\qquad 
			q^{(M)}_n:= \frac{M-n}{M-1}q, \qquad
			n=1, \dots, M,
			\\
			&    u^{\infty}_n(t):= [S^n_\infty](t), 
			\qquad n=1, 2, \dots,
		\end{align}
	\end{subequations}
	Then we can rewrite the systems~\eqref{eqs:masterfc} and~\eqref{eq:master_inf-hom} as 
	\begin{equation}
		\label{eq:odesys-M-hom}
		\frac{d}{dt}u^{(M)}_n(t)=-n\left(p+q^{(M)}_n\right)u^{(M)}_n+nq^{(M)}_nu^{(M)}_{n+1}.
		\qquad u^{(M)}_n(0)=1, \qquad 
		n=1, \dots, M,
	\end{equation}
	and 
	\begin{equation}
		\label{eq:limsys-hom}
		\frac{d}{dt}{u}_n^{\infty}(t)=-n(p+q) u_n^{\infty}+nq u_{n+1}^{\infty}, 
		\qquad u_n^{\infty}(0)=1,
		\qquad n=1,2, \dots,  
	\end{equation}
	respectively. 
		Note that ODE~\eqref{eq:odesys-M-hom} for~$\frac{d}{dt} u^{(M)}_M$ does not involve the non-existent variable $u^{(M)}_{M+1}$, since 
		\begin{equation}
			\label{eq:M+1term}
			q_M^{(M)}=0.
		\end{equation}
		
		For any fixed~$n$,
		\begin{equation}
			\label{eq:qnconv-hom}
			\lim_{M\to\infty} q^{(M)}_n=q.
		\end{equation}
		Hence, it is reasonable to expect that as $M\to\infty$, solutions of~\eqref{eq:odesys-M-hom} converge to solutions of the limiting infinite system~\eqref{eq:limsys-hom}.
		We cannot, however, prove this by applying the standard theorems for continuous dependence of ODE solutions on parameters, since the number of equations increases with~$M$ and becomes infinite in the limit, and because of the 
		presence of the unbounded factor~$n$ on the right-hand-sides 
		of~\eqref{eq:odesys-M-hom} and~\eqref{eq:limsys-hom}.

		It is convenient for the analysis to have an infinite number of
		ODEs even for a finite~$M$, because then $\{u_n^{(M)}\}$ and $\{u_n^{\infty}\}$ belong to the same space. Therefore, 
		we embed the finite system~\eqref{eq:odesys-M-hom} into the infinite system 
		\begin{subequations}
			\label{eq:odesys-M-embed-infinity-hom}
			\begin{equation}
				\label{eq:odesys-M-embed-infinity-ODE-hom}
				\frac{d}{dt}u^{(M)}_n(t)=-n\left(p+q^{(M)}_n\right)u^{(M)}_n+nq^{(M)}_nu^{(M)}_{n+1}, 
				\qquad u^{(M)}_n(0)=1, \qquad 
				n=1,2, \dots,
			\end{equation}
			where
			\begin{equation}
				\label{eq:odesys-q_n-M=infinity-hom}
				q^{(M)}_n=
				\begin{cases}
					\frac{M-n}{M-1}q, & \quad n=1, \dots, M, \\ 
					q, & \quad n=M+1, M+2,\dots 
				\end{cases}
			\end{equation}  
		\end{subequations}
		Condition~\eqref{eq:M+1term} ensures that the first $M$~components $\{u^{(M)}_n\}_{n=1}^M$ of the solution of~\eqref{eq:odesys-M-embed-infinity-hom} are also the solution  
		of~\eqref{eq:odesys-M-hom}. In addition, $u^{(M)}_n = u_n^{\infty}$ for $n=M+1, M+2, \dots$, since equations~\eqref{eq:limsys-hom} and~\eqref{eq:odesys-M-embed-infinity-hom} for these components
		are identical, and are decoupled from the first $M$~equations.
		Hence, solutions of the finite system~\eqref{eq:odesys-M-hom} converge to solutions of the
		limiting infinite system~\eqref{eq:limsys-hom} if and only if solutions of the infinite 
		system~\eqref{eq:odesys-M-embed-infinity-hom} converge to that limit. 
		
		The discussion so far takes for granted that solutions of the infinite systems~\eqref{eq:limsys-hom} and~\eqref{eq:odesys-M-embed-infinity-hom}
		exist. Even though those systems are linear, the existence of solutions is not quite trivial, because the
		presence of the factor~$n$ on the right-hand-sides 
		of~\eqref{eq:odesys-M-embed-infinity-ODE-hom} and~\eqref{eq:limsys-hom} makes those right-hand-sides
		unbounded functions of the infinite solution vectors
		\begin{equation}
			\label{eq:uform}
			\mathbf  u^{\infty}\mathrel{:=} (u_1^{\infty},u_2^{\infty},\cdots), \qquad 
			\mathbf  u^{(M)} \mathrel{:=} (u_1^{(M)},u_2^{(M)},\cdots).
		\end{equation}
		From the proof of Theorem~\ref{thm:Niu}, however, it follows that 
		\begin{align}
			u_n^{\infty} &= (1-f_{\rm Bass})^n, \qquad  n=1,2, \dots,
			\nonumber
			\\
			u_n^{(M)} &= (1-f_{\rm Bass})^n, \qquad n=M+1,M+2, \dots.
			\label{eq:u_n^(M)-for-n>M-hom}
		\end{align}
		Therefore, solutions of the infinite systems~\eqref{eq:limsys-hom} and~\eqref{eq:odesys-M-embed-infinity-hom} do
		exist.
		
		The following technical results will be used in the statement and proof of Theorem~\ref{thm:Steve-hom}.
		\begin{lemma}
			Let $p,q >0$, let $q^{(M)}_n$ 
			be given by~\eqref{eq:odesys-q_n-M=infinity-hom}, 
			and let
			\begin{equation}
				\label{eq:eps_tilde}
				\widetilde \epsilon\mathrel{:=} \ln(1+\tfrac pq).
			\end{equation}
			In addition, for every $\epsilon\ge 0$ and  ${\bf v} = (v_1, v_2, \dots) \in{\mathbb R}^\mathbb{N}$, define
			\begin{equation}
				\label{eq:thetadef}
				\theta(\epsilon)\mathrel{:=} \frac{q e^{\epsilon}}{p+q},
				\qquad 		
				\|\mathbf  v\|_{\epsilon}\mathrel{:=} \sup_{1\le n<\infty} e^{-\epsilon n}|v_n|, \qquad 
				\vertiii{\mathbf  v}_\epsilon \mathrel{:=} \sup_{t\ge0}\|\mathbf  v(t)\|_\epsilon.
			\end{equation}
			Then 
			\begin{equation}
				\label{eq:thetaest-hom}
				\theta(\epsilon)<1,\qquad 0\le\epsilon<\widetilde\epsilon.
			\end{equation}	
			In addition, the function $\| {\bf v} \|_\epsilon$ is a norm on the space 
			$
			{\bf V}_\epsilon  := \{{\bf v} \, \Huge| \, \| {\bf v}  \|_\epsilon <\infty  \},
			$
			and the
			function $\vertiii{{\bf v}(t)}_\epsilon$ is a norm on the space $C^0_B([0,\infty), {\bf V}_\epsilon)$ of bounded continuous functions on
			$[0,\infty)$  taking values in ${\bf V}_\epsilon$. 
			%
		\end{lemma}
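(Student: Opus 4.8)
The plan is to treat the three assertions separately; all are elementary verifications, and only the first requires a short computation that pins down the role of~$\widetilde\epsilon$.

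First, for the estimate $\theta(\epsilon)<1$ on $[0,\widetilde\epsilon)$, I would note that $\theta(\epsilon)=qe^{\epsilon}/(p+q)$ is strictly increasing in~$\epsilon$, since $e^{\epsilon}$ is. Hence it suffices to evaluate the boundary value. Substituting $\epsilon=\widetilde\epsilon=\ln(1+\tfrac pq)$ gives $e^{\widetilde\epsilon}=1+\tfrac pq=\tfrac{p+q}{q}$, so that $\theta(\widetilde\epsilon)=\tfrac{q}{p+q}\cdot\tfrac{p+q}{q}=1$. By monotonicity, $\theta(\epsilon)<\theta(\widetilde\epsilon)=1$ for every $\epsilon<\widetilde\epsilon$. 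This is precisely the reason the threshold $\widetilde\epsilon$ is defined as it is, and it is the only genuinely non-routine point in the lemma.

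Second, to show that $\|\cdot\|_\epsilon$ is a norm on $\mathbf V_\epsilon$, I would verify the norm axioms one at a time, using throughout that the weights $e^{-\epsilon n}$ are strictly positive. For definiteness, $\|\mathbf v\|_\epsilon=0$ forces $e^{-\epsilon n}|v_n|=0$ for every~$n$, and since $e^{-\epsilon n}>0$ this yields $v_n=0$ for all~$n$, i.e. $\mathbf v=0$; the converse is immediate. Absolute homogeneity follows by factoring $|\alpha|$ out of the supremum, and the triangle inequality follows from the pointwise bound $e^{-\epsilon n}|u_n+v_n|\le e^{-\epsilon n}|u_n|+e^{-\epsilon n}|v_n|$ together with the subadditivity of the supremum. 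These same two inequalities also show that $\mathbf V_\epsilon$ is closed under addition and scalar multiplication, hence is a vector space on which $\|\cdot\|_\epsilon$ is a finite-valued norm.

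Third, for $\vertiii{\mathbf v(t)}_\epsilon=\sup_{t\ge0}\|\mathbf v(t)\|_\epsilon$ on $C^0_B([0,\infty),\mathbf V_\epsilon)$, I would observe that this is simply the supremum over~$t$ of the norm established in the previous step, and that its finiteness is guaranteed by the boundedness built into the definition of the space, so no separate boundedness argument is required. The axioms then transfer verbatim: definiteness, because $\vertiii{\mathbf v}_\epsilon=0$ forces $\|\mathbf v(t)\|_\epsilon=0$ and hence $\mathbf v(t)=0$ for every~$t$; homogeneity, by factoring out $|\alpha|$; and the triangle inequality, by applying the triangle inequality for $\|\cdot\|_\epsilon$ pointwise in~$t$ and then taking the supremum. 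I anticipate no real obstacle in any of these steps; the one point worth stating carefully is that the finiteness of both norms is part of the definition of the respective spaces, so the whole argument reduces to the boundary computation of the first paragraph plus standard weighted-sup-norm bookkeeping.
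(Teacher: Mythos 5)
Your proof is correct and takes the same (entirely elementary) route that the paper intends: the paper's own proof is just the remark that ``these are standard results,'' and your verification --- the monotonicity of $\theta$ together with the endpoint computation $\theta(\widetilde\epsilon)=1$, plus the routine weighted-sup-norm axioms --- is exactly the standard argument being invoked.
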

		\begin{proof}
			These are standard results.
		\end{proof}

		\begin{theorem}
			\label{thm:Steve-hom}
			Let $p,q>0$ and $0<\epsilon<\widetilde\epsilon$. Let
			${\mathbf  u}^\infty$ and ${\mathbf  u}^{(M)}$  be the solutions of~\eqref{eq:limsys-hom} and~\eqref{eq:odesys-M-embed-infinity-hom}, respectively. Then 
			\begin{equation}
				\label{eq:limu-hom}
				%
				\vertiii{\mathbf  u^{(M)}-{\mathbf  u}^{\infty}}_\epsilon= O\left(\frac1M\right),
				\qquad M \to \infty.
			\end{equation}
	\end{theorem}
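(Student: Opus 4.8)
The plan is to estimate the difference $\mathbf{w}^{(M)} := \mathbf{u}^{(M)} - \mathbf{u}^\infty$ by writing down the ODE it satisfies and bounding it in the weighted norm $\vertiii{\cdot}_\epsilon$ via a fixed-point / contraction argument. Subtracting \eqref{eq:limsys-hom} from \eqref{eq:odesys-M-embed-infinity-ODE-hom}, the component $w_n^{(M)} := u_n^{(M)} - u_n^\infty$ satisfies
\begin{equation*}
\frac{d}{dt} w_n^{(M)} = -n\left(p+q_n^{(M)}\right) w_n^{(M)} + n q_n^{(M)} w_{n+1}^{(M)} + n\left(q_n^{(M)} - q\right)\left(u_{n+1}^\infty - u_n^\infty\right),
\end{equation*}
with $w_n^{(M)}(0) = 0$. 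The first two terms have the same structure as the original operator, while the last term is an inhomogeneous forcing that measures how far the coefficients $q_n^{(M)}$ are from their limit $q$. The key observation is that $q_n^{(M)} - q = \frac{M-n}{M-1}q - q = -\frac{n-1}{M-1}q$ for $n \le M$ and vanishes for $n > M$, so the forcing is $O(1/M)$ in the appropriate sense.

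First I would recast this as an integral (Duhamel / variation-of-parameters) equation. Using the integrating factor $e^{n(p+q_n^{(M)})t}$ on the diagonal, each component becomes
\begin{equation*}
w_n^{(M)}(t) = \int_0^t e^{-n(p+q_n^{(M)})(t-s)} \left[ n q_n^{(M)} w_{n+1}^{(M)}(s) + n\left(q_n^{(M)}-q\right)\left(u_{n+1}^\infty(s)-u_n^\infty(s)\right) \right] ds.
\end{equation*}
This defines a map $\mathbf{w} \mapsto \mathcal{T}\mathbf{w}$ on $C^0_B([0,\infty), \mathbf{V}_\epsilon)$, and I would show it is a contraction in $\vertiii{\cdot}_\epsilon$. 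The crucial estimate is that the off-diagonal coupling term contributes a factor bounded by $\theta(\epsilon) = \frac{qe^\epsilon}{p+q} < 1$ (valid for $0 \le \epsilon < \widetilde\epsilon$ by \eqref{eq:thetaest-hom}): the weight ratio $e^{-\epsilon n}/e^{-\epsilon(n+1)} = e^\epsilon$ from shifting index $n \to n+1$ is exactly compensated by the time-integral $\int_0^t n q_n^{(M)} e^{-n(p+q_n^{(M)})(t-s)}\,ds \le \frac{q_n^{(M)}}{p+q_n^{(M)}} \le \frac{q}{p}$, and the choice of $\widetilde\epsilon$ guarantees the product stays below $1$. This is precisely why the exponential weight was introduced, and it is what tames the unbounded factor $n$.

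**The hard part will be** controlling the forcing term uniformly in the weighted norm and extracting the $1/M$ rate. By \eqref{eq:u_n^(M)-for-n>M-hom} we have $u_n^\infty = (1-f_{\rm Bass})^n$, so the difference $u_{n+1}^\infty - u_n^\infty = -f_{\rm Bass}(1-f_{\rm Bass})^n$ is bounded by $(1-f_{\rm Bass})^n \le 1$; combined with $|q_n^{(M)} - q| \le \frac{(n-1)q}{M-1}$, the $n$-th forcing term is $O\!\left(\frac{n^2}{M}\right)$ pointwise. Multiplying by the weight $e^{-\epsilon n}$ and using the same time-integral gain $\int_0^t n e^{-n(p+q_n^{(M)})(t-s)}\,ds = O(1)$, one sees that $e^{-\epsilon n} \cdot n \cdot \frac{n}{M}$ stays bounded uniformly in $n$ because $n^2 e^{-\epsilon n}$ is bounded; this yields a forcing of $\vertiii{\cdot}_\epsilon$-size $O(1/M)$. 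Then the contraction mapping estimate $\vertiii{\mathbf{w}^{(M)}}_\epsilon \le \frac{1}{1-\theta(\epsilon)} \cdot O(1/M)$ gives \eqref{eq:limu-hom}. I expect the genuinely delicate point to be verifying that all these bounds hold uniformly for all $t \ge 0$ (not just on compact intervals), which again relies on the integrating factor producing an honest exponential decay in $t-s$ rather than mere boundedness — so the stationarity of the exponential weight in $t$, built into the definition of $\vertiii{\cdot}_\epsilon$, is essential.
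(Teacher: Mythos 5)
Your proposal is correct and follows essentially the same route as the paper: subtract the two systems, apply an integrating factor to get a Duhamel representation, take the weighted norm $\vertiii{\cdot}_\epsilon$, absorb the coupling term using $\theta(\epsilon)<1$, and bound the residual forcing by $O(1/M)$ via the boundedness of $n e^{-\epsilon n}$. The only (harmless) differences are cosmetic: you put $q_n^{(M)}$ rather than $q$ in the integrating factor, so your forcing term involves $u_{n+1}^\infty-u_n^\infty$ instead of the paper's $u_n^{(M)}-u_{n+1}^{(M)}$, and your intermediate bound $\tfrac{q_n^{(M)}}{p+q_n^{(M)}}\le \tfrac{q}{p}$ should read $\le\tfrac{q}{p+q}$ (which is what you actually use when invoking $\theta(\epsilon)$).
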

	
	\begin{proof}
		Since $\{u^{(M)}_n\}_{n=1}^M$ are probabilities, see~\eqref{eq:u^M_n=[S^n](M)-hom}, they are bounded between~0 
		and~1.
		In addition, $\{u^{(M)}_n\}_{n=M+1}^\infty$ are given by~\eqref{eq:u_n^(M)-for-n>M-hom},
		and so are also bounded between~0 and~1. Therefore, we have the uniform bound
		\begin{equation}
			\label{eq:ubnd-hom}
			\vertiii{{\mathbf  u}^{(M)}}_0 \le 1.
		\end{equation}
		Both here and in the other cases presented in this paper, the slighty weaker uniform bound
		$\vertiii{\myvec u^{(M)}}_0\le \frac{1}{1-\theta(0)}$ can be obtained directly from relevant system, in this case
		~\eqref{eq:odesys-M-embed-infinity-hom},  by
		multiplying by an integrating factor, integrating, applying the $\vertiii{\,}_0$ norm, and estimating the result, in similar fashion to
		the calculation below.

		Subtracting \eqref{eq:limsys-hom} from \eqref{eq:odesys-M-embed-infinity-hom} yields
		\begin{equation*}
			\begin{aligned}
				\frac{d}{dt} \left(u^{(M)}_n-u_n^{\infty}\right)&+n(p+q)\left(u^{(M)}_n-u_n^{\infty}\right)
				\\&\qquad=n q\left(u^{(M)}_{n+1}-u_{n+1}^{\infty}\right)+n\left(q-q^{(M)}_n\right)
				\left(u^{(M)}_n-u^{(M)}_{n+1}\right).
			\end{aligned}
		\end{equation*}
		Multiplying 
		by the integrating factor $e^{n(p+q)t}$, 
		integrating from zero to~$t$, and using
		$u^{(M)}_n(0)=u_n^{\infty}(0)$, 
		yields
		\begin{equation}
			\label{eq:diffinteq-hom}
			\begin{aligned}
				u^{(M)}_n(t)-u_n^{\infty}(t)&=n q \int_0^t e^{-n(p+q)(t-s)}\left(u^{(M)}_{n+1}(s)-u_{n+1}^{\infty}(s)\right)\,ds
				\\&\qquad+n \left(q-q^{(M)}_n\right)\int_0^t e^{-n(p+q)(t-s)}\left(u^{(M)}_n(s)-u^{(M)}_{n+1}(s)\right)\,ds.
			\end{aligned}
		\end{equation}
		Let $0<\epsilon<\widetilde \epsilon$. Multiplying both sides of the infinite system~\eqref{eq:diffinteq-hom} by~$e^{-n\epsilon}$, and taking the supremum over $n,t$ 
		gives
		\begin{equation*}
			\begin{aligned}
				&\vertiii{\mathbf  u^{(M)}-\mathbf  u^{\infty}}_\epsilon
				\le
				\sup_{t,n}e^{\epsilon} n q  \int_0^t e^{-n(p+q)(t-s)} \vertiii{\mathbf  u^{(M)}-\mathbf  u^\infty}_\epsilon\,ds
				\\&\qquad\qquad\qquad\qquad+\sup_{t,n} e^{-n\epsilon} n \left|q-q^{(M)}_n\right| \int_0^t e^{-n(p+q)(t-s)}  2\vertiii{\mathbf  u^{(M)}}_0 \,ds
				\\ 
				&\quad=\sup_{t,n} \frac{q e^{\epsilon}(1-e^{-n(p+q)t})}{p+q}\vertiii{\mathbf  u^{(M)}-\mathbf  u^\infty}_\epsilon
				\\&\qquad\qquad+2 \vertiii{\mathbf  u^{(M)}}_0 \sup_{t,n} \frac{ \left|q-q^{(M)}_n\right|e^{-n\epsilon}(1-e^{-n(p+q)t})}{p+q} 
				\\&\quad\le \theta(\epsilon) \vertiii{\mathbf  u^{(M)}-\mathbf  u^\infty}_\epsilon+
				2 \frac{\sup_n  \left(\left|q-q^{(M)}_n\right|e^{-n\epsilon}\right)}{p+q}, 
			\end{aligned}
		\end{equation*}
		where in the last inequality we used~\eqref{eq:ubnd-hom}.  
		Since $\theta(\epsilon)<1$, 
		see~\eqref{eq:thetaest-hom}, 
		then 
		\begin{equation}
			\label{eq:diffest2-hom}
			\vertiii{\mathbf  u^{(M)}-\mathbf  u^\infty}_\epsilon\le \frac{2 }{(p+q) 
				(1-\theta(\epsilon))} \sup_n \left(\left|q-q^{(M)}_n\right|e^{-n\epsilon}\right).
		\end{equation}
		By the definition of~$q^{(M)}_n$, see~\eqref{eq:odesys-q_n-M=infinity-hom}, for any $n \ge 1$, 
		$$
		\left|q-q^{(M)}_n\right|e^{-n\epsilon} \le q \frac{n-1}{M-1}e^{-n\epsilon}
		\le \frac{q}{\epsilon(M-1)} \left(n \epsilon e^{-n\epsilon}\right)
		\le \frac{q}{\epsilon(M-1)} C,
		$$
		where $C := \max_{y \ge 0}(y e^{-y})<\infty$.
		Therefore, 
		for any fixed $\epsilon>0$, 
		\begin{equation}
			\label{eq:q-q^m=O(1/M)-hom}
			\sup_n \left(\left|q-q^{(M)}_n\right|e^{-n\epsilon}\right)=O\left(\frac1M\right), \qquad M\to \infty.
		\end{equation}
		Relation~\eqref{eq:limu-hom} follows from~\eqref{eq:diffest2-hom} and~\eqref{eq:q-q^m=O(1/M)-hom}.
	\end{proof}

	\begin{corollary}
		\label{cor:Niu-hom}
		for any $n\in\NN$, $u^{(M)}_n(t)\to u_n^\infty(t)$ uniformly in $t$ as $M \to \infty$, and $u^{(M)}_n(t)- u_n^\infty(t)  = O\left(\frac1M\right)$ as $M \to \infty$. 
	\end{corollary}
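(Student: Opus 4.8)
The plan is to obtain Corollary~\ref{cor:Niu-hom} as an essentially immediate consequence of Theorem~\ref{thm:Steve-hom}, which already carries out all of the analytic work. The point is that Theorem~\ref{thm:Steve-hom} controls the \emph{entire} solution sequence in the weighted norm $\vertiii{\,\cdot\,}_\epsilon$, and that this weighted-norm control automatically yields pointwise (fixed-$n$) control, uniformly in $t$, once one peels off the weight $e^{-\epsilon n}$ attached to a single fixed index.

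Concretely, I would first fix any $\epsilon$ with $0<\epsilon<\widetilde\epsilon$, so that Theorem~\ref{thm:Steve-hom} applies. Then, for an arbitrary but fixed $n\in\NN$, I would use the definition of the norms in~\eqref{eq:thetadef}: for every $t\ge 0$,
\begin{equation*}
	e^{-\epsilon n}\left|u^{(M)}_n(t)-u^{\infty}_n(t)\right|
	\le \left\|\mathbf u^{(M)}(t)-\mathbf u^{\infty}(t)\right\|_\epsilon
	\le \vertiii{\mathbf u^{(M)}-\mathbf u^{\infty}}_\epsilon .
\end{equation*}
Multiplying through by the constant $e^{\epsilon n}$, which depends on $n$ but not on $t$ or $M$, and taking the supremum over $t\ge 0$ gives
\begin{equation*}
	\sup_{t\ge 0}\left|u^{(M)}_n(t)-u^{\infty}_n(t)\right|
	\le e^{\epsilon n}\,\vertiii{\mathbf u^{(M)}-\mathbf u^{\infty}}_\epsilon .
\end{equation*}

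To conclude, I would invoke the estimate~\eqref{eq:limu-hom} of Theorem~\ref{thm:Steve-hom}, namely $\vertiii{\mathbf u^{(M)}-\mathbf u^{\infty}}_\epsilon=O(1/M)$ as $M\to\infty$. Since $e^{\epsilon n}$ is a fixed constant for fixed $n$, the displayed bound shows that $\sup_{t\ge 0}|u^{(M)}_n(t)-u^{\infty}_n(t)|=O(1/M)$, which simultaneously delivers both assertions of the corollary: the convergence $u^{(M)}_n(t)\to u^{\infty}_n(t)$ is uniform in $t$ (because the supremum over $t$ is bounded by a quantity tending to $0$), and the rate is $O(1/M)$.

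I do not expect any genuine obstacle here, since the difficult uniform estimate has already been established in Theorem~\ref{thm:Steve-hom}. The only point to note is that the constant $e^{\epsilon n}$ in the bound grows with $n$, so the implied constant in the $O(1/M)$ estimate degrades as $n$ increases; this is harmless because the corollary is asserted for each \emph{fixed} $n$ separately. Recalling the identifications $u^{(M)}_n(t)=[S^n](t;M)$ and $u^{\infty}_n(t)=[S^n_\infty](t)$ from~\eqref{eq:u^M_n=[S^n](M)+infinity-hom}, this also proves Lemma~\ref{lem:Steve-convergence-homog}, as claimed.
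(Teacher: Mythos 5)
Your proposal is correct and follows exactly the paper's own argument: peel off the weight $e^{-\epsilon n}$ from the $\vertiii{\,\cdot\,}_\epsilon$ norm for fixed $n$ (yielding the bound $\left|u^{(M)}_n(t)-u_n^\infty(t)\right|\le e^{n\epsilon}\vertiii{\mathbf u^{(M)}-\mathbf u^{\infty}}_\epsilon$) and then apply the estimate~\eqref{eq:limu-hom} of Theorem~\ref{thm:Steve-hom}. No differences worth noting.
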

	\begin{proof}
		For any $n\in\NN$ and $0 < \epsilon<\tilde{\epsilon}$.
		\begin{equation}
			\label{eq:|u_n|<|||u_n|||-hom}
			\left| u^{(M)}_n(t) -  u_n^\infty(t) \right| \le e^{n \epsilon}\vertiii{\mathbf  u^{(M)}-\mathbf  u^\infty}_\epsilon ,
		\end{equation}
		see~\eqref{eq:thetadef}. Therefore, the result follows from the convergence estimate \eqref{eq:limu-hom}.
	\end{proof}
	
		
			
			\section{Homogeneous circle}
			\label{sec:Bass-circle}
			To understand the {\em role of the network} in the discrete Bass model, it is instructive to consider the diffusion on the ``opposite of a complete network", namely, on a circle with $M$ nodes, where each node has only 2 edges. Thus, we assume that
			\begin{equation}
				\label{eq:circle_conditions}
				p_j\equiv p, \qquad d_j(M)\equiv 2,\qquad q_{i,j}=\begin{cases}
					{q},& i=j\pm 1 \mod M,\\
					0,& {\rm otherwise,}
				\end{cases},\qquad 1\leq i,j\leq M.
			\end{equation}
			In this case, \eqref{eq:discrete_model} reads
			\begin{equation}
				{\label{eq:general_circle}}
				{\rm Prob}(X_j(t+\Delta t)=1\ |\ {\bf X}(t))=\begin{cases}
					\hfill 1, \hfill & \qquad {\rm if}\ X_j(t)=1,\\
					\left(p+\frac{q}{2}X_{j-1}(t)+\frac{q}{2}X_{j+1}(t)\right)\Delta t, & \qquad {\rm if}\ X_j(t)=0.
				\end{cases}
			\end{equation}
			The $M\to\infty$ limit of the discrete Bass model on a circle was explicitly computed by Fibich and Gibori:
			\begin{theorem}[{\cite{OR-10}}]
				\label{thm:Fibich_Gibori}
				The expected fraction of adopters in the discrete Bass model~$(\ref{eq:general_initial},\ref{eq:general_circle})$ on a homogeneous circle as $M\to\infty$ is
				\begin{equation}
					\label{eq:f_Bass-Niu-circle}
					\lim\limits_{M\to\infty}f^{\rm circle}_{\rm discrete}(t;p,q,M)=f_{\rm 1D}(t;p,q),\qquad
					f_{\rm 1D}(t;p,q):=1-e^{-(p+q)t+q\frac{1-e^{-pt}}{p}}.
				\end{equation}
			\end{theorem}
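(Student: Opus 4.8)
The plan is to reuse the three-stage strategy of Section~\ref{sec:hom_comp}: reduce the master equations to a closed finite system by exploiting the network symmetry, embed that system in an infinite one and prove convergence with a weighted-norm estimate, and finally close the infinite limit system with an ansatz. The only genuinely new ingredient is the choice of reduced variables. On the circle the relevant reduced quantities are the probabilities of \emph{consecutive arcs} being nonadopters, rather than of arbitrary $n$-subsets. By the rotational symmetry of~\eqref{eq:circle_conditions} and the symmetric initial data~\eqref{eq:general_initial}, the probability that a given arc of $n$ consecutive nodes are all nonadopters depends only on $n$; denote it $[S^n](t;M)$. The key structural fact is that the master equation~\eqref{eq:masterHeterofc} couples an arc $A=\{m_1,\dots,m_n\}$ only to the arcs obtained by appending its left neighbor $m_1-1$ or its right neighbor $m_n+1$: on the circle these are the only external nodes adjacent to $A$, since every interior node of $A$ has both of its two neighbors inside $A$. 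Each extension is again a consecutive arc, of length $n+1$, so the family $\{[S^n]\}$ is closed. Substituting $d_j\equiv 2$ and $q_{i,j}$ from~\eqref{eq:circle_conditions}, and using symmetry to identify both extensions with $[S^{n+1}]$, I expect
\begin{equation*}
\begin{aligned}
\frac{d}{dt}[S^n](t;M)&=-(np+q)[S^n]+q[S^{n+1}],\qquad n=1,\dots,M-1,\\
\frac{d}{dt}[S^M](t;M)&=-Mp[S^M],\qquad [S^n](0;M)=1.
\end{aligned}
\end{equation*}
In contrast to the complete network, the internal-influence coefficient is the constant $q$ rather than a quantity growing with $n$, because only the two boundary nodes of the arc are exposed to outside adopters; and as in~\eqref{eq:f_disc_simple}, $f^{\rm circle}_{\rm discrete}=1-[S^1]$.

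Fixing $n$ and letting $M\to\infty$ gives the infinite limit system $\frac{d}{dt}[S^n_\infty]=-(np+q)[S^n_\infty]+q[S^{n+1}_\infty]$, $[S^n_\infty](0)=1$. Writing $u^{(M)}_n:=[S^n]$, $u^\infty_n:=[S^n_\infty]$, I embed the finite system as in Section~\ref{sec:proof-Niu} by setting $q^{(M)}_n=q$ for $n\ne M$ and $q^{(M)}_M=0$; the vanishing of $q^{(M)}_M$ decouples the first $M$ equations (which reproduce the circle system) from the tail $\{u^{(M)}_n\}_{n>M}$, so that $u^{(M)}_n=u^\infty_n$ for $n>M$. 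The decisive feature, absent in the complete case, is that $q^{(M)}_n-q$ is supported at the single index $n=M$. Hence, with $w_n:=u^{(M)}_n-u^\infty_n$ (so $w_n\equiv 0$ for $n>M$), subtracting the two systems yields a source concentrated at $n=M$,
\begin{equation*}
\frac{d}{dt}w_n=-(np+q)w_n+q\,w_{n+1}+\delta_{n,M}\,q\,(u^{(M)}_M-u^{(M)}_{M+1}).
\end{equation*}

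Integrating against the factor $e^{n(p+q)t}$, applying $e^{-n\epsilon}$ and the $\vertiii{\cdot}_\epsilon$-norm exactly as in the proof of Theorem~\ref{thm:Steve-hom}, the coupling term contributes at most $\theta(\epsilon)\vertiii{\mathbf w}_\epsilon$, while the localized source, being bounded by $q$ and carrying the weight $e^{-M\epsilon}$, contributes at most a constant times $e^{-M\epsilon}$. Since $\theta(\epsilon)<1$ for $0<\epsilon<\widetilde\epsilon$, this gives $\vertiii{\mathbf w}_\epsilon=O(e^{-M\epsilon})$, whence $|w_1(t)|\le e^{\epsilon}\vertiii{\mathbf w}_\epsilon=O(e^{-M\epsilon})$ uniformly in $t$, the claimed exponential rate. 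This localization of the source at $n=M$, forced by the fact that the finite and infinite circle systems differ only in their top equation, is precisely what upgrades the $O(1/M)$ of the complete network to exponential decay; identifying and exploiting it is, I expect, the main obstacle, together with establishing closure of the consecutive-arc hierarchy.

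Finally I solve the infinite limit system with the product ansatz $[S^n_\infty](t)=a(t)\,b(t)^n$. Matching the terms linear in $n$ forces $b'=-p\,b$, and $[S^n_\infty](0)=1$ for all $n$ forces $b(0)=a(0)=1$, so $b(t)=e^{-pt}$; the remaining $n$-independent balance gives $a'=q(b-1)a$, hence $a(t)=\exp\!\big(q\tfrac{1-e^{-pt}}{p}-qt\big)$. Therefore $[S^1_\infty]=a\,b=e^{-(p+q)t+q(1-e^{-pt})/p}$, and combining $f^{\rm circle}_{\rm discrete}=1-[S^1]$ with the convergence above yields $\lim_{M\to\infty}f^{\rm circle}_{\rm discrete}=1-[S^1_\infty]=f_{\rm 1D}$, as in~\eqref{eq:f_Bass-Niu-circle}. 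The same explicit formula shows $\vertiii{\mathbf u^\infty}_\epsilon<\infty$, securing existence of the limit solution.
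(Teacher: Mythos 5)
Your proposal is correct and follows essentially the same route as the paper: the same reduction to consecutive-arc probabilities $[S^n_{\rm circle}]$ yielding $\frac{d}{dt}[S^n]=-(np+q)[S^n]+q[S^{n+1}]$, the same embedding with $q^{(M)}_n=q$ for $n\neq M$ and $q^{(M)}_M=0$, the same weighted-norm contraction with the perturbation supported only at $n=M$ (which is exactly how the paper obtains $\sup_n|q-q^{(M)}_n|e^{-n\epsilon}=qe^{-M\epsilon}$ and hence the exponential rate), and an ansatz $a(t)b(t)^n$ that is the paper's $e^{-(n-1)pt}[S_{\infty,\rm circle}]$ in a different parametrization. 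No gaps.
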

			\begin{proof}
				We sketch the proof of~\cite{OR-10}. Let $[S^n_{\rm circle}](t)$ denote the probability that $n$ adjacent nodes $[S_{k+1},\dots,S_{k+n}]$ are all nonadopters at time~$t$. By translation invariance, $[S_{k+1},\dots,S_{k+n}]$ is independent of $k$. Hence, $$f^{\rm circle}_{\rm discrete}=1-[S^1_{\rm circle}].$$ By translation invariance and~\eqref{eq:circle_conditions}, the master equations~\eqref{eqs:masterHeterofc} reduce to
				\begin{subequations}
					\label{eq:mastereqs1d}
					\begin{align}
						\frac{d}{dt}[S^n_{\rm circle}]&=-(np+q)[S^n_{\rm circle}]+q[S^{n+1}_{\rm circle}],\qquad n=1,\dots,M-1,\\
						\frac{d}{dt}[S^M_{\rm circle}]&=-Mp[S^M_{\rm circle}],
					\end{align}
					subject to the initial conditions
					\begin{equation}
						[S^n_{\rm circle}](0)=1\qquad n=1,\dots,M.
					\end{equation}
				\end{subequations}
				Fixing $n$ and letting $M\to\infty$ in~\eqref{eq:mastereqs1d} gives the limiting system
				\begin{equation}
					\label{eq:mastereq1dinf}
					\frac{d}{dt}[S^n_{\infty,\rm circle}]=-(np+q)[S^n_{\infty,\rm circle}]+q[S^{n+1}_{\infty,\rm circle}],\qquad [S^n_{\infty,\rm circle}](0)=1,\qquad n=1,2,\dots
				\end{equation}
				The ansatz
				\begin{equation}
					\label{eq:substitution}
					[S^n_{\infty,\rm circle}]=e^{-(n-1)pt}[S_{\infty,\rm circle}]
				\end{equation}
				reduces~\eqref{eq:mastereq1dinf} into the single ODE
				\begin{equation*}
					\frac{d}{dt}[S_{\infty,\rm circle}]=-(p+q)[S_{\infty,\rm circle}]+qe^{-pt}[S_{\infty,\rm circle}], \qquad [S_{\infty,\rm circle}](0)=1.
				\end{equation*}
				Hence
				\begin{equation*}
					[S_{\infty,\rm circle}]= e^{-(p+q)t+q\frac{1-e^{-pt}}{p}}.
				\end{equation*}
				Since $\lim\limits_{M\to\infty} f^{\rm circle}_{\rm discrete}(t;p,q,M)=1-[S_{\infty,\rm circle}]$, the result follows.
				
				Note, however, that in~\cite{OR-10}, Fibich and Gibori did not rigorously justify that~\eqref{eq:mastereq1dinf} is the limit of~\eqref{eq:mastereqs1d} as $M\to\infty$. We will justify this limit, and thus complete the proof of Theorem~\ref{thm:Fibich_Gibori}, in Section~\ref{subsec:conv_rate_circle}.
			\end{proof}
			
			\subsection{Convergence and rate of convergence}
			\label{subsec:conv_rate_circle}
			\begin{theorem}
				\label{thm:circle_conv}
				Assume the conditions of Theorem~\ref{thm:Fibich_Gibori}. Then the limit~\eqref{eq:f_Bass-Niu-circle} is uniform in~$t$. Moreover, the rate of convergence is exponential, i.e., for any $0<\epsilon<\widetilde\epsilon$, where $\widetilde\epsilon$ is given by~\eqref{eq:eps_tilde},
				\begin{equation}
					\label{eq:f_Bass-rate-of-convergence-hom-circle}
					f^{\rm circle}_{\rm discrete}(t;p,q,M) - f_{\rm 1D}(t;p,q) = O\left(e^{-M\epsilon}\right), \qquad M \to \infty.
				\end{equation}
			\end{theorem}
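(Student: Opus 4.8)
The plan is to re-run the weighted-norm argument of Theorem~\ref{thm:Steve-hom} and Corollary~\ref{cor:Niu-hom} for the circle system~\eqref{eq:mastereqs1d}, using the same norms $\|\cdot\|_\epsilon$ and $\vertiii{\cdot}_\epsilon$ and the same threshold $\widetilde\epsilon$. The one structural difference --- and the source of the exponential rather than algebraic rate --- is that the finite and infinite circle systems have \emph{identical} coefficients for every $n=1,\dots,M-1$; the only discrepancy is the missing coupling term at the boundary index $n=M$. Because in the weighted norm the index $n=M$ carries the weight $e^{-M\epsilon}$, a defect localized there produces an $O(e^{-M\epsilon})$ error rather than the $O(1/M)$ obtained on the complete graph, where the defect $q-q^{(M)}_n$ was spread over all indices.

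Concretely, write $u_n^{(M)}(t):=[S^n_{\rm circle}](t;M)$ and $u_n^{\infty}(t):=[S^n_{\infty,\rm circle}](t)$, and embed the finite system~\eqref{eq:mastereqs1d} into the infinite system $\frac{d}{dt}u_n^{(M)}=-(np+q_n^{(M)})u_n^{(M)}+q_n^{(M)}u_{n+1}^{(M)}$, $n\ge1$, with
\[
q_n^{(M)}=
\begin{cases}
q, & n=1,\dots,M-1,\\
0, & n=M,\\
q, & n\ge M+1.
\end{cases}
\]
The choice $q_M^{(M)}=0$ reproduces the boundary equation $\frac{d}{dt}u_M^{(M)}=-Mp\,u_M^{(M)}$ and decouples the tail $\{u_n^{(M)}\}_{n\ge M+1}$, which therefore equals $\{u_n^{\infty}\}_{n\ge M+1}$; existence of both infinite solutions and finiteness of their norms follow from the explicit formula $u_n^{\infty}=e^{-(n-1)pt}[S_{\infty,\rm circle}]$ from the proof of Theorem~\ref{thm:Fibich_Gibori}, and the probabilistic bound gives $\vertiii{\mathbf u^{(M)}}_0\le1$ as in~\eqref{eq:ubnd-hom}.

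Subtracting~\eqref{eq:mastereq1dinf} from the embedded system, multiplying by $e^{(np+q)t}$ and integrating yields the analogue of~\eqref{eq:diffinteq-hom}, now with $n(p+q)$ replaced by $np+q$, the coupling coefficient $nq$ replaced by the constant $q$, and the defect $q-q_n^{(M)}$ supported only at $n=M$. Applying $\vertiii{\cdot}_\epsilon$ and using $\int_0^t e^{-(np+q)(t-s)}\,ds\le\frac{1}{np+q}$, the coupling term contributes at most $\sup_{n\ge1}\frac{q e^{\epsilon}}{np+q}\,\vertiii{\mathbf u^{(M)}-\mathbf u^{\infty}}_\epsilon=\theta(\epsilon)\,\vertiii{\mathbf u^{(M)}-\mathbf u^{\infty}}_\epsilon$, the supremum being attained at $n=1$ since here the coupling coefficient is the constant $q$ rather than $nq$. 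The defect term is nonzero only at $n=M$, where the weight $e^{-M\epsilon}$ together with $\vertiii{\mathbf u^{(M)}}_0\le1$ bounds it by $\frac{2q}{Mp+q}e^{-M\epsilon}$.

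Since $\theta(\epsilon)<1$ for $0<\epsilon<\widetilde\epsilon$ by~\eqref{eq:thetaest-hom}, absorbing the coupling term gives $\vertiii{\mathbf u^{(M)}-\mathbf u^{\infty}}_\epsilon\le\frac{2q}{(Mp+q)(1-\theta(\epsilon))}e^{-M\epsilon}=O(e^{-M\epsilon})$. As in Corollary~\ref{cor:Niu-hom} this yields $|u_1^{(M)}(t)-u_1^{\infty}(t)|\le e^{\epsilon}\vertiii{\mathbf u^{(M)}-\mathbf u^{\infty}}_\epsilon=O(e^{-M\epsilon})$ uniformly in $t$, and since $f^{\rm circle}_{\rm discrete}=1-u_1^{(M)}$ and $f_{\rm 1D}=1-u_1^{\infty}$ by Theorem~\ref{thm:Fibich_Gibori}, estimate~\eqref{eq:f_Bass-rate-of-convergence-hom-circle} follows. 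I expect no serious obstacle: the argument is essentially a transcription of the complete-network proof, and the only genuinely new ingredient is the observation that the defect lives at the single index $n=M$, which is exactly what converts the $O(1/M)$ rate into an exponential one.
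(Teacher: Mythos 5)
Your proposal is correct and follows essentially the same route as the paper: the same embedding of~\eqref{eq:mastereqs1d} into an infinite system with $q_M^{(M)}=0$, the same weighted norms and threshold $\widetilde\epsilon$, the same integrating-factor/fixed-point estimate, and the same key observation that the defect $q-q_n^{(M)}$ is supported only at $n=M$, yielding $\sup_n|q-q_n^{(M)}|e^{-n\epsilon}=qe^{-M\epsilon}$ and hence the exponential rate. Your bound $\tfrac{2q}{Mp+q}e^{-M\epsilon}$ on the defect term is marginally sharper than the paper's $\tfrac{2q}{p+q}e^{-M\epsilon}$, but this makes no difference to the conclusion.
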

			\begin{proof}
				In light of the proof of Theorem~\ref{thm:Fibich_Gibori}, it only remains to show that for any $n \in \mathbb{N}$ and $p,q>0$:
				\begin{enumerate}
					\item
					The solution $[S^n_{\rm circle}](t;p,q,M)$  of~\eqref{eq:mastereqs1d}
					converges, uniformly in~$t$, to the solution $[S^n_{\infty,\rm circle}](t;p,q)$ of~\eqref{eq:mastereq1dinf} as $M \to \infty$.
					\item
					The rate of convergence is exponential, i.e.,
					\begin{equation*}
						[S^n_{\rm circle}](t;M)  - [S^n_{\infty,\rm circle}](t)  = O\left(e^{-M\epsilon}\right), \qquad M \to \infty.
					\end{equation*}
				\end{enumerate} 
				These results are proved in Corollary~\ref{cor:Niu-circle} below.
			\end{proof}
			The exponential rate of convergence predicted in Theorem~\ref{thm:circle_conv} is illustrated numerically in Figure~\ref{fig:convergence_circle}, where we observe that $f_{\rm 1D}-f^{\rm circle}_{\rm discrete}\approx Ce^{-1.3M}$. Here $f^{\rm circle}_{\rm discrete}(t;p,q,M)$ was calculated using the explicit expression obtained in~\cite{OR-10,Bass-boundary-18}.
			\begin{figure}[ht!]
				\begin{center}
					\scalebox{1}{\includegraphics{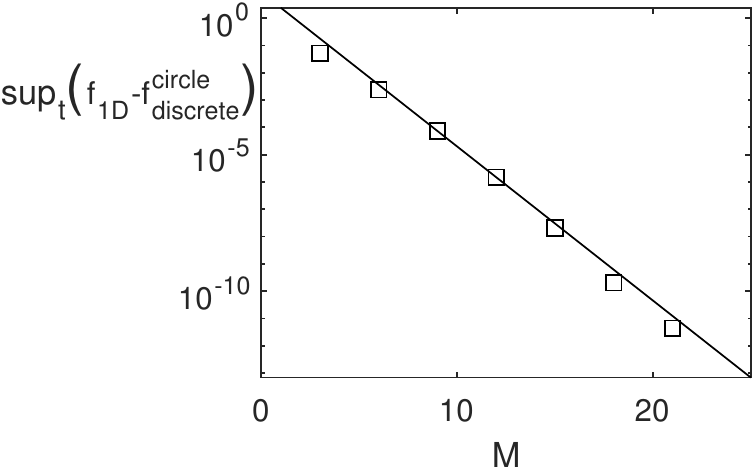}}
					\caption{Semi-log plot of $\sup_{t}\left(f_{\rm 1D}-f^{\rm circle}_{\rm discrete}\right)$ as a function of $M$. The fitted solid line is $y=8.8e^{-1.3M}$. Here, $p=0.02$ and $q=0.11$.}
					\label{fig:convergence_circle}
				\end{center}
			\end{figure}
			
			The analysis that leads to Corollary~\ref{cor:Niu-circle} is nearly identical to that in Section~\ref{sec:proof-Niu}, thus illustrating the power of our method. Let
			
			\begin{subequations}
				
				\label{eq:u^M_n=[S^n](M)+infinity}	
				\begin{align}		
					\label{eq:u^M_n=[S^n](M)}	
					&u^{(M)}_n(t):= [S^n_{\rm circle}](t;M),		
					\qquad		
					q^{(M)}_n:= \begin{cases}
						q, & n=1,\dots,M-1,\\
						0, & n=M,
					\end{cases} \qquad		
					n=1, \dots, M,		
					\\	
					&    u^{\infty}_n(t):= [S^n_{\infty,\rm circle}](t),	
					\qquad n=1, 2, \dots.	
				\end{align}	
			\end{subequations}
			Then we can rewrite the systems~\eqref{eq:mastereqs1d} and~\eqref{eq:mastereq1dinf} as
			
			\begin{equation}	
				\label{eq:odesys-M}
				\frac{d}{dt}u^{(M)}_n(t)=-\left(np+q^{(M)}_n\right)u^{(M)}_n+q^{(M)}_nu^{(M)}_{n+1}.
				\qquad u^{(M)}_n(0)=1, \qquad
				n=1, \dots, M,
			\end{equation}
			and
			\begin{equation}
				\label{eq:limsys}
				\frac{d}{dt}{u}_n^{\infty}(t)=-(np+q) u_n^{\infty}+q u_{n+1}^{\infty},
				\qquad u_n^{\infty}(0)=1,
				\qquad n=1,2, \dots, 
			\end{equation}
			respectively.
			We embed~\eqref{eq:odesys-M} into the infinite system
			\begin{subequations}	
				\label{eq:odesys-M-embed-infinity-circle}	
				\begin{equation}			
					\label{eq:odesys-M-embed-infinity-ODE-circle}			
					\frac{d}{dt}u^{(M)}_n(t)=-\left(np+q^{(M)}_n\right)u^{(M)}_n+q^{(M)}_nu^{(M)}_{n+1},			
					\qquad u^{(M)}_n(0)=1, \qquad			
					n=1,2, \dots,			
				\end{equation}	
				where	
				\begin{equation}			
					\label{eq:odesys-q_n-M=infinity}			
					q^{(M)}_n=			
					\begin{cases}				
						q, & \quad n\neq M, \\				
						0, & \quad n=M.		
					\end{cases}			
				\end{equation} 	
			\end{subequations}
			By~\eqref{eq:substitution},
			\begin{align}		
				u_n^{\infty} &= e^{-(n-1)pt}[S_{\infty,\rm circle}], \qquad  n=1,2, \dots,		
				\nonumber		
				\\		
				u_n^{(M)} &= e^{-(n-1)pt}[S_{\infty,\rm circle}], \qquad n=M+1,M+2, \dots.		
				\label{eq:u_n^(M)-for-n>M}		
			\end{align}
			
			Therefore, solutions of the infinite systems~\eqref{eq:limsys} and~\eqref{eq:odesys-M-embed-infinity-circle} do exist.
			\begin{lemma}
				\label{thm:Steve}
				Let $p,q>0$ and $0<\epsilon<\widetilde\epsilon$, where $\widetilde\epsilon$ is given by~\eqref{eq:eps_tilde}. Let ${\mathbf  u}^\infty$ and ${\mathbf  u}^{(M)}$  be the solutions of~\eqref{eq:limsys} and~\eqref{eq:odesys-M-embed-infinity-circle}, respectively. Then
				\begin{equation}
					\label{eq:limu-circle}
					%
					\vertiii{\mathbf  u^{(M)}-{\mathbf  u}^{\infty}}_\epsilon= O\left(e^{-M\epsilon}\right),			
					\qquad M \to \infty.			
				\end{equation}
				
			
		\end{lemma}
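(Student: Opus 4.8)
The plan is to mirror the argument of Section~\ref{sec:proof-Niu} essentially line by line, tracking the single structural change that turns the algebraic $O(1/M)$ rate into the exponential rate~\eqref{eq:limu-circle}. First I would record the uniform bound $\vertiii{\mathbf u^{(M)}}_0 \le 1$: for $n \le M$ the components $u^{(M)}_n = [S^n_{\rm circle}]$ are probabilities and hence lie in $[0,1]$, while for $n > M$ they are given by~\eqref{eq:u_n^(M)-for-n>M}, where $e^{-(n-1)pt} \le 1$ and $[S_{\infty,\rm circle}] = 1 - f_{\rm 1D} \in [0,1]$. Next I would subtract~\eqref{eq:limsys} from the embedded system~\eqref{eq:odesys-M-embed-infinity-ODE-circle} and regroup, adding and subtracting $q\,u^{(M)}_n$ and $q\,u^{(M)}_{n+1}$, to obtain
\begin{equation*}
\frac{d}{dt}\left(u^{(M)}_n - u^{\infty}_n\right) + (np+q)\left(u^{(M)}_n - u^{\infty}_n\right) = q\left(u^{(M)}_{n+1} - u^{\infty}_{n+1}\right) + \left(q - q^{(M)}_n\right)\left(u^{(M)}_n - u^{(M)}_{n+1}\right).
\end{equation*}
The integrating factor is now $e^{(np+q)t}$ rather than $e^{n(p+q)t}$, reflecting that on the circle only the external influence scales with $n$; multiplying by it, integrating from $0$ to $t$, and using the matching initial data $u^{(M)}_n(0) = u^{\infty}_n(0) = 1$ produces the circle analog of~\eqref{eq:diffinteq-hom}.

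Then I would multiply by $e^{-n\epsilon}$ and take the supremum over $n$ and $t$, exactly as in the proof of Theorem~\ref{thm:Steve-hom}. The first term on the right is controlled by the weighted norm: using $|u^{(M)}_{n+1} - u^{\infty}_{n+1}| \le e^{(n+1)\epsilon}\vertiii{\mathbf u^{(M)} - \mathbf u^{\infty}}_\epsilon$ together with $q\int_0^t e^{-(np+q)(t-s)}\,ds \le \frac{q}{np+q} \le \frac{q}{p+q}$, one finds this contribution is bounded by $\theta(\epsilon)\vertiii{\mathbf u^{(M)} - \mathbf u^{\infty}}_\epsilon$ with the \emph{same} $\theta(\epsilon) = \frac{q e^{\epsilon}}{p+q}$ as in the complete case; here the bound $\frac{q}{np+q}$ is largest at $n=1$, whereas in the complete case the explicit factor $n$ cancelled against $\frac{1}{n(p+q)}$. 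Bounding $|u^{(M)}_n - u^{(M)}_{n+1}| \le 2\vertiii{\mathbf u^{(M)}}_0$ in the second term, and absorbing the first term to the left using $\theta(\epsilon) < 1$ for $0<\epsilon<\widetilde\epsilon$ (see~\eqref{eq:thetaest-hom}), leaves
\begin{equation*}
\vertiii{\mathbf u^{(M)} - \mathbf u^{\infty}}_\epsilon \le \frac{2\,\vertiii{\mathbf u^{(M)}}_0}{(p+q)\bigl(1 - \theta(\epsilon)\bigr)}\,\sup_n\left(\left|q - q^{(M)}_n\right| e^{-n\epsilon}\right).
\end{equation*}

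The decisive step, and the only genuine point of departure from Section~\ref{sec:proof-Niu}, is evaluating this final supremum. By~\eqref{eq:odesys-q_n-M=infinity} we have $q^{(M)}_n = q$ for every $n \ne M$ and $q^{(M)}_M = 0$, so the difference $|q - q^{(M)}_n|$ \emph{vanishes for all $n$ except $n = M$}, where it equals $q$. Consequently $\sup_n \left(|q - q^{(M)}_n| e^{-n\epsilon}\right) = q\,e^{-M\epsilon}$, in sharp contrast to the complete network, where $q^{(M)}_n$ deviates from $q$ at every $n \le M$ and the supremum decays only like $1/M$. Combining this with $\vertiii{\mathbf u^{(M)}}_0 \le 1$ gives $\vertiii{\mathbf u^{(M)} - \mathbf u^{\infty}}_\epsilon \le \frac{2q}{(p+q)(1 - \theta(\epsilon))}\,e^{-M\epsilon} = O(e^{-M\epsilon})$, which is~\eqref{eq:limu-circle}. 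The contraction mechanism is identical to the homogeneous case, so there is no real analytic obstacle; the conceptual heart of the matter is simply recognizing that the perturbation $q - q^{(M)}_n$ is supported at the single index $n=M$, which is exactly what makes the rate exponential rather than polynomial.
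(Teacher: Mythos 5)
Your proposal is correct and follows essentially the same route as the paper's own proof: the same uniform bound $\vertiii{\mathbf u^{(M)}}_0\le 1$, the same subtraction and integrating factor $e^{(np+q)t}$, the same weighted-norm contraction with the identical $\theta(\epsilon)=\frac{qe^\epsilon}{p+q}$, and the same observation that the perturbation $q-q^{(M)}_n$ is supported only at $n=M$, yielding $\sup_n\bigl(|q-q^{(M)}_n|e^{-n\epsilon}\bigr)=qe^{-M\epsilon}$. No gaps.
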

		
		\begin{proof}
			Since $\{u^{(M)}_n\}_{n=1}^M$ are probabilities, see~\eqref{eq:u^M_n=[S^n](M)}, they are bounded between~0 and~1. In addition, $\{u^{(M)}_n\}_{n=M+1}^\infty$ are given by~\eqref{eq:u_n^(M)-for-n>M}, and so are also bounded between~0 and~1. Therefore, we have the uniform bound
			\begin{equation}		
				\label{eq:ubnd-circle}		
				\vertiii{{\mathbf  u}^{(M)}}_0 \le 1.		
			\end{equation}
			
			Subtracting \eqref{eq:limsys} from \eqref{eq:odesys-M} yields
			\begin{equation*}		
				\begin{aligned}			
					\frac{d}{dt} \left(u^{(M)}_n-u_n^{\infty}\right)&+(np+q)\left(u^{(M)}_n-u_n^{\infty}\right)			
					\\&\qquad=q\left(u^{(M)}_{n+1}-u_{n+1}^{\infty}\right)+\left(q-q^{(M)}_n\right)			
					\left(u^{(M)}_n-u^{(M)}_{n+1}\right).		
				\end{aligned}		
			\end{equation*}
			Multiplying 
			by the integrating factor $e^{(np+q)t}$, integrating from zero to~$t$, and using $u^{(M)}_n(0)=u_n^{\infty}(0)$, yields
			
			\begin{align}		
				\label{eq:diffinteq-circle}				
				u^{(M)}_n(t)-u_n^{\infty}(t)&=q \int_0^t e^{-(np+q)(t-s)}\left(u^{(M)}_{n+1}(s)-u_{n+1}^{\infty}(s)\right)\,ds			
				\\&\qquad+ \left(q-q^{(M)}_n\right)\int_0^t e^{-(np+q)(t-s)}\left(u^{(M)}_n(s)-u^{(M)}_{n+1}(s)\right)\,ds.	\nonumber		
			\end{align}
			Let $0<\epsilon<\widetilde \epsilon$. Multiplying both sides of the infinite system~\eqref{eq:diffinteq-circle} by~$e^{-n\epsilon}$, and taking the supremum over $n,t$
			gives
			
			\begin{equation*}		
				\begin{aligned}			
					&\vertiii{\mathbf  u^{(M)}-\mathbf  u^{\infty}}_\epsilon			
					\le			
					\sup_{t,n}e^{\epsilon} q  \int_0^t e^{-(np+q)(t-s)} \vertiii{\mathbf  u^{(M)}-\mathbf  u^\infty}_\epsilon\,ds			
					\\&\qquad\qquad\qquad\qquad+\sup_{t,n} e^{-n\epsilon} \left|q-q^{(M)}_n\right| \int_0^t e^{-(np+q)(t-s)}  2\vertiii{\mathbf  u^{(M)}}_0 \,ds			
					\\			
					&\quad=\sup_{t,n} \frac{q e^{\epsilon}(1-e^{-(np+q)t})}{np+q}\vertiii{\mathbf  u^{(M)}-\mathbf  u^\infty}_\epsilon			
					\\&\qquad\qquad+2 \vertiii{\mathbf  u^{(M)}}_0 \sup_{t,n} \frac{ \left|q-q^{(M)}_n\right|e^{-n\epsilon}(1-e^{-(np+q)t})}{np+q}			
					\\&\quad\le \theta(\epsilon) \vertiii{\mathbf  u^{(M)}-\mathbf  u^\infty}_\epsilon+		
					2 \frac{\sup_n  \left(\left|q-q^{(M)}_n\right|e^{-n\epsilon}\right)}{p+q},		
				\end{aligned}	
			\end{equation*}
			where in the last inequality we used~\eqref{eq:ubnd-circle}. Since $\theta(\epsilon)<1$, 
			see~\eqref{eq:thetaest-hom}, then 
			
			\begin{equation}		
				\label{eq:diffest2-circle}		
				\vertiii{\mathbf  u^{(M)}-\mathbf  u^\infty}_\epsilon\le \frac{2 }{(p+q) 
					(1-\theta(\epsilon))} \sup_n \left(\left|q-q^{(M)}_n\right|e^{-n\epsilon}\right).		
			\end{equation}
			By the definition of~$q^{(M)}_n$, see~\eqref{eq:odesys-q_n-M=infinity},
			\begin{equation}	
				\label{eq:q-q^m=O(e-M)}	
				\sup_n \left(\left|q-q^{(M)}_n\right|e^{-n\epsilon}\right)=\left|q-q^{(M)}_M\right|e^{-M\epsilon}=qe^{-M\epsilon}.
			\end{equation}
			Relation~\eqref{eq:limu-circle} follows from~\eqref{eq:diffest2-circle} and~\eqref{eq:q-q^m=O(e-M)}.
		\end{proof}
		
		\begin{corollary}
			\label{cor:Niu-circle}
			For any $n\in\NN$ and $0<\epsilon<\tilde{\epsilon}$, $u^{(M)}_n(t)\to u_n^\infty(t)$ uniformly in $t$ as $M\to\infty$, and  $u^{(M)}_n(t)- u_n^\infty(t)  = O\left(e^{-M\epsilon}\right)$ as $M \to \infty$.
		\end{corollary}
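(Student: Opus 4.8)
The plan is to follow exactly the route that takes Corollary~\ref{cor:Niu-hom} from Theorem~\ref{thm:Steve-hom}, since the circle analogue Lemma~\ref{thm:Steve} has already done all the analytic work; what remains is only to translate the weighted-norm estimate~\eqref{eq:limu-circle} into a bound on each fixed component $u^{(M)}_n$.

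First I would fix $n \in \NN$ and $0 < \epsilon < \widetilde\epsilon$, and read off directly from the definitions of $\|\cdot\|_\epsilon$ and $\vertiii{\cdot}_\epsilon$ in~\eqref{eq:thetadef} the chain of pointwise-in-$t$ inequalities
\begin{equation*}
    e^{-\epsilon n}\left|u^{(M)}_n(t) - u^\infty_n(t)\right| \le \left\|\mathbf{u}^{(M)}(t) - \mathbf{u}^\infty(t)\right\|_\epsilon \le \vertiii{\mathbf{u}^{(M)} - \mathbf{u}^\infty}_\epsilon,
\end{equation*}
where the first inequality is the fact that the $n$-th weighted term is dominated by the supremum over all terms, and the second is the definition of $\vertiii{\cdot}_\epsilon$ as the supremum over $t \ge 0$. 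Multiplying by $e^{\epsilon n}$ then gives $\left|u^{(M)}_n(t) - u^\infty_n(t)\right| \le e^{\epsilon n}\,\vertiii{\mathbf{u}^{(M)} - \mathbf{u}^\infty}_\epsilon$ for every $t$, with the constant $e^{\epsilon n}$ independent of both $t$ and $M$.

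Next I would invoke Lemma~\ref{thm:Steve}, which gives $\vertiii{\mathbf{u}^{(M)} - \mathbf{u}^\infty}_\epsilon = O(e^{-M\epsilon})$ as $M \to \infty$. Because $n$ is fixed, the prefactor $e^{\epsilon n}$ is an $M$-independent constant, so the right-hand side is again $O(e^{-M\epsilon})$. This delivers at once the uniform-in-$t$ convergence $u^{(M)}_n \to u^\infty_n$ and the exponential rate $u^{(M)}_n(t) - u^\infty_n(t) = O(e^{-M\epsilon})$ claimed in the statement.

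I do not expect any genuine obstacle here: all of the difficulty---existence of solutions of the infinite embedded system~\eqref{eq:odesys-M-embed-infinity-circle}, the contraction-type estimate~\eqref{eq:diffest2-circle}, and the exact evaluation of $\sup_n \left|q - q^{(M)}_n\right| e^{-n\epsilon}$ in~\eqref{eq:q-q^m=O(e-M)}---has already been absorbed into Lemma~\ref{thm:Steve}. The corollary is merely the passage from the weighted $\vertiii{\cdot}_\epsilon$ norm back to individual coordinates, and is the verbatim circle counterpart of the proof of Corollary~\ref{cor:Niu-hom}.
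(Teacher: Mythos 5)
Your proposal is correct and is exactly the paper's argument: the paper proves this corollary by citing the proof of Corollary~\ref{cor:Niu-hom}, i.e., the componentwise bound $\left|u^{(M)}_n(t)-u^\infty_n(t)\right|\le e^{n\epsilon}\vertiii{\mathbf{u}^{(M)}-\mathbf{u}^\infty}_\epsilon$ followed by the estimate~\eqref{eq:limu-circle} from Lemma~\ref{thm:Steve}. No gaps.
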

		\begin{proof}
			Same as Corollary~\ref{cor:Niu-hom}, only with the convergence estimate~\eqref{eq:limu-circle}.
		\end{proof}

		\section{Heterogeneous network with $K$ homogeneous groups}
		\label{sec:compartmental}
		
		We now consider a heterogeneous population that consists of $K$ groups, each of which is homogeneous. 
		This situation arises, e.g., when we divide the population according to age groups,
		levels of income, gender, etc. 
		
		\subsection{Description of the model}
		
		The network consists of $K$ groups. Any node in group $k$ has external influence $p_k> 0$, any adopter in group $m$ has internal influence $\frac{q_{m,k}}{M-1}\geq 0$ on any nonadopter in group $k$, and the parameters $\{p_j\}$ and $\{q_{m,j}\}$ 
		are assumed to satisfy
		\begin{equation}
			\label{eq:pqge0}
			p_j>0, \quad q_{m,j}\ge0 \qquad\text{for $m,j\in\{1,\cdots,K\}$.}
		\end{equation}
		Group $k$ has $M_k>0$ nodes, and so $\sum_{k=1}^{K}M_k=M$. 
		
		The discrete model~\eqref{eq:discrete_model} for node $k_n$ in group $k$, where $k=1,\dots,K$ and $n=1,\dots,M_k$, becomes
		\begin{equation}
			\label{eq:discrete_group_model}
			{\rm Prob}(X_{k_n}(t+\Delta t)=1\ |\ {\bf X}(t))=\begin{cases}
				\hfill 1,\hfill &\qquad {\rm if}\ X_{k_n}(t)=1,\\
				\left(p_{k}+\sum\limits_{m=1}^K\frac{q_{m,k}}{M-1}N_{m}(t)\right)\Delta t, &\qquad {\rm if}\ X_{k_n}(t)=0,
			\end{cases}
		\end{equation}
		where $N_m(t)$ is the number of adopters in group $m$.
		The corresponding {\em heterogeneous compartmental Bass model} reads, cf.~\eqref{eq:Bass_intro},
		\begin{equation}
			\label{eq:heterogeneous_Bass_n}
			\frac{d}{dt}n_k(t)=(M_k-n_k)\left(p_k+\sum_{m=1}^{K}\frac{q_{m,k}}{M}n_m\right), \qquad n_k(0)=0, \qquad 1\leq k\leq K,
		\end{equation}
		where $n_k$ is the number of adopters in group $k$.
		Although complete consistency with \eqref{eq:discrete_group_model} would require dividing $q_{m,k}$ in \eqref{eq:heterogeneous_Bass_n} by $M-1$ rather than $M$, the use of $M$ as the
		denominator is traditional in Bass models, going back to Bass~\cite{Bass-69}. In any case, the difference becomes $O\left(\frac1M\right)$ as $M\to\infty$.
		
		Let us assume that the following limits exist and are positive: 
		\begin{equation}
			\label{eq:a_k_def}
			a_k:=\lim\limits_{M\to\infty}\frac{M_k}{M}, \qquad k=1,\ldots,K.
		\end{equation}
		Then $a_k>0$ and $\sum_{k=1}^{K}a_k=1$. Moreover, we assume that
		\begin{equation}
			\label{eq:pop_frac_lim}
			\frac{M_k}{M}-a_k=O\left(\frac1{M}\right) \qquad \text{as } M\to\infty, \qquad 1\leq k\leq K.
		\end{equation}
		Relation~\eqref{eq:pop_frac_lim} holds trivially when $K=1$. For $K\geq 2$, it holds if, e.g., $M_k=\lfloor a_k M\rfloor$ for $1\le k\le K-1$ and
		$M_K=M-\sum_{k=1}^{K-1}M_k$.
		
		Let $f_k:=\frac{n_k}{M}$ denote the fraction within the population of group-$k$ adopters. Then \eqref{eq:heterogeneous_Bass_n}
		can be rewritten as
		\begin{equation}
			\label{eq:heterogeneous_Bass_M}
			\frac{d}{dt}f_k(t)=(\tfrac{M_k}{M}-f_k)\left(p_k+\sum_{m=1}^{K}q_{m,k}f_m\right), \qquad f_k(0)=0, \qquad 1\leq k\leq K.
		\end{equation}
		Since the number of equations in~\eqref{eq:heterogeneous_Bass_M} remains fixed as $M\to\infty$, assumption~\eqref{eq:a_k_def}, together with standard results on the continuous dependence of solutions of systems of ODEs
		on parameters and the fact that $f_k$ tends to the constant value~$\frac{M_k}M$ as $t\to\infty$,
		imply that the solution of~\eqref{eq:heterogeneous_Bass_M} tends uniformly in time as $M$ tends to infinity to the
		solution of 
		\begin{equation}
			\label{eq:heterogeneous_Bass}
			\frac{d}{dt}f_k(t)=(a_k-f_k)\left(p_k+\sum_{m=1}^{K}q_{m,k}f_m\right), \qquad f_k(0)=0, \qquad 1\leq k\leq K,
		\end{equation}
		and assumption~\eqref{eq:pop_frac_lim} implies that the rate of convergence is $O(\frac{1}{M})$.
		
		When $K=1$, both~\eqref{eq:heterogeneous_Bass_M} and~\eqref{eq:heterogeneous_Bass} reduce to~\eqref{eq:homogeneous_Bass}.
		Since the solution $f_k$ of \eqref{eq:heterogeneous_Bass} satisfies $0\le f_k\le a_k$, the maximal influence on a nonadopter from group $k$ is, c.f.~\eqref{eq:q_on_node}, 
		\begin{equation}
			\label{eq:max_q_compartmental}
			q_k:=\sum_{m=1}^{K}a_mq_{m,k}.
		\end{equation} 
		
		\subsection{Convergence and rate of convergence}
		\label{subsec:compartmental_convergence}
		We can use our method to prove the convergence and the rate of convergence of the $K$-groups heterogeneous discrete Bass model to the heterogeneous compartmental Bass model.
		
		\begin{theorem}
			\label{thm:fc_het_M->infinity}
			Let~\eqref{eq:pqge0} and~\eqref{eq:pop_frac_lim} hold. Then the expected fraction of adopters in the discrete Bass model~$(\ref{eq:general_initial},\ref{eq:discrete_group_model})$ on a heterogeneous network with $K$ groups approaches that of the corresponding heterogeneous compartmental Bass model~\eqref{eq:heterogeneous_Bass} as $M\to\infty$, i.e.,
			\begin{equation}
				\label{eq:disc_to_compartmental_lim}
				\lim\limits_{M\to\infty}f^{\rm heter}_{\rm discrete}(t;\{p_k\},\{q_{i,k}\},\{a_k\},M)=f^{\rm heter}_{\rm Bass}(t;\{p_k\},\{q_{i,k}\},\{a_k\}),
			\end{equation}
			where $f^{\rm heter}_{\rm Bass}:=\sum_{k=1}^{K}f_k$,
			$\{f_k\}$ are the solutions of~\eqref{eq:heterogeneous_Bass}, and the limit is uniform in~$t$. Moreover, the {\em rate of convergence} is $\frac1{M}$, i.e.,
			\begin{equation}
				\label{eq:f_Bass-rate-of-convergence}
				f^{\rm heter}_{\rm Bass}(t)-f^{\rm heter}_{\rm discrete}(t)= O\left(\frac1{M}\right), \qquad M \to \infty.
			\end{equation}
		\end{theorem}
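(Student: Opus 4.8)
The plan is to follow the same three-step scheme used for the complete network (Theorem~\ref{thm:Niu}) and the circle (Theorem~\ref{thm:circle_conv}): reduce the master equations by symmetry, embed the reduced finite system in an infinite one and prove convergence with a weighted-norm estimate, and finally close the infinite limit system by a product ansatz that recovers the compartmental model. Since the network in~\eqref{eq:discrete_group_model} is complete and all nodes of a given group are interchangeable, the master equations~\eqref{eqs:masterHeterofc} are invariant under permutations within each group. Writing $\vec n=(n_1,\dots,n_K)$ with $0\le n_k\le M_k$, I would let $[S^{\vec n}](t;M)$ denote the probability that some $n_k$ nodes of group $k$, for each $k$, are all nonadopters. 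Inserting this symmetry and the weights of~\eqref{eq:discrete_group_model} into~\eqref{eqs:masterHeterofc} collapses the full system to the closed finite system
\begin{equation*}
\frac{d}{dt}[S^{\vec n}]=-\Big(\sum_{k}n_k p_k+\sum_{m}\tfrac{M_m-n_m}{M-1}\sum_k n_k q_{m,k}\Big)[S^{\vec n}]+\sum_{m}\tfrac{M_m-n_m}{M-1}\Big(\sum_k n_k q_{m,k}\Big)[S^{\vec n+e_m}],
\end{equation*}
with $[S^{\vec n}](0)=1$ and $e_m$ the $m$-th unit multi-index. The factor $\tfrac{M_m-n_m}{M-1}$ vanishes at $n_m=M_m$, the analogue of~\eqref{eq:M+1term}, which is what keeps this system closed. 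Fixing $\vec n$, letting $M\to\infty$, using~\eqref{eq:a_k_def}, and writing $q_k=\sum_m a_m q_{m,k}$ as in~\eqref{eq:max_q_compartmental}, gives the limiting infinite system
\begin{equation*}
\frac{d}{dt}[S^{\vec n}_\infty]=-\Big(\sum_k n_k(p_k+q_k)\Big)[S^{\vec n}_\infty]+\sum_m a_m\Big(\sum_k n_k q_{m,k}\Big)[S^{\vec n+e_m}_\infty],\qquad [S^{\vec n}_\infty](0)=1.
\end{equation*}

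To close the limit system I would use the product ansatz $[S^{\vec n}_\infty]=\prod_{k=1}^K [S_{\infty,k}]^{n_k}$, the multigroup analogue of~\eqref{eq:inf_sol_hom}. Substituting and matching the coefficient of each $n_k$ reduces the infinite system to the $K$ coupled scalar ODEs $\frac{d}{dt}[S_{\infty,k}]=-[S_{\infty,k}]\big(p_k+\sum_m a_m q_{m,k}(1-[S_{\infty,m}])\big)$ with $[S_{\infty,k}](0)=1$, and the substitution $f_k:=a_k(1-[S_{\infty,k}])$ turns these into exactly~\eqref{eq:heterogeneous_Bass}; thus $[S_{\infty,k}]=1-f_k/a_k$ with $\{f_k\}$ the compartmental solution. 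Since $\E[X_{k_n}]=1-[S^{e_k}]$, the discrete fraction is $f^{\rm heter}_{\rm discrete}=\sum_k\tfrac{M_k}{M}(1-[S^{e_k}])$ while $f^{\rm heter}_{\rm Bass}=\sum_k a_k(1-[S_{\infty,k}])$, so the theorem follows once I establish $[S^{e_k}](t;M)-[S_{\infty,k}](t)=O(1/M)$ uniformly in $t$ and combine it with~\eqref{eq:pop_frac_lim} through the splitting $f^{\rm heter}_{\rm Bass}-f^{\rm heter}_{\rm discrete}=\sum_k(a_k-\tfrac{M_k}{M})(1-[S_{\infty,k}])+\sum_k\tfrac{M_k}{M}([S^{e_k}]-[S_{\infty,k}])$.

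The convergence estimate is the multi-index version of Theorem~\ref{thm:Steve-hom} and Lemma~\ref{thm:Steve}, in the notation $u^{(M)}_{\vec n}:=[S^{\vec n}]$, $u^\infty_{\vec n}:=[S^{\vec n}_\infty]$. I would embed the finite system into an infinite one over $\{\vec n\in\NN^K:|\vec n|\ge1\}$ by setting $q^{(M)}_m(\vec n):=\tfrac{M_m-n_m}{M-1}$ when $0\le n_k\le M_k$ for all $k$, and $q^{(M)}_m(\vec n):=a_m$ otherwise. On the block $B$ where all $n_k\le M_k$ these coefficients reproduce the reduced master equations, and because $q^{(M)}_m=0$ at $n_m=M_m$ this block is decoupled from the rest; the complementary region is upward-closed under $\vec n\mapsto\vec n+e_m$ and carries the limit coefficients, so there the embedded and limit solutions coincide (as for $n>M$ in the earlier cases), and the explicit product solution shows both infinite systems are solvable with all components in $[0,1]$ (recall $0\le f_k\le a_k$), giving $\vertiii{\mathbf u^{(M)}}_0\le1$. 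With the weighted norm $\|\mathbf v\|_\epsilon:=\sup_{|\vec n|\ge1}e^{-\epsilon|\vec n|}|v_{\vec n}|$, $|\vec n|:=\sum_k n_k$, subtracting the two systems, multiplying by the integrating factor $e^{(\sum_k n_k(p_k+q_k))t}$, integrating, and taking $\vertiii{\cdot}_\epsilon$ yields, exactly as in~\eqref{eq:diffinteq-hom}--\eqref{eq:diffest2-hom}, the self-improving bound $\vertiii{\mathbf u^{(M)}-\mathbf u^\infty}_\epsilon\le\theta_{\rm het}(\epsilon)\vertiii{\mathbf u^{(M)}-\mathbf u^\infty}_\epsilon+(\text{source})$, where the contraction factor is
\begin{equation*}
\theta_{\rm het}(\epsilon)=e^\epsilon\sup_{\vec n}\frac{\sum_k n_k q_k}{\sum_k n_k(p_k+q_k)}=e^\epsilon\max_{1\le k\le K}\frac{q_k}{p_k+q_k},
\end{equation*}
which is $<1$ exactly for $0<\epsilon<\min_k\ln(1+p_k/q_k)$, the multigroup form of~\eqref{eq:eps_tilde}, since each $p_k>0$.

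The main obstacle, and the one genuinely new estimate, is the source term, because the discrepancy $a_m-q^{(M)}_m(\vec n)=\big(a_m-\tfrac{M_m}{M-1}\big)+\tfrac{n_m}{M-1}$ now carries an $n_m$-dependent piece that is absent in~\eqref{eq:q-q^m=O(1/M)-hom}. I would tame it in two stages. First, bound the prefactor uniformly in $\vec n$ by $\frac{\sum_k n_k q_{m,k}}{\sum_k n_k(p_k+q_k)}\le\frac{\max_k q_{m,k}}{\min_k p_k}$, where $\min_k p_k>0$ by~\eqref{eq:pqge0} is what makes this finite. Second, estimate $\sup_{\vec n}e^{-\epsilon|\vec n|}|a_m-q^{(M)}_m(\vec n)|$ over the block $B$ (it vanishes elsewhere): the constant part is $O(1/M)$ by~\eqref{eq:pop_frac_lim}, and the $n_m$-part is controlled just as in the homogeneous case via $\sup_{\vec n}n_m e^{-\epsilon|\vec n|}\le\sup_{s\ge0}s\,e^{-\epsilon s}=\tfrac1{e\epsilon}<\infty$, so the source is $O(1/M)$. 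Since $\theta_{\rm het}(\epsilon)<1$ this yields $\vertiii{\mathbf u^{(M)}-\mathbf u^\infty}_\epsilon=O(1/M)$, and the pointwise bound $|[S^{e_k}]-[S_{\infty,k}]|\le e^{\epsilon}\vertiii{\mathbf u^{(M)}-\mathbf u^\infty}_\epsilon$ (as in Corollary~\ref{cor:Niu-hom}) together with the splitting above gives~\eqref{eq:disc_to_compartmental_lim} and~\eqref{eq:f_Bass-rate-of-convergence}.
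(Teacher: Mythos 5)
Your proposal is correct and follows essentially the same route as the paper: the same symmetry reduction of the master equations to the multi-index system \eqref{eqs:pode}, the same embedding \eqref{eq:odesys-M-embed-infinity} with coefficients set to their limits outside the finite block, the same weighted-norm contraction argument with the source term split into a constant $O(1/M)$ piece (via \eqref{eq:pop_frac_lim}) and an $n_m/(M-1)$ piece tamed by $\sup_{s\ge0}s e^{-\epsilon s}$ exactly as in Lemma~\ref{lem:q-q^m=O(1/M)}, the same product ansatz \eqref{eqs:subbed_equation}, and the same final splitting via \eqref{eqs:substitutions}. The only (harmless) difference is cosmetic: your contraction constant $e^{\epsilon}\max_k q_k/(p_k+q_k)$ is a slightly sharper version of the paper's $\theta(\epsilon)=Ae^{\epsilon}/(\min_j p_j+A)$ from Lemma~\ref{lem:elem}.
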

		\begin{proof}
			Let $\vec{M}:=\left(M_1,\ldots,M_K\right)^T$, $\vec k:=(k_1,\cdots, k_K)^T$,
			\begin{equation}
				\label{eq:n_def}
				n(\vec k):=\sum_{j=1}^{K}k_j,	
			\end{equation}
			and
			\begin{equation*}
				\vec K^{\vec M}:=\{\vec k\ |\ k_j\in\{0,\dots,M_j\}, \quad 1\leq n(\vec k)\leq M\}.
			\end{equation*}
			
			Let $u_{\vec k}(t)$ denote the probability that a set $\{m_1,\dots,m_n\}$ of $n=n(\vec k)$ consumers which contains $k_j$ members from group $j$ for $1\leq j\leq K$, are all non-adopters at time~$t$. The master equations~\eqref{eqs:masterHeterofc} reduce to
			\begin{subequations}
				\label{eqs:pode}
				\begin{equation}
					\label{eq:pode}
					\tddt u_{\vec k}(t)=-\left(\vec k\cdot \vec p + \frac{\left(\vec{M}-\vec{k}\right)}{M-1}^TQ{\vec k}\right) u_{\vec k}
					+\frac{\left(\vec{M}-\vec{k}\right)}{M-1}^TD\left(u_{\vec k+\vec e_1},\cdots,u_{\vec k+\vec e_K}\right)Q{\vec k},
				\end{equation}
				for $1\leq n(\vec k)\leq M-1$, and
				\begin{equation}
					\label{eq:podeM}
					\tddt u_{\vec M}(t)=-\left(\vec{M}\cdot \vec{p}\right) u_{\vec M},
				\end{equation}
				subject to
				\begin{equation}
					\label{eq:init}
					u_{\vec k}(0)=1, \qquad \vec k\in \vec K^{\vec M},
				\end{equation}
				where $\vec e_i$ is the unit vector in the $i$th coordinate,
				\begin{gather*}
					\label{eq:Q}
					Q=\left(q_{i,j}\right)=\begin{pmatrix}q_{1,1}&\cdots& q_{1,K}\\\vdots &\ddots&\vdots\\ q_{K,1}&\cdots&q_{K,K}\end{pmatrix},
					\qquad
					D\left(\lambda_1,\cdots, \lambda_n\right)=
					\begin{pmatrix}
						\lambda_1&&\text{\large $0$}\\ &\ddots\\\text{\large $0$}&&  \lambda_n
					\end{pmatrix}.
				\end{gather*}
			\end{subequations}
			
			If we formally fix~$\vec k$, let $M\to\infty$ in~\eqref{eqs:pode}, and use~\eqref{eq:a_k_def}, we get that
			\begin{equation}
				\label{eq:master_inf}
				\tddt u_{\vec k}^{(\infty)}=-\left(\vec k\cdot \vec p + \vec a^T Q{\vec k}\right) u_{\vec k}^{(\infty)}
				+ \vec a^T   D\left(u_{\vec k+\vec e_1}^{(\infty)},\cdots,u_{\vec k+\vec e_K}^{(\infty)}\right)Q{\vec k}, \qquad u_{\vec k}^{(\infty)}(0)=1,
			\end{equation}
			where $\vec a:=\left(a_1,\ldots,a_K\right)^T$. This does not immediately imply that
			\begin{equation}
				\label{eq:S^M->S-complete}
				\lim_{M \to \infty} u_{\vec k} =  u_{\vec k}^{(\infty)},
			\end{equation}
			since the number of ODEs in~\eqref{eqs:pode} increases with~$M$, and becomes infinite in the limit. In Lemma~\ref{lem:Steve-convergence-het} below, however,
			we will prove that the limit~\eqref{eq:S^M->S-complete} holds for any $\vec k\in\vec K^{\infty}$, where
			\begin{equation*}
				\vec K^{\infty}:=\{\vec k\ |\ k_j\in\{0,1,2,\dots\}, \quad 1\leq n(\vec k)<\infty\}.
			\end{equation*}
			Therefore, we can proceed to solve the infinite system~\eqref{eq:master_inf}. 
			
			Let
			\begin{subequations}
				\label{eqs:subbed_equation}
				\begin{equation}
					\label{eq:inf_sol_fc}
					u_{\vec k}^{(\infty)}(t):=\prod_{j=1}^K \left({u_{\vec e_j}^{(\infty)}}(t)\right)^{k_j},
				\end{equation}
				where
				\begin{equation}
					\label{eq:subbed_equation}
					\tddt u_{\vec e_j}^{(\infty)}(t)=-\left(p_j + \sum_{i=1}^{K}a_iq_{i,j}\left(1-u_{\vec e_i}^{(\infty)}\right)\right) u_{\vec e_j}^{(\infty)}, \qquad u_{\vec e_j}^{(\infty)}(0)=1, \quad j=1,\dots,K.
				\end{equation}
			\end{subequations}	
			It can be verified by direct substitution that~\eqref{eqs:subbed_equation} satisfies~\eqref{eq:master_inf} for any $\vec k\in\vec K^{\infty}$. 
			
			Let
			\begin{equation*}
				\label{eq:system_equivalence}
				f_j:=a_j\left(1-u_{\vec e_j}^{(\infty)}\right), \qquad j=1,\dots,K.
			\end{equation*}
			By~\eqref{eq:subbed_equation}, $\{f_j\}_{j=1}^K$ satisfy the heterogeneous compartmental Bass model~\eqref{eq:heterogeneous_Bass}. Hence,
			\begin{subequations}
				\label{eqs:substitutions}
				\begin{equation}
					\label{eq:sub_1}
					\sum_{k=1}^{K}a_ku_{\vec e_k}^{(\infty)}=\sum_{k=1}^{K}\left(a_k-f_k\right)=1-f^{\rm heter}_{\rm Bass}.
				\end{equation}
				In addition, by~\eqref{eq:expected_frac},
				\begin{equation}
					\label{eq:sub_2}
					\sum_{k=1}^{K}\frac{M_k}{M}u_{\vec e_k}=\frac{1}{M}\sum_{k=1}^{M}[S_k]=1-f^{\rm heter}_{\rm discrete}.
				\end{equation}
			\end{subequations}
			Therefore,~\eqref{eq:disc_to_compartmental_lim} and~\eqref{eq:f_Bass-rate-of-convergence} follow from~\eqref{eqs:substitutions},~\eqref{eq:pop_frac_lim}, and Lemma~\ref{lem:Steve-convergence-het}.
		\end{proof}
		The $O\left(\frac1M\right)$ rate of convergence predicted in Theorem~\ref{thm:fc_het_M->infinity} is illustrated numerically in Figure~\ref{fig:convergence}, where we observe that $f^{\rm heter}_{\rm Bass}-f^{\rm heter}_{\rm discrete}\approx\frac{C}{M^{0.96}}$. Here $f^{\rm heter}_{\rm discrete}(t;\{p_k\},\{q_{m,k}\},M)$ was calculated from $10^6$ simulations of~$(\ref{eq:general_initial},\ref{eq:discrete_group_model})$.
		\begin{figure}[ht!]
			\begin{center}
				\scalebox{1}{\includegraphics{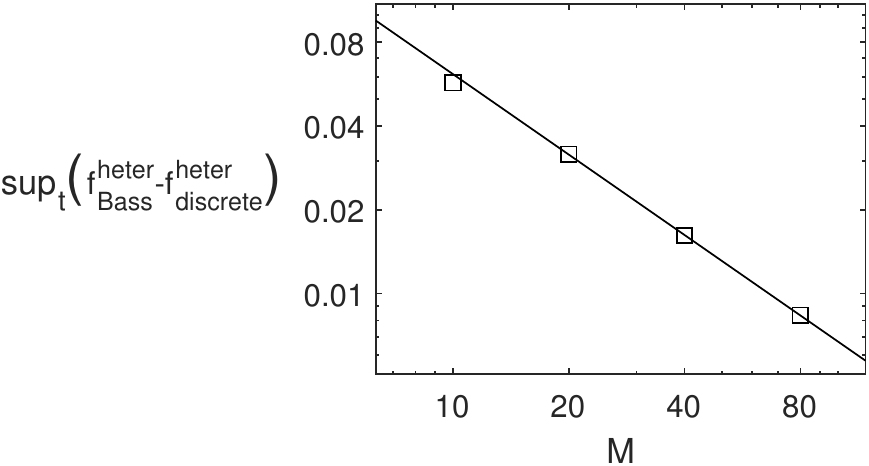}}
				\caption{Log-log plot of $\sup_{t}\left(f^{\rm heter}_{\rm Bass}-f^{\rm heter}_{\rm discrete}\right)$ as a function of $M$. The fitted solid line is $\log y=-0.96\log M-0.58$. Here, $K=4$, $\{a_1, a_2, a_3, a_4\}=\{0.4, 0.1, 0.3, 0.2\}$, $\{p_1, p_2, p_3, p_4\}=\{0, 0.02, 0.04, 0.01\}$, and \usebox{\smlmat}.}
				\label{fig:convergence}
			\end{center}
		\end{figure}
		\subsection{Proof of Lemma~\ref{lem:Steve-convergence-het}} 
		\label{sec:convergence}
		As noted, the proof of Theorem~\ref{thm:fc_het_M->infinity} makes use of the following result:
		\begin{lemma}
			\label{lem:Steve-convergence-het}
			For any $\vec k\in\vec K^{\infty}$, the solution $u_{\vec k}(t)$  of~\eqref{eqs:pode}
			converges, uniformly in~$t$, to the solution $u_{\vec k}^{(\infty)}(t)$   of~\eqref{eq:master_inf} as $M \to \infty$. Moreover, 
			\begin{equation}
				\label{eq:u_rate_convergence}
				u_{\vec k}-u_{\vec k}^{(\infty)}=O\left(\frac1{M}\right), \qquad M\to\infty.
			\end{equation}
			
		\end{lemma}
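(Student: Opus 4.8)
The plan is to reproduce, essentially line by line, the embedding-and-weighted-norm argument of Section~\ref{sec:proof-Niu}, with the scalar level index~$n$ replaced by the multi-index~$\vec k\in\vec K^{\infty}$ and the weight $e^{-\epsilon n}$ replaced by $e^{-\epsilon n(\vec k)}$, where $n(\vec k)$ is given by~\eqref{eq:n_def}. First I would embed the finite system~\eqref{eqs:pode} into an infinite system over all $\vec k\in\vec K^{\infty}$ by replacing the finite coefficient vector $\tfrac{\vec M-\vec k}{M-1}$ with
\[
\vec a^{(M)}(\vec k)\eqdef
\begin{cases}
\tfrac{\vec M-\vec k}{M-1}, & \vec k\in\vec K^{\vec M},\\
\vec a, & \vec k\in\vec K^{\infty}\setminus\vec K^{\vec M}.
\end{cases}
\]
The key structural fact is that $a_i^{(M)}(\vec k)=\tfrac{M_i-k_i}{M-1}=0$ exactly when $k_i=M_i$, which is precisely when $\vec k+\vec e_i$ leaves $\vec K^{\vec M}$; since the coupling in~\eqref{eq:pode} always points ``upward'' from $\vec k$ to $\vec k+\vec e_i$, this is the multi-index analogue of the condition $q^{(M)}_M=0$ in~\eqref{eq:M+1term}. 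Consequently the components $\{u_{\vec k}^{(M)}\}_{\vec k\in\vec K^{\vec M}}$ of the embedded system solve the original finite system~\eqref{eqs:pode}, while the components with $\vec k\notin\vec K^{\vec M}$ have coefficient $\vec a$ and form a closed subsystem identical to~\eqref{eq:master_inf}, so that $u_{\vec k}^{(M)}=u_{\vec k}^{(\infty)}$ there. As in Section~\ref{sec:proof-Niu}, this reduces the lemma to estimating $\vertiii{\vec u^{(M)}-\vec u^{(\infty)}}_\epsilon$, where $\|\vec v\|_\epsilon\eqdef\sup_{\vec k}e^{-\epsilon n(\vec k)}|v_{\vec k}|$ and $\vertiii{\vec v}_\epsilon\eqdef\sup_{t\ge0}\|\vec v(t)\|_\epsilon$, cf.~\eqref{eq:thetadef}.

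Existence of solutions to both infinite systems follows from the explicit product ansatz~\eqref{eqs:subbed_equation} already used in the proof of Theorem~\ref{thm:fc_het_M->infinity}, and since every component is either a genuine probability or a product $\prod_j(u_{\vec e_j}^{(\infty)})^{k_j}$ of factors in $[0,1]$, we obtain the uniform bound $\vertiii{\vec u^{(M)}}_0\le1$. Subtracting~\eqref{eq:master_inf} from the embedded system and adding and subtracting $\vec a$, the difference $w_{\vec k}\eqdef u_{\vec k}^{(M)}-u_{\vec k}^{(\infty)}$ satisfies
\[
\tddt w_{\vec k}+\mu_{\vec k}\,w_{\vec k}
=\sum_{i=1}^K a_i\,(Q\vec k)_i\,w_{\vec k+\vec e_i}
+\sum_{i=1}^K\bigl(a_i^{(M)}(\vec k)-a_i\bigr)(Q\vec k)_i\bigl(u_{\vec k+\vec e_i}^{(M)}-u_{\vec k}^{(M)}\bigr),
\]
with $\mu_{\vec k}\eqdef\vec k\cdot\vec p+\vec a^TQ\vec k$ and $w_{\vec k}(0)=0$. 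This is the exact multi-index analogue of the difference equation preceding~\eqref{eq:diffinteq-hom}: the first sum is the principal coupling term and the second is the coefficient-error term.

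Multiplying by the integrating factor $e^{\mu_{\vec k}t}$, integrating from~$0$, multiplying by $e^{-\epsilon n(\vec k)}$, and taking the supremum over $\vec k,t$, exactly as in the derivation of~\eqref{eq:diffest2-hom}, the principal term contributes the factor
\[
\sup_{\vec k}\frac{e^{\epsilon}\,\vec a^TQ\vec k}{\mu_{\vec k}}
=\sup_{\vec k}\frac{e^{\epsilon}\sum_j q_j k_j}{\sum_j(p_j+q_j)k_j}
\le\max_j\frac{e^{\epsilon}q_j}{p_j+q_j}\eqdef\theta_{\rm het}(\epsilon),
\]
where $q_j=\sum_m a_m q_{m,j}$ is the maximal influence~\eqref{eq:max_q_compartmental} and the middle inequality is the elementary bound $\tfrac{\sum_j x_j k_j}{\sum_j y_j k_j}\le\max_j\tfrac{x_j}{y_j}$ for nonnegative weights. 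Thus $\theta_{\rm het}(\epsilon)<1$ precisely for $0<\epsilon<\widetilde\epsilon_{\rm het}\eqdef\min_{j:\,q_j>0}\ln(1+\tfrac{p_j}{q_j})$, the group-wise analogue of~\eqref{eq:eps_tilde}. For the error term I would use $|u_{\vec k+\vec e_i}^{(M)}-u_{\vec k}^{(M)}|\le2$, the bounds $(Q\vec k)_i\le(\max_{i,j}q_{i,j})\,n(\vec k)$ and $\mu_{\vec k}\ge(\min_j p_j)\,n(\vec k)$, and the estimate $|a_i^{(M)}(\vec k)-a_i|\le\tfrac{C(n(\vec k)+1)}{M}$, which follows from~\eqref{eq:pop_frac_lim} together with the explicit form of $a_i^{(M)}$ (the $\tfrac{M_i}{M-1}-a_i$ part being $O(\tfrac1M)$ and the $\tfrac{k_i}{M-1}$ part being $O(\tfrac{n(\vec k)}{M})$). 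Since $\sup_{n\ge1}(n+1)e^{-\epsilon n}<\infty$, the error term is $O(\tfrac1M)$, giving
\[
\vertiii{\vec w}_\epsilon\le\theta_{\rm het}(\epsilon)\vertiii{\vec w}_\epsilon+O\!\left(\tfrac1M\right),
\qquad\text{hence}\qquad
\vertiii{\vec w}_\epsilon=O\!\left(\tfrac1M\right).
\]
The pointwise rate and uniform-in-$t$ convergence then follow from $|w_{\vec k}(t)|\le e^{\epsilon n(\vec k)}\vertiii{\vec w}_\epsilon$, exactly as in Corollary~\ref{cor:Niu-hom}.

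The routine parts are identical to the homogeneous proof. The two steps requiring genuine care are, first, the verification that the embedding decouples correctly, i.e.\ that $a_i^{(M)}(\vec k)=0$ on the upper boundary of $\vec K^{\vec M}$ so that the finite system never couples to components outside $\vec K^{\vec M}$; and, second, establishing the contraction $\theta_{\rm het}(\epsilon)<1$, which no longer reduces to a single ratio but to the group-wise threshold $\widetilde\epsilon_{\rm het}$ via the weighted-average inequality. I expect the main obstacle to be the bookkeeping: because the coefficient error $|a_i^{(M)}(\vec k)-a_i|$ grows linearly in $n(\vec k)$ (just as $|q-q^{(M)}_n|$ did in the homogeneous case), the exponential weight $e^{-\epsilon n(\vec k)}$ is essential, and one must check that each $\vec k$-dependent factor, namely $(Q\vec k)_i/\mu_{\vec k}$ and $(n(\vec k)+1)e^{-\epsilon n(\vec k)}$, is bounded uniformly in $\vec k$.
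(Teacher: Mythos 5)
Your proposal is correct and follows essentially the same route as the paper: the same embedding via $\vec a(\vec M,\vec k)$ with the same decoupling observation, the same uniform bound $\vertiii{\mathbf u^{(M)}}_0\le1$, the same integrating-factor/weighted-norm contraction, and the same $O(1/M)$ estimate of the coefficient error split into the $\frac{M_i}{M-1}-a_i$ and $\frac{k_i}{M-1}$ parts. The only (immaterial) difference is your contraction constant $\max_j\frac{e^{\eps}q_j}{p_j+q_j}$ obtained via the mediant inequality, where the paper instead uses $\theta(\eps)=\frac{Ae^{\eps}}{\min_j p_j+A}$ with $A$ the supremum of $\vec a(\vec M,\vec k)Q\vec\omega$ over the simplex.
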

		\noindent This result is proved in Corollary~\ref{cor:Niu} below.
		
		\begin{remark}
			In some cases, it makes sense to replace~\eqref{eq:pop_frac_lim} with a more general convergence rate 
			\begin{equation}
				\label{eq:pop_frac_lim-r}
				\frac{M_k}{M}-a_k=O\left(\frac1{M^{r}}\right) \qquad \text{as } M\to\infty, \qquad 1\leq k\leq K.
			\end{equation}
			For example, if each person is assigned randomly to group $k$ with
			probability~$a_k$ then large deviations theory together with the Borel-Cantelli lemma imply that estimate
			\eqref{eq:pop_frac_lim-r} holds almost surely for any $r<\frac12$.
			In that case, the rate of convergence~\eqref{eq:f_Bass-rate-of-convergence} and~\eqref{eq:u_rate_convergence} changes from 
			$1/M$ to $1/M^{\min(1,r)}$.
		\end{remark}
		We now rigorously show that for any $\vec k \in \vec K^{\infty}$, the solution of~\eqref{eqs:pode} approaches the solution of~\eqref{eq:master_inf} as $M \to \infty$. Similarly to the homogeneous case~(Section~\ref{sec:proof-Niu}), we first embed the finite system~\eqref{eqs:pode} into the infinite system 
		\begin{subequations}
			\label{eq:odesys-M-embed-infinity}
			\begin{equation}
				\label{eq:odesys-M-embed-infinity-ODE}
				\begin{aligned}
					\tddt u_{\vec k}^{(\vec M)}=-\left(\vec k\cdot \vec p + \vec a(\vec M,\vec k)Q\vec k\right) u_{\vec k}^{(\vec M)}
					+ \vec a(\vec M,\vec k)D&\left(u_{\vec k+\vec e_1}^{(\vec M)},\cdots,u_{\vec k+\vec e_K}^{(\vec M)}\right)Q\vec k,\\& 
					\quad
					u_{\vec k}^{(\vec M)}(0)=1, \qquad 
					\vec k\in \vec K^{\infty},
				\end{aligned}
			\end{equation}
			where
			\begin{equation}
				\label{eq:addalp}
				\vec a(\vec M,\vec k)\eqdef
				\begin{cases}
					\frac{\left(\vec{M}-\vec{k}\right)}{M-1}^T,&\qquad \vec k\in \vec K^{\vec M},\\
					\vec a^T, &\qquad \vec k\in \vec K^{\infty}\backslash \vec K^{\vec M}.
				\end{cases}	
			\end{equation} 
		\end{subequations}
		Thus, for $\vec k\in \vec K^{\vec M}$, the ODEs of the infinite systems~\eqref{eq:odesys-M-embed-infinity} and~\eqref{eqs:pode} are identical. The ODEs of~\eqref{eq:odesys-M-embed-infinity} for $\vec k\in \vec K^{\vec M}$ are decoupled from those for $\vec k\in \vec K^{\infty}\backslash \vec K^{\vec M}$, since $\vec a(\vec M,\vec k)(j)=0$ when $k_j=M_j$ for any $1\leq j\leq K$. In addition, the ODEs of the infinite systems~\eqref{eq:odesys-M-embed-infinity} and~\eqref{eq:master_inf} for $\vec k\in \vec K^{\infty}\backslash \vec K^{\vec M}$ are identical, and are decoupled from the corresponding ODEs for $\vec K^{\vec M}$. Therefore $u_{\vec k}^{(\vec M)}\equiv u_{\vec k}^{(\infty)}$ for $\vec k\in \vec K^{\infty}\backslash \vec K^{\vec M}$. Hence, solutions of the finite~system~\eqref{eqs:pode} converge to solutions of the limiting system~\eqref{eq:master_inf} if and only if solutions of the infinite system~\eqref{eq:odesys-M-embed-infinity} converge to that limit.

		From the proof of Theorem~\ref{thm:fc_het_M->infinity}, it follows that 
		\begin{align}
			u_{\vec k}^{(\infty)} &= \prod_{j=1}^{K} (1-\frac{f_j}{a_j})^{k_j}, \qquad  \vec k\in\vec K^{\infty},
			\nonumber
			\\
			u_{\vec k}^{(\vec M)} &= \prod_{j=1}^{K} (1-\frac{f_j}{a_j})^{k_j}, \qquad \vec k\in \vec K^{\infty}\backslash \vec K^{\vec M}.
			\label{eq:u_n^(M)-for-n>M-het}
		\end{align}
		Therefore, solutions of the infinite systems~\eqref{eq:master_inf} and~\eqref{eq:odesys-M-embed-infinity} do
		exist.
		
		The following technical results will be used in the statement and proof of Theorem~\ref{thm:convergence}.
		
		\begin{lemma}\label{lem:elem}
			Let $\{p_j\}$ and $\{q_{m,j}\}$ satisfy \eqref{eq:pqge0}, let $\vec a(\vec M,\vec k)$ be given by~\eqref{eq:addalp}, and let
			\begin{equation}
				\label{eq:qnbnd}
				A\eqdef \sup_{\vec M\in\NN^K, \vec k\in\vec K^{\infty},} \max_{\omega_j\ge0, \sum_{j=1}^{K}\omega_j=1}\vec a(\vec M,\vec k)Q\vec \omega,
			\end{equation}
			where $\vec{\omega}=\left(\omega_1,\dots,\omega_K\right)^T$ and $Q$ is defined in \eqref{eq:Q}.
			Then $0<A<\infty$. 
			
			Define
			\begin{equation}\label{eq:supeps}
				\widetilde \eps\eqdef \ln(1+\tfrac{\min_j p_j}A), \qquad
				\theta(\eps)\eqdef \frac{A e^{\eps}}{\min_j p_j+A}.
			\end{equation}
			Also, for every $\eps\ge0$ and $\myvec v=\left(v_1,v_2,\dots\right)\in K^{\NN}$, define
			\begin{equation}\label{eq:epsnorm}
				\|\myvec v\|_{\eps}\eqdef \sup_{\vec k\in\vec K^{\infty}} e^{-\eps n(\vec k)}|v_{\vec k}|, \qquad 
				\vertiii{\myvec v}_\eps \eqdef \sup_{t\ge0}\|\myvec v(t)\|,
			\end{equation}
			where $n(\vec k)$ is defined in~\eqref{eq:n_def}.
			Then for all $0\le\eps<\widetilde\eps$, $\vec M\in\NN^K$, and $\vec k\in\vec K^{\infty}$,
			\begin{equation}
				\label{eq:thetaest}
				\frac{\vec a(\vec M,\vec k)Q\vec k \,e^\eps}{\vec k\cdot\vec p+\vec a(\vec M,\vec k)Q\vec k }\le \theta(\eps)<1.
			\end{equation}
			In addition, the function $\| {\bf v} \|_\eps$ is a norm on the space 
			$
			{\bf V}_\epsilon  := \{{\bf v} \, \Huge| \, \| {\bf v}  \|_\epsilon <\infty  \},
			$
			and the function $\vertiii{{\bf v}(t)}_\epsilon$ is a norm on the space $C^0_B([0,\infty), {\bf V}_\epsilon)$ of bounded continuous functions on $[0,\infty)$  taking values in ${\bf V}_\epsilon$. 
		\end{lemma}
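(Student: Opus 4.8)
The plan is to establish the three assertions of the lemma in turn: the two-sided bound $0<A<\infty$, the key estimate~\eqref{eq:thetaest}, and the claim that $\|\cdot\|_\eps$ and $\vertiii{\cdot}_\eps$ are norms. Throughout I would write $\underline p\eqdef\min_j p_j>0$ (positive because there are only finitely many groups and each $p_j>0$ by~\eqref{eq:pqge0}) and $q_{\max}\eqdef\max_{i,j}q_{i,j}$. The one structural observation driving everything is that in \emph{both} branches of~\eqref{eq:addalp} the row vector $\vec a(\vec M,\vec k)$ has nonnegative entries whose sum is at most~$1$: in the first branch $\sum_j \tfrac{M_j-k_j}{M-1}=\tfrac{M-n(\vec k)}{M-1}\le 1$ since $n(\vec k)\ge1$, and in the second branch $\sum_j a_j=1$ by~\eqref{eq:a_k_def}.

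For finiteness, observe that for any $\vec\omega$ in the simplex the entries of $Q\vec\omega$ satisfy $(Q\vec\omega)_i=\sum_j q_{i,j}\omega_j\le q_{\max}$, so $\vec a(\vec M,\vec k)Q\vec\omega\le q_{\max}\sum_i \vec a(\vec M,\vec k)(i)\le q_{\max}$; taking the supremum in~\eqref{eq:qnbnd} gives $A\le q_{\max}<\infty$. For positivity I would use the second branch of~\eqref{eq:addalp}, where $\vec a(\vec M,\vec k)=\vec a^T$ with every $a_i>0$: assuming the model carries genuine internal influence, i.e.\ $q_{i_0,j_0}>0$ for some $i_0,j_0$, the choice $\vec\omega=\vec e_{j_0}$ yields $\vec a^T Q\vec e_{j_0}=\sum_i a_i q_{i,j_0}\ge a_{i_0}q_{i_0,j_0}>0$, whence $A>0$. (If $Q\equiv 0$ the internal influence vanishes and the model degenerates; I would flag this case explicitly, since then $A=0$ and $\widetilde\eps$ is undefined.)

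The heart of the proof is the estimate~\eqref{eq:thetaest}, and the trick is to normalize $\vec k$ onto the simplex. Setting $n\eqdef n(\vec k)\ge1$ and $\vec\omega\eqdef \vec k/n$, the vector $\vec\omega$ lies in the simplex, so linearity and the definition~\eqref{eq:qnbnd} give $\vec a(\vec M,\vec k)Q\vec k = n\,\vec a(\vec M,\vec k)Q\vec\omega\le nA$, while $\vec k\cdot\vec p=\sum_j k_j p_j\ge \underline p\, n$. Since $x\mapsto \tfrac{x}{c+x}$ is increasing on $[0,\infty)$ for each fixed $c>0$, and $\tfrac{nA}{c+nA}$ is decreasing in $c$, these two bounds combine to
\begin{equation*}
\frac{\vec a(\vec M,\vec k)Q\vec k}{\vec k\cdot\vec p+\vec a(\vec M,\vec k)Q\vec k}\le \frac{nA}{\underline p\, n+nA}=\frac{A}{\underline p+A}.
\end{equation*}
Multiplying by $e^\eps$ gives the first inequality in~\eqref{eq:thetaest} with the constant $\theta(\eps)=\tfrac{Ae^\eps}{\underline p+A}$ from~\eqref{eq:supeps}; and $\theta(\eps)<1$ is equivalent to $e^\eps<1+\underline p/A$, i.e.\ to $\eps<\widetilde\eps$, which yields the second inequality for $0\le\eps<\widetilde\eps$. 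This mirrors the homogeneous bound~\eqref{eq:thetaest-hom} exactly.

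Finally, $\|\cdot\|_\eps$ is a weighted supremum norm on sequences indexed by $\vec K^\infty$ (with weights $e^{-\eps n(\vec k)}$) and $\vertiii{\cdot}_\eps$ is the induced sup-in-time norm; checking the norm axioms and completeness is routine and identical to the corresponding statement in the homogeneous case, so I would record it as standard. The only genuinely new ingredient is the normalization $\vec k=n(\vec k)\vec\omega$ combined with the two monotonicity bounds, which I expect to be the main step — though it is more a clean scaling observation than a real obstacle, since once $\vec a(\vec M,\vec k)$ is recognized as substochastic the estimate reduces to the single-variable monotonicity of $x/(c+x)$.
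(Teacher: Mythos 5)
Your proposal is correct, and in fact it supplies more than the paper does: the paper's entire proof of this lemma is the single sentence ``These are standard results,'' so there is no argument to compare against. The details you give are the natural ones and they all check out: the substochasticity of $\vec a(\vec M,\vec k)$ in both branches of~\eqref{eq:addalp} (sum $\tfrac{M-n(\vec k)}{M-1}\le 1$ in the first, sum $1$ in the second) gives $A\le\max_{i,j}q_{i,j}<\infty$; the normalization $\vec\omega=\vec k/n(\vec k)$ onto the simplex together with $\vec k\cdot\vec p\ge n(\vec k)\min_j p_j$ and the monotonicity of $x\mapsto x/(c+x)$ yields~\eqref{eq:thetaest}, exactly mirroring the homogeneous bound~\eqref{eq:thetaest-hom}; and the norm claims are routine weighted-sup-norm facts. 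Your flag about the degenerate case $Q\equiv 0$ is a genuine (if minor) observation: under hypothesis~\eqref{eq:pqge0} alone one can have $A=0$, in which case $\widetilde\eps$ is undefined as written, so the lemma implicitly assumes some $q_{i,j}>0$ (or one should read $\widetilde\eps=+\infty$ and note that~\eqref{eq:thetaest} then holds trivially with both sides zero). You might also record explicitly that the second branch of~\eqref{eq:addalp} is actually attained in the supremum~\eqref{eq:qnbnd} --- for any $\vec M$ one can choose $\vec k\in\vec K^\infty\setminus\vec K^{\vec M}$ --- which is what makes the positivity argument via $\vec a^T Q\vec e_{j_0}>0$ legitimate; you use this implicitly and it is true, but it deserves a sentence.
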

		\begin{proof}
			These are standard results.
		\end{proof}
		
		\begin{theorem}
			\label{thm:convergence}
				Assume the conditions of Lemma~\ref{lem:elem}. Let ${\mathbf  u}^{(\infty)}$ and ${\mathbf  u}^{(M)}$  be the solutions of~\eqref{eq:master_inf} and~\eqref{eq:odesys-M-embed-infinity}, respectively. Then for $0<\eps<\widetilde\eps$,
				\begin{equation}
					\label{eq:limu}
					\altvertiii{\myvec u^{(\vec M)}-{\myvec u}^{(\infty)}}_\eps=O\left(\frac1{M}\right), \qquad M\to\infty.
				\end{equation}
			\end{theorem}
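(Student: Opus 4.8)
The plan is to run the argument of Theorem~\ref{thm:Steve-hom} in the present vector-valued, matrix-weighted setting. Set $\myvec w := \myvec u^{(\vec M)} - \myvec u^{(\infty)}$, write $\beta_{\vec k} := \vec k \cdot \vec p + \vec a^T Q \vec k$ for the decay rate of the \emph{limit} system~\eqref{eq:master_inf}, and note at the outset the uniform bound $\altvertiii{\myvec u^{(\vec M)}}_0 \le 1$: for $\vec k \in \vec K^{\vec M}$ the $u_{\vec k}^{(\vec M)}$ are probabilities and hence lie in $[0,1]$, while for $\vec k \in \vec K^{\infty}\backslash\vec K^{\vec M}$ they equal the explicit products~\eqref{eq:u_n^(M)-for-n>M-het}, whose factors $1 - f_j/a_j$ also lie in $[0,1]$. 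Subtracting~\eqref{eq:master_inf} from~\eqref{eq:odesys-M-embed-infinity-ODE} and choosing the integrating factor $e^{\beta_{\vec k} t}$ to match the limit coefficient, the same rearrangement as in the homogeneous case gives
$$\tddt w_{\vec k} + \beta_{\vec k} w_{\vec k} = \sum_{i=1}^K a_i (Q\vec k)_i\, w_{\vec k + \vec e_i} + \sum_{i=1}^K \left(a_i - \vec a(\vec M,\vec k)_i\right)(Q\vec k)_i\left(u_{\vec k}^{(\vec M)} - u_{\vec k + \vec e_i}^{(\vec M)}\right),$$
in which the first sum is the \emph{propagation term} (built from the limit coefficients $\vec a$) and the second is the \emph{error term} (carrying the coefficient discrepancy $\vec a - \vec a(\vec M,\vec k)$), exactly mirroring the decomposition underlying~\eqref{eq:diffinteq-hom}.

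Integrating from $0$ to $t$ with $w_{\vec k}(0) = 0$, multiplying by $e^{-\eps n(\vec k)}$, and taking the supremum over $\vec k$ and $t$, I would treat the two sums separately. For the propagation term the identity $n(\vec k + \vec e_i) = n(\vec k) + 1$ converts the weight into a factor $e^{\eps}$, and performing the $\int_0^t e^{-\beta_{\vec k}(t-s)}\,ds$ integration leaves the ratio $\frac{\vec a^T Q \vec k\, e^\eps}{\vec k\cdot\vec p + \vec a^T Q\vec k}$, which Lemma~\ref{lem:elem} bounds by $\theta(\eps) < 1$ via~\eqref{eq:thetaest} (the stated bound covers this ratio since $x \mapsto \frac{x e^\eps}{c+x}$ is increasing, $\vec a^T Q \vec k \le n(\vec k) A$, and $\vec k\cdot\vec p \ge n(\vec k)\min_j p_j$). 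This yields $\altvertiii{\myvec w}_\eps \le \theta(\eps)\altvertiii{\myvec w}_\eps + E_M$, where $E_M$ denotes the contribution of the error term.

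The crux is showing $E_M = O(1/M)$, and I expect this to be the main obstacle. Using $|u_{\vec k}^{(\vec M)} - u_{\vec k + \vec e_i}^{(\vec M)}| \le 2\altvertiii{\myvec u^{(\vec M)}}_0 \le 2$, the estimate $|(Q\vec k)_i| \le (\max_{m,j}q_{m,j})\,n(\vec k)$, and $\beta_{\vec k} \ge n(\vec k)\min_j p_j$, the integrated error term is controlled, for $\vec k \in \vec K^{\vec M}$, by a constant multiple of $e^{-\eps n(\vec k)}\sum_i |a_i - \vec a(\vec M,\vec k)_i|$ (while it vanishes for $\vec k \in \vec K^{\infty}\backslash\vec K^{\vec M}$, where $\vec a(\vec M,\vec k) = \vec a^T$). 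The coefficient discrepancy splits as $|a_i - \tfrac{M_i - k_i}{M-1}| \le |a_i - \tfrac{M_i}{M-1}| + \tfrac{k_i}{M-1}$: the first piece is $O(1/M)$ by assumption~\eqref{eq:pop_frac_lim}, while the second grows linearly, $\tfrac{k_i}{M-1} \le \tfrac{n(\vec k)}{M-1}$. The growing factor $n(\vec k)$ is then absorbed by the exponential weight exactly as in~\eqref{eq:q-q^m=O(1/M)-hom}, since $\sup_{\vec k} n(\vec k)\,e^{-\eps n(\vec k)} \le \eps^{-1}\max_{y\ge0} y e^{-y} < \infty$ for each fixed $\eps > 0$. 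Hence $E_M = O(1/M)$, and combining with the contraction and $\theta(\eps) < 1$ gives $\altvertiii{\myvec u^{(\vec M)} - \myvec u^{(\infty)}}_\eps \le \frac{E_M}{1 - \theta(\eps)} = O(1/M)$, which is~\eqref{eq:limu}.
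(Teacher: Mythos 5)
Your proposal is correct and follows essentially the same route as the paper's proof: the same uniform bound $\altvertiii{\myvec u^{(\vec M)}}_0\le1$, the same decomposition of the difference equation into a propagation term (with limit coefficients $\vec a$) and an error term carrying $\vec a-\vec a(\vec M,\vec k)$, the same integrating factor $e^{(\vec k\cdot\vec p+\vec a Q\vec k)t}$, the contraction via $\theta(\eps)<1$, and the same two-part estimate of the coefficient discrepancy (assumption~\eqref{eq:pop_frac_lim} for the $M_i/(M-1)-a_i$ piece and absorption of $k_i/(M-1)\le n(\vec k)/(M-1)$ by the weight $e^{-\eps n(\vec k)}$, exactly as in the paper's Lemma~\ref{lem:q-q^m=O(1/M)}). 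Your component-wise notation versus the paper's diagonal-matrix notation is purely cosmetic.
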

			
			\begin{proof}
				Since $\{u^{(M)}_{\vec k}\}$ are probabilities for $\vec{k}\in \vec K^{\vec M}$, see~\eqref{eqs:pode}, they are bounded between~0
				and~1.
				In addition, $\{u^{(M)}_{\vec k}\}$ are given by~\eqref{eq:u_n^(M)-for-n>M-het} for $\vec k\in \vec K^{\infty}\backslash \vec K^{\vec M}$,
				and so are also bounded between~0 and~1.
				Therefore, we have the uniform bound
				\begin{equation}
					\label{eq:ubnd-het}
					\vertiii{{\mathbf  u}^{(M)}}_0 \le 1.
				\end{equation}
				Subtracting \eqref{eq:master_inf} from \eqref{eq:odesys-M-embed-infinity} yields
				\begin{equation}
					\label{eq:diffeq}
					\begin{aligned}
						\tddt \left[u^{(\vec M)}_{\vec k}-u_{\vec k}^{(\infty)}\right]&
						=-[\vec k\cdot\vec p +\vec aQ\vec k](u^{(\vec  M)}_{\vec k}-u_{\vec k}^{(\infty)})
						\\&\quad+ \vec aD\left(u^{(\vec M)}_{\vec k+\vec e_1}-u_{\vec k+\vec e_1}^{(\infty)},\cdots,u^{(\vec M)}_{\vec k+\vec e_K}-u_{\vec k+\vec e_K}^{(\infty)}\right)Q\vec k 
						\\&\quad  +[\vec a-\vec a(\vec M,\vec k)]D\left(u^{(\vec M)}_{\vec k}-u^{(\vec M)}_{\vec k+\vec e_1},\cdots,
						u^{(\vec M)}_{\vec  k}-u^{(\vec M)}_{\vec k+\vec e_K}\right)Q\vec k.
					\end{aligned}
				\end{equation}
				Multiplying 
				by the integrating factor $e^{(\vec k\cdot\vec p +\vec a Q\vec k)t}$,
				integrating from zero to~$t$, and using
				$u^{(M)}_{\vec k}(0)=u_{\vec k}^{(\infty)}(0)$,
				yields
				\footnotesize
				\begin{align}
					\label{eq:diffinteq}
					&u^{(\vec M)}_{\vec k}(t)-u_{\vec k}^{(\infty)}(t)
					\\=&\int_0^t e^{-(\vec k\cdot\vec p +\vec a Q\vec k)(t-s)}\vec a D\left(u^{(\vec M)}_{\vec k+\vec e_1}(s)-u_{\vec k+\vec e_1}^{(\infty)}(s),\cdots,u^{(\vec M)}_{\vec k+\vec e_K}(s)-u_{\vec k+\vec e_K}^{(\infty)}(s)\right)Q\vec k \,ds \nonumber
					\\+&\int_0^t e^{-(\vec k\cdot\vec p +\vec a Q\vec k)(t-s)}[\vec a-\vec a(\vec M,\vec k)] D\left(u^{(\vec M)}_{\vec k}(s)-u^{(\vec M)}_{\vec k+\vec e_1}(s),\cdots,
					u^{(\vec M)}_{\vec  k}(s)-u^{(\vec M)}_{\vec k+\vec e_K}(s)\right)Q\vec k\,ds, \nonumber
				\end{align}
				\normalsize
				since $\myvec u^{(\vec M)}$ and $\myvec u^{(\infty)}$ have the same initial data.
				Taking the $\vertiii{\,}_\eps$ norm with $0<\eps<\widetilde \eps$ of both sides of~\eqref{eq:diffinteq}, estimating
				on the right, and using \eqref{eq:ubnd-het}  yields
				\small
				\begin{equation}
					\label{eq:diffest}
					\begin{aligned}
						&\altvertiii{\myvec u^{(\vec M)}-\myvec u^{(\infty)}}_\eps
						\\&\quad\le
						\sup_{t,\vec k} [\vec a Q\vec k] e^{-\eps n(\vec k)} \int_0^t e^{-(\vec k\cdot\vec p +\vec a Q\vec k)(t-s)}
						e^{\eps(1+ n(\vec k)) }\altvertiii{\myvec u^{(\vec M)}-\myvec u^{(\infty)}}_\eps\,ds
						\\&\qquad+\sup_{t,\vec k} 2\sum_{i=1}^{K} \left|\vec a(\vec M,\vec k)(i)-a_i\right|\max_{i,j}q_{i,j}   n(\vec k) e^{-\eps n(\vec k)}\int_0^t e^{-(\vec k\cdot\vec p +\vec a Q\vec k)(t-s)} \altvertiii{\myvec u^{(\vec M)}}_0\,ds
						\\&\quad=\sup_{t,\vec k} \frac{  [\vec a Q\vec k] e^{\eps}}{\vec k\cdot\vec p +\vec a Q\vec k}\altvertiii{\myvec u^{(M)}-\myvec u^{(\infty)}}_\eps
						(1-e^{-(\vec k\cdot\vec p +\vec a Q\vec k)t})
						\\&\qquad+\sup_{t,\vec k} \frac{2\sum_{i=1}^{K} \left|\vec a(\vec M,\vec k)(i)-a_i\right|\max_{i,j}q_{i,j}   n(\vec k)   e^{-\eps n(\vec k)}}{\vec k\cdot\vec p +\vec a Q\vec k}
						(1-e^{(\vec k\cdot\vec p +\vec a Q\vec k)t})\altvertiii{\myvec u^{(\vec M)}}_0
						\\&\quad\le \theta(\eps) \altvertiii{\myvec u^{(\vec M)}-\myvec u^{(\infty)}}_\eps
						+2\max_{i,j}q_{i,j}\frac{\sup_{\vec k} \sum_{i=1}^{K} \left|\vec a(\vec M,\vec k)(i)-a_i\right| e^{-\eps n(\vec k)}}{\min_j p_j}.
					\end{aligned}
				\end{equation}
				\normalsize
				Since $\eps<\widetilde\eps$, then $\theta(\eps)<1$, see~\eqref{eq:supeps}, and so~\eqref{eq:diffest} implies that
				\small
				\begin{equation}
					\label{eq:diffest2}
					\altvertiii{\myvec u^{(M)}-\myvec u^{(\infty)}}_\eps\le \frac{2\max_{i,j}q_{i,j}}{\left(\min_j
						p_j\right)(1-\theta(\eps))} \sup_{\vec k} \Big[\sum_{i=1}^{K} \left|\vec a(\vec M,\vec k)(i)-a_i\right| e^{-\eps n(\vec k)}\Big].
				\end{equation}
				\normalsize
				Relation~\eqref{eq:limu} follows from~\eqref{eq:diffest2}~and Lemma~\ref{lem:q-q^m=O(1/M)} below.
			\end{proof}	
			\begin{corollary}
				\label{cor:Niu}
				For any $\vec k\in\vec K^{\infty}$, $u_{\vec k}^{(\vec M)}(t)\to u_{\vec k}^{(\infty)}(t)$ uniformly in $t$ as $M \to \infty$.  Moreover, $u_{\vec k}^{(\vec M)}(t)- u_{\vec k}^{(\infty)}(t)  = O\left(\frac1{M}\right)$ as $M \to \infty$. 
			\end{corollary}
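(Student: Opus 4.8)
The plan is to read off the component-wise convergence and its rate directly from the weighted-norm estimate~\eqref{eq:limu} of Theorem~\ref{thm:convergence}, exactly as Corollary~\ref{cor:Niu-hom} was deduced from~\eqref{eq:limu-hom} in the homogeneous case. Fixing an index $\vec k\in\vec K^{\infty}$ and choosing any $\eps$ with $0<\eps<\widetilde\eps$, I would unpack the definition~\eqref{eq:epsnorm} of the weighted sup-norm: for every $t\ge0$ the single index $\vec k$ contributes the term $e^{-\eps n(\vec k)}\left|u_{\vec k}^{(\vec M)}(t)-u_{\vec k}^{(\infty)}(t)\right|$ to the supremum defining the norm, so
\begin{equation*}
\left|u_{\vec k}^{(\vec M)}(t)-u_{\vec k}^{(\infty)}(t)\right|\le e^{\eps n(\vec k)}\,\altvertiii{\myvec u^{(\vec M)}-\myvec u^{(\infty)}}_\eps .
\end{equation*}
Since $\vec k$ is fixed, the prefactor $e^{\eps n(\vec k)}$ is a finite constant independent of both $M$ and $t$.

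Next I would invoke Theorem~\ref{thm:convergence}, which gives $\altvertiii{\myvec u^{(\vec M)}-\myvec u^{(\infty)}}_\eps=O\left(\frac1M\right)$; multiplying by the constant prefactor therefore yields
\begin{equation*}
\sup_{t\ge0}\left|u_{\vec k}^{(\vec M)}(t)-u_{\vec k}^{(\infty)}(t)\right|=O\left(\frac1M\right),\qquad M\to\infty .
\end{equation*}
Because the triple-norm $\altvertiii{\,\cdot\,}_\eps$ already incorporates the supremum over $t\ge0$, this bound is uniform in $t$, delivering simultaneously the uniform convergence $u_{\vec k}^{(\vec M)}\to u_{\vec k}^{(\infty)}$ and the claimed $O(1/M)$ rate.

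Finally, I would observe that this also closes the gap left open earlier in Lemma~\ref{lem:Steve-convergence-het}: for $\vec k\in\vec K^{\vec M}$ the embedded solution agrees with that of the finite master system, $u_{\vec k}^{(\vec M)}\equiv u_{\vec k}$, so the displayed estimate reads $u_{\vec k}-u_{\vec k}^{(\infty)}=O(1/M)$ uniformly in $t$. At the level of the corollary there is essentially no obstacle, since every analytic ingredient---the contraction property $\theta(\eps)<1$ from Lemma~\ref{lem:elem} and the $O(1/M)$ decay of the coefficient perturbation $\sum_i\left|\vec a(\vec M,\vec k)(i)-a_i\right|$---has already been secured in the proof of Theorem~\ref{thm:convergence}. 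The only point deserving a word of care is that $\eps$ must be chosen strictly inside $(0,\widetilde\eps)$ so that Theorem~\ref{thm:convergence} applies; as $n(\vec k)<\infty$, any admissible $\eps$ produces the same $O(1/M)$ conclusion for the fixed multi-index $\vec k$.
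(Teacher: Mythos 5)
Your proposal is correct and follows exactly the paper's own argument: fix $\vec k$, use the definition of the weighted norm to get $\left|u_{\vec k}^{(\vec M)}(t)-u_{\vec k}^{(\infty)}(t)\right|\le e^{\eps n(\vec k)}\altvertiii{\myvec u^{(\vec M)}-\myvec u^{(\infty)}}_\eps$, and invoke the $O(1/M)$ bound of Theorem~\ref{thm:convergence}. Nothing is missing.
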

			\begin{proof}
				By~\eqref{eq:epsnorm}
				\begin{equation}
					\label{eq:|u_n|<|||u_n|||}
					\left| u_{\vec k}^{(\vec M)}(t) -  u_{\vec k}^{(\infty)}(t) \right| \le e^{\eps n(\vec k)}\vertiii{\mathbf  u^{(M)}-\mathbf  u^{(\infty)}}_\eps, \qquad \vec k\in \vec K^{\infty}.
				\end{equation}
				Therefore, the result follows from the rate of convergence estimate \eqref{eq:limu}.
			\end{proof}
			\begin{lemma}
				\label{lem:q-q^m=O(1/M)}
				Let $0<\eps<\widetilde\eps$. Then 
				\begin{equation}
					\label{eq:q-q^m=O(1/M)}
					\sup_{\vec k\in\vec K^{\infty}} \Big[\sum_{i=1}^{K} \left|\vec a(\vec M(M),\vec k)(i)-a_i\right| e^{-\eps n(\vec k)}\Big]=O\left(\frac1{M}\right), \qquad M\to\infty.
				\end{equation}
			\end{lemma}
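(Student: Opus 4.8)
The plan is to exploit the fact that the summand vanishes identically off $\vec K^{\vec M}$, and to reduce the remaining supremum over $\vec K^{\vec M}$ to a one-dimensional estimate in the single integer $n(\vec k)$, exactly as in the homogeneous case leading to~\eqref{eq:q-q^m=O(1/M)-hom}. First I would split the supremum according to the two branches of the definition~\eqref{eq:addalp}. For $\vec k\in\vec K^{\infty}\backslash\vec K^{\vec M}$ one has $\vec a(\vec M,\vec k)(i)=a_i$ by definition, so every term $\left|\vec a(\vec M,\vec k)(i)-a_i\right|$ is zero and such $\vec k$ contribute nothing. It therefore suffices to bound the supremum over $\vec k\in\vec K^{\vec M}$, where $\vec a(\vec M,\vec k)(i)=\frac{M_i-k_i}{M-1}$.

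Next I would estimate each component. Writing $a_i=\frac{a_iM-a_i}{M-1}$ and applying the triangle inequality gives
\[
\left|\frac{M_i-k_i}{M-1}-a_i\right|
=\frac{\left|(M_i-a_iM)-k_i+a_i\right|}{M-1}
\le\frac{\left|M_i-a_iM\right|+k_i+a_i}{M-1}.
\]
Assumption~\eqref{eq:pop_frac_lim} says precisely that $\left|M_i-a_iM\right|=M\left|\tfrac{M_i}{M}-a_i\right|$ is bounded by a constant $C_0$ independent of $M$ for $M$ large, and $a_i\le1$ by~\eqref{eq:a_k_def}. Summing over $i$ and using $\sum_{i=1}^{K}k_i=n(\vec k)$ then yields
\[
\sum_{i=1}^{K}\left|\vec a(\vec M,\vec k)(i)-a_i\right|
\le\frac{K(C_0+1)+n(\vec k)}{M-1}.
\]

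Finally I would multiply by $e^{-\eps n(\vec k)}$ and take the supremum, at which point the estimate depends on $\vec k$ only through $n\eqdef n(\vec k)\ge1$. The right-hand side becomes $\frac{1}{M-1}\bigl(K(C_0+1)+n\bigr)e^{-\eps n}$, and since $\sup_{n\ge0}\bigl(K(C_0+1)+n\bigr)e^{-\eps n}$ is finite for every fixed $\eps>0$ (the expression decays exponentially in $n$ and hence attains a finite maximum, just as $C\eqdef\max_{y\ge0}(ye^{-y})$ was used in~\eqref{eq:q-q^m=O(1/M)-hom}), I obtain a bound of the form $\frac{C_1(\eps)}{M-1}=O\!\left(\frac1M\right)$, uniform in $\vec k$. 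This establishes~\eqref{eq:q-q^m=O(1/M)}.

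The only mild subtlety, rather than a genuine obstacle, is the linearly growing factor $n(\vec k)$ produced by $\sum_i k_i$: it would destroy uniformity in $\vec k$ were it not damped by the weight $e^{-\eps n(\vec k)}$, and this damping is precisely why the lemma holds for each fixed $\eps\in(0,\widetilde\eps)$ rather than at $\eps=0$. Everything else is the triangle inequality combined with the $O(1)$ control of $M_i-a_iM$ supplied by~\eqref{eq:pop_frac_lim}.
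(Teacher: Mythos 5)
Your proof is correct and follows essentially the same route as the paper's: a triangle-inequality split of $\left|\frac{M_i-k_i}{M-1}-a_i\right|$ into a $\vec k$-independent part controlled by~\eqref{eq:pop_frac_lim} and a $k_i$ part whose linear growth in $n(\vec k)$ is absorbed by the weight $e^{-\eps n(\vec k)}$ via $\sup_{y\ge0} y e^{-\eps y}<\infty$. The only (harmless) differences are that you explicitly note the vanishing of the summand on $\vec K^{\infty}\backslash\vec K^{\vec M}$ and group the constant terms slightly differently.
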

			\begin{proof}
				By~\eqref{eq:addalp},
				\begin{align*}
					\sup_{\vec k\in \vec K^{\infty}}\Big[\sum_{i=1}^{K} &\left|\vec a(\vec M(M),\vec k)(i)-a_i\right| e^{-\eps n(\vec k)}\Big]=
					\sup_{\vec k\in\vec K^{\infty}} \Big[\sum_{i=1}^K \left|\frac{M_i-k_i}{M-1}-a_i\right| e^{-\eps n(\vec k)}\Big]\\&\leq\sup_{\vec k\in\vec K^{\infty}} \Big[\sum_{i=1}^K \left|\frac{M_i}{M-1}-a_i\right| e^{-\eps n(\vec k)}\Big]+ \sup_{\vec k\in\vec K^{\infty}} \frac{1}{M-1}\Big[\sum_{i=1}^{K} k_ie^{-\eps n(\vec k)}\Big]\\
					&\leq \sum_{i=1}^{K} \left|\frac{M_i}{M-1}-a_i\right| + \frac{1}{M-1}\sup_{x\geq 0} \Big[xe^{-\eps x}\Big]=O\left(\frac1{M}\right),
				\end{align*}
				where the last equality follows from~\eqref{eq:pop_frac_lim}.
			\end{proof}
			

			\section{Effect of heterogeneity}
			\label{sec:effect}
			Despite the general agreement that individuals are anything but homogeneous, only a few studies out of the sizeable literature on compartmental Bass models analyzed the qualitative effect of heterogeneity in compartmental Bass models. 
			Chaterjee and Eliashberg constructed a compartmental diffusion model which allowed for  heterogeneity in consumers' initial perceptions and price hurdles, and showed that heterogeneity can alter the qualitative behavior of aggregate adoption~\cite{Chatterjee-Eliashberg-90}. Bulte and Joshi divided the population into two groups: The influentials with $p = p_1$ and $q = q_1$, and the imitators with $p=0$ and $q = q_2$. Their numerical results revealed that heterogeneity in $p$ and $q$ can change the qualitative behavior of the diffusion~\cite{Bulte2007NewPD}.
			
			In this section, we analyze the qualitative effect of heterogeneity, within the framework of the compartmental Bass model. In most of the analysis, we will be considering a {\em milder heterogeneity} in $q$, in which $\{q_j\}_{j=1}^M$ are allowed to be heterogeneous,
			but each individual is equally influenced by any adopter. Formula~\eqref{eq:q_on_node} then yields
			\begin{equation}
				\label{eq:q_mild_het}
				q_{m,j}={q_{j}}, \qquad m\neq j,
			\end{equation}
			and~\eqref{eq:heterogeneous_Bass} becomes
			\begin{equation}
				\label{eq:q_mild_heterogeneous_Bass}
				\frac{d}{dt}f_k(t)=\left(a_k-f_k\right)\left(p_k+q_kf^{\rm heter}_{\rm Bass}\right), \qquad f_k(0)=0, \qquad k=1,\ldots,K.
			\end{equation}
			Thus, we compare the fraction of adopters\newline $f^{\rm het}(t;\{p_j\},\{q_{i,j}\},\{a_j\}):=f^{\rm heter}_{\rm Bass}(t;\{p_j\},\{q_{i,j}\},\{a_j\})$ or $f^{\rm het}(t;\{p_j\},\{q_j\},\{a_j\}):=f^{\rm heter}_{\rm Bass}(t;\{p_j\},\{q_j\},\{a_j\})$ in the compartmental model~\eqref{eq:heterogeneous_Bass}, or~\eqref{eq:q_mild_heterogeneous_Bass}, respectively, with that in the corresponding homogeneous case $f^{\rm hom}:=f_{\rm Bass}(t;\bar{p},\bar{q})$, where
			\begin{equation}
				\label{eq:p_bar_q_bar}
				\bar{p}:=\sum_{k=1}^{K}a_kp_k,\qquad \bar{q}=\sum_{k=1}^{K}a_kq_k=\sum_{k=1}^{K}a_k\sum_{m=1}^{K}a_mq_{m,k}.
			\end{equation}
			We do that using the {\em Bass Inequality Lemma}:
			\begin{lemma} 
				\label{lem:Bass_inequality}
				Let $f^{\rm het}(t)$ satisfy the Bass inequality
				\begin{equation*}
					\frac{d}{dt}f^{\rm het}(t)<(1-f^{\rm het})(p+qf^{\rm het}),\qquad f^{\rm het}(0)=0.
				\end{equation*}
				Then
				\begin{equation*}
					f^{\rm het}(t)<f_{\rm Bass}(t;p,q), \qquad 0<t<\infty.
				\end{equation*}
			\end{lemma}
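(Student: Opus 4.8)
The plan is to run a standard differential-inequality (comparison) argument, exploiting that the Bass function $g(t) := f_{\rm Bass}(t;p,q)$ solves the \emph{equality} version of the very same ODE with the \emph{same} initial datum. Writing $F(f) := (1-f)(p+qf)$, we have $g' = F(g)$ with $g(0)=0$, while the hypothesis reads $(f^{\rm het})' < F(f^{\rm het})$ with $f^{\rm het}(0)=0$. I set $w := g - f^{\rm het}$, so that $w(0)=0$, and the goal becomes showing $w(t)>0$ for all $t>0$.

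First I would establish that $w$ turns strictly positive immediately after $t=0$. Since $g(0)=f^{\rm het}(0)=0$, we have $g'(0)=F(0)=p$ and $(f^{\rm het})'(0)<F(0)=p$, so $w'(0)=g'(0)-(f^{\rm het})'(0)>0$. Combined with $w(0)=0$, this forces $w(t)>0$ for all sufficiently small $t>0$; fix $\delta>0$ with $w(t)>0$ on $(0,\delta)$.

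The heart of the argument is then a \emph{first-contact} contradiction. Suppose $w$ is not positive throughout $(0,\infty)$, and let $t_* := \inf\{t>0 : w(t)=0\}$. By the previous step $t_*\ge\delta>0$, and by continuity $w(t_*)=0$ while $w(t)>0$ on $(0,t_*)$. On one hand, $w$ decreases to $0$ from the left at $t_*$, so its left derivative is nonpositive and hence $w'(t_*)\le 0$. On the other hand, $w(t_*)=0$ means $f^{\rm het}(t_*)=g(t_*)$, so $F(f^{\rm het}(t_*))=F(g(t_*))$, and therefore $w'(t_*)=g'(t_*)-(f^{\rm het})'(t_*)>F(g(t_*))-F(f^{\rm het}(t_*))=0$. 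These are incompatible, so no such $t_*$ exists, giving $w(t)>0$ for every $t>0$, which is precisely $f^{\rm het}(t)<f_{\rm Bass}(t;p,q)$.

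I expect the only delicate point to be propagating the \emph{strict} inequality out of the degenerate initial instant, where $g$ and $f^{\rm het}$ agree; this is exactly what the sign of $w'(0)$ and the first-contact step are designed to handle, and it is why a crude Gr\"onwall estimate (which would only yield $f^{\rm het}\le f_{\rm Bass}$) is not the right tool here. A pleasant feature is that, because $F$ is a polynomial and hence $C^1$, no Lipschitz bookkeeping is needed: the contradiction relies only on the equality $F(g(t_*))=F(f^{\rm het}(t_*))$ at the contact point. The argument tacitly assumes $f^{\rm het}\in C^1$, which is consistent with its intended application to solutions of the compartmental systems above.
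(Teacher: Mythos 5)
Your first-contact argument is sound and in fact mirrors the second half of the paper's own proof (the paper proves this as a special case of Lemma~\ref{lem:dif_ineq}: at a touching point $t_*>0$ one has $g'(t_*)=0$ from the extremum while the strict differential inequality forces $g'(t_*)<0$). The gap is in your opening step, where you claim $w'(0)>0$ because $(f^{\rm het})'(0)<F(0)=p$. This uses the strict Bass inequality \emph{at} $t=0$, but the hypothesis is only meant to hold for $0<t<\infty$ (compare the explicit range in Lemma~\ref{lem:dif_ineq}), and in every application in the paper it genuinely fails at $t=0$: the deficit $y$ in \eqref{eq:y_def} satisfies $y(0)=0$ since all $f_k(0)=0$, so $(f^{\rm het})'(0)=p=f_{\rm Bass}'(0)$ and $w'(0)=0$. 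With no sign information at $t=0$, you cannot conclude that $w>0$ on an initial interval $(0,\delta)$, and without that interval the first-contact argument has nowhere to start; a priori $w$ could dip negative immediately.

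What is missing is precisely the step the paper supplies with a Gr\"onwall-type estimate: it first proves the \emph{non-strict} inequality by applying the one-sided energy $E(s)=s^2$ for $s>0$ (and $E(s)=0$ for $s\le0$) to $f_1-f_2=-w$, obtaining $\tfrac{d}{dt}E\le 2LE$ with $E(0)=0$ and hence $w\ge0$ for all $t>0$; only then does it upgrade to the strict inequality via the touching-point argument. (Equivalently, on any maximal interval where $w<0$ one has $w'>F(g)-F(f^{\rm het})\ge -L|w|=Lw$, so $e^{-Lt}w$ is increasing there and cannot stay negative after starting from $0$.) Note that this step, unlike your contact-point computation, does require a Lipschitz bound on $F$ over the range of the solutions, so your remark that ``no Lipschitz bookkeeping is needed'' applies only to the part of the argument you wrote down, not to the part that is missing.
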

			\begin{proof}
				This result was proved in~\cite[Appendix A]{OR-10}. It is also a special case of Lemma~\ref{lem:dif_ineq} below.~
			\end{proof}
			To apply Lemma~\ref{lem:Bass_inequality}, let
			\begin{equation}
				\label{eq:y_def}
				y:=\left(1-f^{\rm het}\right)\left(p+qf^{\rm het}\right)-\frac{d}{dt}{f^{\rm het}}.
			\end{equation}
			Then
			\begin{equation*}
				\label{eq:diff_with_y}
				\frac{d}{dt}f^{\rm het}(t)=(1-{f^{\rm het}})(p+q{f^{\rm het}})-y(t), \qquad 0<t<\infty.
			\end{equation*}
			Hence, if we can show that $y(t)>0$ for $0<t<\infty$, then by Lemma~\ref{lem:Bass_inequality}, heterogeneity slows down the adoption. 
			
			In Theorem~\ref{lem:pos_cor}, we will show that $y>0$ using the following auxiliary result:
			\begin{lemma}\label{lem:ODElem}
				\label{lem:dif_ineq}
				Let 
				$$
				\frac{d}{dt}f_1(t)<H_2(f_1),\qquad \frac{d}{dt}f_2(t)=H_2(f_2), \qquad  0<t<\infty, \qquad f_1(0)=f_2(0).
				$$ 
				Assume that $H_2$ is Lipschitz continuous. Then 
				$$
				f_1(t)<f_2(t), \qquad 0<t<\infty.
				$$
			\end{lemma}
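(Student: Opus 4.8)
The plan is to study the difference $w(t) := f_2(t) - f_1(t)$, which starts at $w(0)=0$, and to show that it is strictly positive for all $t>0$. Subtracting the defining relations for $f_1$ and $f_2$ and invoking the Lipschitz bound $|H_2(a)-H_2(b)|\le L|a-b|$ gives, for $t>0$,
\[
w'(t) = H_2(f_2)-f_1'(t) > H_2(f_2)-H_2(f_1) \ge -L\,|w(t)|,
\]
where the strict middle step uses the strict hypothesis $f_1'<H_2(f_1)$. The entire argument then reduces to analyzing the single differential inequality $w' > -L|w|$ subject to $w(0)=0$.

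First I would prove non-negativity, $w(t)\ge 0$ for all $t\ge0$. Arguing by contradiction, suppose $w(t_1)<0$ for some $t_1>0$; by continuity and $w(0)=0$ there is a last zero $t_0\in[0,t_1)$ with $w(t_0)=0$ and $w<0$ on $(t_0,t_1]$. On that interval $|w|=-w$, so the inequality becomes $w'>Lw$, i.e. $\bigl(e^{-Lt}w\bigr)'>0$. Hence $e^{-Lt}w$ is strictly increasing on $[t_0,t_1]$ and vanishes at $t_0$, forcing $e^{-Lt_1}w(t_1)>0$ and contradicting $w(t_1)<0$. This is the step where the Lipschitz hypothesis is genuinely needed, turning the one-sided derivative bound into monotonicity of the integrating-factor product, and I expect it to be the only delicate point.

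The strict separation then follows immediately and without reusing the Lipschitz constant. If $w(t_1)=0$ for some $t_1>0$, then $f_2(t_1)=f_1(t_1)$, so $H_2(f_2(t_1))=H_2(f_1(t_1))$ and
\[
w'(t_1)=H_2(f_1(t_1))-f_1'(t_1)>0
\]
by the strict hypothesis. But $w\ge0$ together with $w(t_1)=0$ makes $t_1$ an interior global minimum, which forces $w'(t_1)=0$, a contradiction. Therefore $w(t)>0$, i.e. $f_1(t)<f_2(t)$, for every $t>0$, as claimed.
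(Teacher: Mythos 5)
Your proof is correct. The overall architecture matches the paper's: first establish the weak inequality $f_1\le f_2$, then upgrade it to a strict one by deriving a contradiction at a hypothetical touching point $t_1>0$ (where $w(t_1)=0$ forces $w'(t_1)=0$ as an interior minimum, yet the strict hypothesis gives $w'(t_1)>0$); this second step is essentially identical to the paper's. Where you differ is in the first step. The paper avoids any case analysis on the sign of $g:=f_1-f_2$ by introducing the auxiliary function $E(s)=(\max(s,0))^2$, showing $\tfrac{d}{dt}E(g)\le 2LE(g)$, and concluding $E(g)\equiv0$ by a Gronwall/integrating-factor argument; the positive-part-squared function absorbs the absolute value automatically. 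You instead localize: you take the last zero $t_0$ before a hypothetical point where $w<0$, note that $|w|=-w$ there so the differential inequality becomes $w'>Lw$ on that interval, and run the integrating factor $e^{-Lt}$ directly. Both are standard comparison-lemma techniques of comparable length; yours is slightly more elementary in that it needs no auxiliary $C^1$ function (and no check that $E\circ g$ is differentiable where $g$ vanishes), at the cost of the small topological argument that a last zero exists and that $w$ keeps one sign beyond it. Either version is a complete proof of the lemma.
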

			\begin{proof}
				See Section~\ref{sec:ODElem}.
			\end{proof}
			
			We now use Lemmas~\ref{lem:Bass_inequality}~and~\ref{lem:dif_ineq} to analyze the case where the heterogeneity in $q$ is
			mild, and $\{p_k\}$ and $\{q_k\}$ are positively  monotonically related, i.e., $p_i \le p_j \iff q_i \le p_q$. Without loss of generality, we can assume that
			\begin{subequations}
				\label{eqs:cor_conditions}
				\begin{equation}
					\label{eq:pq_correlated}
					p_1\leq p_2 \leq \dots \leq p_K, \qquad q_1\leq q_2 \leq \dots \leq q_K.
				\end{equation}  
				We assume that the network is not homogeneous, i.e., that
				\begin{equation}
					\label{eq:het_condition}
					\exists i,j \ {\rm such\ that}\ p_i>p_j \ {\rm or }\ q_i>q_j.
				\end{equation}
			\end{subequations}
			\begin{theorem}
				\label{lem:pos_cor}
				Let $f^{\rm het}$ and $f^{\rm hom}$ denote the fraction of adopters in the mildly heterogeneous compartmental Bass model~\eqref{eq:q_mild_heterogeneous_Bass} and in the corresponding homogeneous model~\eqref{eq:homogeneous_Bass} with $\bar{p}:=\sum_{k=1}^{K}a_kp_k$ and $\bar{q}:=\sum_{k=1}^{K}a_kq_k$, respectively. Assume that~\eqref{eqs:cor_conditions} holds. Then
				\begin{equation*}
					f^{\rm het}(t; \{p_k\}_{k=1}^K,\{q_k\}_{k=1}^K,\{a_k\}_{k=1}^K)<f^{\rm hom}(t; \bar{p},\bar{q}),\qquad 0<t<\infty.
				\end{equation*}
			\end{theorem}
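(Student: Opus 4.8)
The plan is to verify the differential inequality required by Lemma~\ref{lem:Bass_inequality} with $p=\bar p$ and $q=\bar q$; equivalently, to prove that the quantity $y$ from~\eqref{eq:y_def} satisfies $y(t)>0$ for $0<t<\infty$. Writing $\phi_k:=f_k/a_k$ for the within-group adoption fraction and $g_k:=p_k+q_k f^{\rm het}$, the first step is to put $y$ into a symmetric, covariance-type form. Since $\sum_k a_k=1$, one has $1-f^{\rm het}=\sum_k a_k(1-\phi_k)$, $\bar p+\bar q f^{\rm het}=\sum_j a_j g_j$, and $\frac{d}{dt}f^{\rm het}=\sum_k a_k(1-\phi_k)g_k$. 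Inserting $1=\sum_j a_j$ into the last sum and symmetrizing in $k\leftrightarrow j$ yields
\[
y=\tfrac12\sum_{k,j=1}^{K} a_k a_j\,(\phi_k-\phi_j)\bigl[(p_k-p_j)+(q_k-q_j)\,f^{\rm het}\bigr].
\]

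The heart of the argument is the claim that the within-group fractions inherit the ordering of the parameters: $\phi_1(t)\le\phi_2(t)\le\cdots\le\phi_K(t)$ for all $t\ge0$, and strictly between indices whose parameters differ once $t>0$. The key observation is that, in the mild model~\eqref{eq:q_mild_heterogeneous_Bass}, each $\phi_k$ solves the \emph{scalar} non-autonomous equation $\phi_k'=(1-\phi_k)(p_k+q_k F(t))$, $\phi_k(0)=0$, driven by the single common forcing $F(t):=f^{\rm het}(t)\ge0$. Because $0\le\phi_k<1$ (the level $\phi=1$ is a barrier) and because $p_k\le p_l$, $q_k\le q_l$, $F\ge0$ force the group-$l$ vector field to dominate the group-$k$ one pointwise, a standard scalar comparison principle — of which Lemma~\ref{lem:dif_ineq} is the strict form, here applied in its time-dependent variant since each right-hand side is linear, hence Lipschitz, in $\phi$ — gives $\phi_k\le\phi_l$; the domination is strict whenever $p_k<p_l$, or whenever $q_k<q_l$ and $F>0$, which yields the corresponding strict inequality for $t>0$.

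With this ordering established, every summand above is nonnegative: by the co-monotonicity~\eqref{eq:pq_correlated} the three factors $\phi_k-\phi_j$, $p_k-p_j$, and $q_k-q_j$ share the sign of $k-j$, and $f^{\rm het}\ge0$, so $y\ge0$. To upgrade this to $y>0$ on $(0,\infty)$ I would invoke the non-homogeneity~\eqref{eq:het_condition}: there is a pair $(i,j)$ with $p_i>p_j$ or $q_i>q_j$, and for that pair the bracket is positive (using $f^{\rm het}(t)>0$ for $t>0$ in the case of heterogeneity only in $\{q_k\}$) while $\phi_i>\phi_j$ strictly, so the $(i,j)$ summand is strictly positive. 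Then $y(t)>0$, and Lemma~\ref{lem:Bass_inequality} delivers $f^{\rm het}<f^{\rm hom}$ on $(0,\infty)$.

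I expect the monotonicity claim $\phi_1\le\cdots\le\phi_K$ to be the principal obstacle. The groups are coupled through the shared term $f^{\rm het}$, so the $K$-dimensional vector fields cannot be compared directly; the whole argument rests on recognizing that this coupling enters every equation through the \emph{same} scalar function $F(t)$, which decouples the problem into $K$ independent scalar comparisons. A secondary delicate point is the strictness when the heterogeneity is confined to $\{q_k\}$, where one must first argue that the diffusion actually starts, i.e., that $f^{\rm het}(t)>0$ for $t>0$, before the $q$-difference can make a summand strictly positive.
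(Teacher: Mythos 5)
Your proposal is correct and follows essentially the same route as the paper: the same symmetric (covariance-type) rewriting of $y$ as $\tfrac12\sum_{k,j}a_ka_j(\tilde f_k-\tilde f_j)\bigl[(p_k-p_j)+(q_k-q_j)f^{\rm het}\bigr]$, the same scalar comparison via Lemma~\ref{lem:dif_ineq} (exploiting that the coupling enters only through the common forcing $f^{\rm het}$) to get $\tilde f_1\le\cdots\le\tilde f_K$ with strictness where the parameters differ, and the same conclusion via Lemma~\ref{lem:Bass_inequality}. The point you flag about $f^{\rm het}(t)>0$ for $t>0$ is handled by the standing assumption $p_j>0$ in~\eqref{eq:pqge0}, exactly as the paper implicitly uses.
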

			\begin{proof}
				Let $y$ be given by~\eqref{eq:y_def}. Then by~\eqref{eq:heterogeneous_Bass},
				\begin{align*}
					\nonumber
					y &= \left(1-f^{\rm het}\right)\left(\bar{p}+\bar{q}f^{\rm het}\right)-\sum_{k=1}^{K}\frac{d}{dt}f_k\\&=\sum_{k=1}^{K}\left(a_k-f_k\right)\left(\bar{p}+\bar{q}f^{\rm het}\right)-\sum_{k=1}^{K}\left(a_k-f_k\right)\left(p_k+q_kf^{\rm het}\right)\nonumber\\\nonumber
					&=\sum_{k=1}^{K}\left(a_k-f_k\right)\left(\bar{p}-p_k+f^{\rm het}\left(\bar{q}-q_k\right)\right)=\sum_{k=1}^{K}f_k(p_k-\bar{p})+f^{\rm het} \sum_{k=1}^{K}f_k(q_k-\bar{q}) .
				\end{align*}
				Thus, 
				\begin{equation}
					\label{eq:y_correlated}
					y =\sum_{k=1}^{K}f_k p_k-\bar{p}f^{\rm het}
					+f^{\rm het}\left( \sum_{k=1}^{K}f_k q_k-\bar{q} f^{\rm het}\right). \end{equation}			
				We claim that 
				\begin{equation}
					\label{eq:claim_correlated_y}
					y(t)>0, \qquad 0<t<\infty.
				\end{equation}
				Therefore, the result follows from Lemma~\ref{lem:Bass_inequality}. 
				
				To prove~\eqref{eq:claim_correlated_y}, let 
				$$
				\tilde{f}_k(t):=\frac{f_k}{a_k}
				$$
				denote the fraction of adopters within group~$k$, and let
				$$
				H_k(\tilde{f}_k):=(1-\tilde{f}_k)(p_k+q_kf^{\rm het}).
				$$ 
				Then~\eqref{eq:q_mild_heterogeneous_Bass} reads
				$$
				\frac{d}{dt}\tilde{f}_k(t)=H_k(\tilde{f}_k), \qquad \tilde{f}_k(0)=0,\qquad k=1,\ldots,K.
				$$
				Hence,
				$$
				\begin{aligned}
					H_i(\tilde{f}_j)-\frac{d}{dt}\tilde{f}_j&=\left(1-\tilde{f}_j\right)(p_i+q_if^{\rm het})-\left(1-\tilde{f}_j\right)(p_j+q_jf^{\rm het})\\&=\left(1-\tilde{f}_j\right)\left(\left(p_i-p_j\right)+f^{\rm het}\left(q_i-q_j\right)\right).
				\end{aligned}
				$$
				
				Let $i>j$ be for which~\eqref{eq:het_condition} holds.
				Since  $\tilde{f}_j, f^{\rm het}>0$ for  $t>0$, 
				then
				$$
				\left(1-\tilde{f}_j\right)\left(\left(p_i-p_j\right)+f^{\rm het}\left(q_i-q_j\right)\right)>0,
				$$ 
				and so
				$$
				\frac{d}{dt}\tilde{f}_j<H_i(\tilde{f}_j), \qquad 0<t<\infty.
				$$
				Therefore, by Lemma~\ref{lem:dif_ineq},
				\begin{equation}
					\label{eqs:f_ineqs}
					\tilde{f}_j(t)<\tilde{f}_i(t),\qquad 0<t<\infty.
				\end{equation}
				Hence,
				\begin{equation*}
					0<(\tilde{f}_i-\tilde{f}_j)\left(p_i-p_j+f^{\rm het}\left(q_i-q_j\right)\right)=\left(\frac{f_i}{a_i}-\frac{f_j}{a_j}\right)\left(p_i-p_j+f^{\rm het}\left(q_i-q_j\right)\right)
				\end{equation*}
				for $0<t<\infty$. This inequality becomes an equality whenever condition~\eqref{eq:het_condition} does not hold, so that $p_i=p_j$ and 
				$q_i=q_j$. Hence,
				\begin{align*}
					0&<\sum_{i=1}^{K}\sum_{j=1}^{K}a_ia_j\left(\frac{f_i}{a_i}-\frac{f_j}{a_j}\right)\left(p_i-p_j+f^{\rm het}\left(q_i-q_j\right)\right)
					\\&=~~\sum_{i=1}^{K}f_ip_i\sum_{j=1}^{K}a_j-\sum_{i=1}^{K}f_i\sum_{j=1}^{K}a_jp_j-\sum_{i=1}^{K}a_ip_i\sum_{j=1}^{K}f_j+\sum_{i=1}^{K}a_i\sum_{j=1}^{K}f_jp_j
					\\&~~+f^{\rm het}\left(
					\sum_{i=1}^{K}f_iq_i\sum_{j=1}^{K}a_j-\sum_{i=1}^{K}f_i\sum_{j=1}^{K}a_jq_j-\sum_{i=1}^{K}a_iq_i\sum_{j=1}^{K}f_j+\sum_{i=1}^{K}a_i\sum_{j=1}^{K}f_jq_j\right)
					\\&=2\left(\sum_{i=1}^{K}f_ip_i-f^{\rm het}\bar{p}+f^{\rm het}\left(\sum_{i=1}^{K}f_iq_i-f^{\rm het}\bar{q}\right)\right)=2y,
				\end{align*}
				see~\eqref{eq:y_correlated}. 
				Therefore, we proved~\eqref{eq:claim_correlated_y}.
			\end{proof}
			This result was also derived by Fibich and Golan in~\cite{fibich2021diffusion}, but their proof is much more complicated, since they analyzed the discrete Bass model directly.
			\begin{corollary}
				\label{cor:correlated}
				Let $\{p_k\}$ and $\{q_k\}$ be monotonically increasing in~$k$, see~\eqref{eq:pq_correlated}. Then the adoption level within each group in the compartmental Bass model~\eqref{eq:q_mild_heterogeneous_Bass} is monotonically increasing in~$k$, i.e.,
				$$
				\tilde{f}_1(t)\leq \tilde{f}_2(t)\leq\ldots\leq \tilde{f}_K(t), \qquad 0<t<\infty.
				$$
			\end{corollary}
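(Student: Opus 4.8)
The plan is to reuse, almost verbatim, the comparison argument already carried out in the proof of Theorem~\ref{lem:pos_cor}, but now tracking the non-strict case as well. Recall that by~\eqref{eq:q_mild_heterogeneous_Bass} the within-group fractions $\tilde{f}_k := f_k/a_k$ satisfy
\begin{equation*}
	\tddt \tilde{f}_k(t) = H_k(\tilde{f}_k), \qquad H_k(\tilde{f}) := (1-\tilde{f})(p_k + q_k f^{\rm het}), \qquad \tilde{f}_k(0)=0,
\end{equation*}
and that $0 \le f_k \le a_k$, so that $0 \le \tilde{f}_k \le 1$ and in particular $1 - \tilde{f}_k \ge 0$, with strict inequality $\tilde f_k<1$ for every finite $t$.

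First I would compare consecutive groups. Fix $k$ with $1 \le k \le K-1$. Using the monotonicity assumptions $p_k \le p_{k+1}$ and $q_k \le q_{k+1}$ from~\eqref{eq:pq_correlated}, together with $f^{\rm het} \ge 0$ and $1 - \tilde{f}_k \ge 0$, I obtain
\begin{equation*}
	\tddt \tilde{f}_k = (1-\tilde{f}_k)(p_k + q_k f^{\rm het}) \le (1-\tilde{f}_k)(p_{k+1}+q_{k+1}f^{\rm het}) = H_{k+1}(\tilde{f}_k).
\end{equation*}
Since $\tddt \tilde{f}_{k+1} = H_{k+1}(\tilde{f}_{k+1})$ with the same initial value $\tilde{f}_{k+1}(0) = 0 = \tilde{f}_k(0)$, and $H_{k+1}$ is Lipschitz in its argument (its slope $-(p_{k+1}+q_{k+1}f^{\rm het})$ is bounded because $f^{\rm het}$ is a fixed bounded function of $t$), a comparison of $\tilde{f}_k$ with $\tilde{f}_{k+1}$ should yield $\tilde{f}_k(t) \le \tilde{f}_{k+1}(t)$ for $0 < t < \infty$. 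Chaining these inequalities over $k=1,\dots,K-1$ then gives the asserted ordering; alternatively one can run the same comparison directly for any pair $j<i$, exactly as in the derivation of~\eqref{eqs:f_ineqs}.

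The one point that needs care, and which I expect to be the main (though routine) obstacle, is that Lemma~\ref{lem:dif_ineq} delivers a strict conclusion from a strict differential inequality, whereas here the inequality is only non-strict and in fact becomes an equality precisely when $p_k=p_{k+1}$ and $q_k=q_{k+1}$ (in which case $\tilde{f}_k \equiv \tilde{f}_{k+1}$, consistent with the non-strict claim). I would close this gap in one of two standard ways. Either invoke the non-strict comparison principle directly: setting $w := \tilde{f}_k - \tilde{f}_{k+1}$, one has $w(0)=0$ and, wherever $w>0$, $\tddt w \le H_{k+1}(\tilde{f}_k) - H_{k+1}(\tilde{f}_{k+1}) \le L\,w$ with $L$ the Lipschitz constant, so a Gr\"onwall argument forces $w \le 0$. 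Or perturb $H_{k+1}$ to the strictly larger field $(1-\tilde{f})(p_{k+1}+\delta+q_{k+1}f^{\rm het})$, use $\tilde f_k<1$ to get a strict inequality $\tddt\tilde f_k < H_{k+1}^{\delta}(\tilde f_k)$, apply Lemma~\ref{lem:dif_ineq} to obtain $\tilde{f}_k < g^{\delta}$ for the perturbed solution $g^{\delta}$, and let $\delta \downarrow 0$ using continuous dependence on parameters so that $g^{\delta} \to \tilde{f}_{k+1}$. Either route yields $\tilde f_k\le\tilde f_{k+1}$ and hence the full monotone chain $\tilde f_1\le\tilde f_2\le\dots\le\tilde f_K$.
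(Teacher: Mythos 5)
Your proposal is correct and follows essentially the same route as the paper: the paper's proof simply cites relation~\eqref{eqs:f_ineqs}, which was established inside the proof of Theorem~\ref{lem:pos_cor} by exactly the pairwise comparison via Lemma~\ref{lem:dif_ineq} that you carry out. Your extra care about the non-strict case (where $p_i=p_j$ and $q_i=q_j$, so the two groups satisfy identical ODEs and $\tilde f_i\equiv\tilde f_j$ by uniqueness) is a legitimate and correctly handled refinement of the same argument.
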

			\begin{proof}
				This follows from relation~\eqref{eqs:f_ineqs}.
			\end{proof}
			This result is intuitive. Indeed, let $1 \le k < m \le K$. Then nonadopters from group~$k$ experience both weaker external influences $p_k \le  p_m$ and weaker internal influences $q_kf \le  q_m f$
			than nonadopters from group~$m$,
			see~\eqref{eq:q_mild_heterogeneous_Bass}. Therefore, they are slower to adopt.

			In light of Theorem~\ref{lem:pos_cor}, it is natural to ask whether $f^{\rm het}<f^{\rm hom}$ even if $\{p_j\}$ and
			$\{q_j\}$ are not positively  monotonically related. The following example shows that this is not always the case.
			
			\begin{example}
				Let $K=2$, $a_1=a_2=\frac{1}{2}$, $p_1=0$, $p_2=2p$, $q_{1,1}=0$, $q_{1,2}=0$, $q_{2,1}=4q$, and $q_{2,2}=0$ (see Figure~\ref{fig:het_greater_than_hom}A). Then~\eqref{eq:heterogeneous_Bass} reads
				\begin{equation}
					\label{eq:het_faster_sys}
					\begin{cases}
						f_1'(t) = \left(\frac{1}{2}-f_1\right)4qf_2,& \qquad f_1(0)=0,\\
						f_2'(t) = \left(\frac{1}{2}-f_2\right)2p,& \qquad f_2(0)=0.
					\end{cases}
				\end{equation}
				Since $q_{2,1}\neq q_{1,1}$, the network is not mildly-heterogeneous in $q$. Since $q_1=2q$ and $q_2=0$, see~\eqref{eq:max_q_compartmental}, the corresponding homogeneous model is~\eqref{eq:homogeneous_Bass} with $\bar{p}:=p$ and $\bar{q}:=\frac{1}{2}\left(q_1+q_2\right)=q$, see~\eqref{eq:p_bar_q_bar}.
				\begin{lemma}
					\label{lem:het_faster_hom}
					Let $f^{\rm het}(t)$ and $f^{\rm hom}$ denote the fraction of adopters in the heterogeneous model~\eqref{eq:het_faster_sys} and in the corresponding homogeneous model~\eqref{eq:homogeneous_Bass} with $\bar{p}=p$ and $\bar{q}=q$, respectively. If $q>p>0$, then
					\begin{equation}
						\label{eq:het_greater_hom}
						f^{\rm het}(t)>f^{\rm hom}(t), \qquad 0<t\ll 1.
					\end{equation}
				\end{lemma}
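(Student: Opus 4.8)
The plan is to compare the small-time Taylor expansions of $f^{\rm het}=f_1+f_2$ and $f^{\rm hom}$ about $t=0$. Since $f_1,f_2$ solve the polynomial system~\eqref{eq:het_faster_sys} and $f^{\rm hom}$ solves the autonomous ODE~\eqref{eq:homogeneous_Bass}, all three functions are smooth near $t=0$, so Taylor's theorem with remainder applies, and it suffices to identify the first order at which the two expansions differ.

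First I would record the values and first derivatives at $t=0$. From~\eqref{eq:het_faster_sys}, $f_1(0)=f_2(0)=0$, $f_2'(0)=\tfrac12\cdot 2p=p$, and $f_1'(0)=\tfrac12\cdot 4q\,f_2(0)=0$, so $(f^{\rm het})'(0)=p$. From~\eqref{eq:homogeneous_Bass} with $\bar p=p$, $\bar q=q$, we get $(f^{\rm hom})'(0)=(1-0)(p+q\cdot 0)=p$. Thus the two expansions agree to first order, and the comparison must be settled at second order.

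Next I would differentiate once more. Differentiating $f_2'=(\tfrac12-f_2)2p$ gives $f_2''(0)=-2p\,f_2'(0)=-2p^2$; differentiating $f_1'=(\tfrac12-f_1)4q f_2$ and using $f_1'(0)=0$, $f_2(0)=0$, $f_2'(0)=p$ gives $f_1''(0)=\tfrac12\cdot 4q\cdot p=2pq$. Hence $(f^{\rm het})''(0)=2pq-2p^2=2p(q-p)$. For the homogeneous model, differentiating $(f^{\rm hom})'=(1-f^{\rm hom})(p+q f^{\rm hom})$ yields $(f^{\rm hom})''(0)=(f^{\rm hom})'(0)\,[\,q-p-2q f^{\rm hom}(0)\,]=p(q-p)$. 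Therefore $(f^{\rm het})''(0)-(f^{\rm hom})''(0)=p(q-p)$, which is strictly positive precisely because $q>p>0$.

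Finally, since the two functions share the same value and first derivative at $t=0$, Taylor's theorem gives
$$f^{\rm het}(t)-f^{\rm hom}(t)=\tfrac12\big[(f^{\rm het})''(0)-(f^{\rm hom})''(0)\big]t^2+o(t^2)=\tfrac12\,p(q-p)\,t^2+o(t^2),$$
which is positive for all sufficiently small $t>0$, establishing~\eqref{eq:het_greater_hom}. The computation is routine; the only point worth flagging is that the first-order terms necessarily coincide (both equal $p$, since the aggregate external rate $\bar p=p$ is matched by construction and external influence dominates initially), so the sign of the difference is genuinely a \emph{second-order} effect, and the hypothesis $q>p$ is exactly the condition that renders this second-order coefficient positive.
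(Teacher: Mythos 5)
Your proposal is correct and follows exactly the paper's argument: both compute that the first derivatives at $t=0$ coincide (equal to $p$) and that the second derivatives are $2p(q-p)$ for the heterogeneous model versus $p(q-p)$ for the homogeneous one, concluding via Taylor expansion. You have simply written out the derivative computations that the paper labels ``easy to verify.''
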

				\begin{proof}
					It is easy to verify that $\frac{d}{dt}f^{\rm het}(0)=\frac{d}{dt}f^{\rm hom}(0)=p$, $\frac{d^2}{dt^2}f^{\rm het}(0)=2p(q-p)$, and $\frac{d^2}{dt^2}f^{\rm hom}(0)=p(q-p)$. Hence, when $q>p>0$, $\frac{d^2}{dt^2}f^{\rm het}(0)>\frac{d^2}{dt^2}f^{\rm hom}(0)$, and so~\eqref{eq:het_greater_hom} holds.
				\end{proof}
				\noindent Figure~\ref{fig:het_greater_than_hom}B shows that indeed,
				when $q>p$, then initially $f^{\rm het}(t)>f^{\rm hom}(t)$. This dominance flips, however, as $t\to\infty$.
			\end{example}
			
			\begin{figure}[ht!]
				\centering
				\begin{minipage}{.3\textwidth}
					\centering
					\begin{tikzpicture}
						\draw (5,0) rectangle (.4,3);
						
						\draw (1.4,1.25) circle(0.6cm) node {$p_1=0$};
						\draw (4, 1.25) circle(0.6cm) node {$p_2=2p$};
						
						\draw [->, thick] (3.35,1.25)--(2.05,1.25);
						
						\draw (2.7,1.75) node{4q};
						
						\draw (4.5,2.25) node{A};
					\end{tikzpicture}  
				\end{minipage}\hspace{15mm}
				\begin{minipage}{.3\textwidth}
					\begin{center}
						\scalebox{1}{\includegraphics{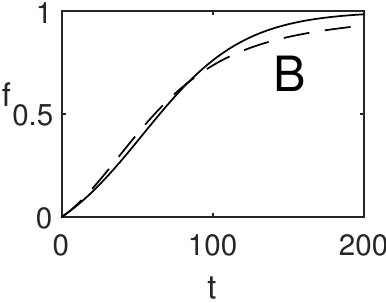}}
					\end{center}
				\end{minipage}
				\caption{Aggregate adoption in the heterogeneous (dashes) network given by~\eqref{eq:het_faster_sys} is locally greater than in the corresponding homogeneous (solid) network~\eqref{eq:homogeneous_Bass} for~$p=0.2q$.}
				\label{fig:het_greater_than_hom}
			\end{figure}
			Thus, $f^{\rm het}$ can be larger than $f^{\rm hom}$ if the heterogeneity in $q$ is not mild. But, can we have that $f^{\rm het}>f^{\rm hom}$ when the heterogeneity in $q$ is mild?
			
			We now proceed to analyze a case of a mild heterogeneity in $q$, in which $\{p_j\}$ and $\{q_j\}$ are negatively
			monotonically related.
			\begin{example}
				Let $K=2$, $a_1=a_2=\frac{1}{2}$, $p_1=0$, $q_1=2q$, $p_2=2p$, and $q_2=0$. Then the compartmental model~\eqref{eq:q_mild_heterogeneous_Bass} reads
				\begin{equation}
					\label{eq:deterministic_system2}
					\begin{cases}
						f_1'(t) = \left(\frac{1}{2}-f_1\right)2q(f_1+f_2),& \qquad f_1(0)=0,\\
						f_2'(t) = \left(\frac{1}{2}-f_2\right)2p,& \qquad f_2(0)=0.
					\end{cases}
				\end{equation}
				
				\begin{lemma}
					\label{lem:het<hom_pq}
					Let $f^{\rm het}(t)$ and $f^{\rm hom}(t)$ denote the fraction of adopters in the heterogeneous model~\eqref{eq:deterministic_system2} and in the corresponding homogeneous model~\eqref{eq:homogeneous_Bass} with $\bar{p}=p$ and $\bar{q}=q$, respectively. If $p\geq q$, then $f^{\rm het}(t)<f^{\rm hom}(t)$ for $0<t<\infty$.
				\end{lemma}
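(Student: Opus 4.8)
The plan is to follow the same route as in Theorem~\ref{lem:pos_cor}: set $\bar p=p$, $\bar q=q$, define $y$ by~\eqref{eq:y_def}, prove $y(t)>0$ for $0<t<\infty$, and then conclude $f^{\rm het}<f^{\rm hom}$ via the Bass Inequality Lemma~\ref{lem:Bass_inequality}. The essential difference from Theorem~\ref{lem:pos_cor} is that here $\{p_k\}$ and $\{q_k\}$ are \emph{negatively} related, so the group ordering runs opposite to Corollary~\ref{cor:correlated} and must be re-derived rather than quoted.

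First I would compute $y$ explicitly for~\eqref{eq:deterministic_system2}. Using $1-f^{\rm het}=(a_1-f_1)+(a_2-f_2)$ and $\frac{d}{dt}f^{\rm het}=f_1'+f_2'$ exactly as in the proof of Theorem~\ref{lem:pos_cor}, together with $p_1=0$, $p_2=2p$, $q_1=2q$, $q_2=0$, $a_1=a_2=\tfrac12$, I expect the two group-terms to be negatives of one another up to the scalar factor $(p-qf^{\rm het})$, so that $y$ collapses to the product
\[
y=(f_2-f_1)\,(p-qf^{\rm het}).
\]
The whole lemma then reduces to showing that both factors are positive on $0<t<\infty$. The second factor is immediate: since each $f_k$ stays strictly below its cap $a_k=\tfrac12$, we have $f^{\rm het}(t)<1$ for every finite $t$, and the hypothesis $p\ge q$ gives $qf^{\rm het}<q\le p$, hence $p-qf^{\rm het}>0$.

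The main obstacle is the first factor, $f_2-f_1>0$, i.e. that the high-$p$/low-$q$ group overtakes the low-$p$/high-$q$ group; this is not obvious a priori because the two groups have opposite advantages. I would establish it with the comparison Lemma~\ref{lem:dif_ineq} applied to the within-group fractions $\tilde f_k:=f_k/a_k=2f_k$, which satisfy $\tilde f_2'=(1-\tilde f_2)2p=:H_2(\tilde f_2)$ and $\tilde f_1'=(1-\tilde f_1)2qf^{\rm het}$. The key point is that the \emph{same} inequality $qf^{\rm het}<p$ yields
\[
\tilde f_1'=(1-\tilde f_1)2qf^{\rm het}<(1-\tilde f_1)2p=H_2(\tilde f_1),\qquad 0<t<\infty,
\]
while $\tilde f_1(0)=\tilde f_2(0)=0$ and $H_2$ is linear, hence Lipschitz. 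Lemma~\ref{lem:dif_ineq} then gives $\tilde f_1<\tilde f_2$, i.e. $f_1<f_2$, on $(0,\infty)$.

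Combining the two signs gives $y>0$ on $(0,\infty)$, so $\frac{d}{dt}f^{\rm het}<(1-f^{\rm het})(\bar p+\bar q f^{\rm het})$, and Lemma~\ref{lem:Bass_inequality} finishes the proof. The point I would emphasize is that a single inequality, $qf^{\rm het}<p$ (equivalently $p\ge q$ together with $f^{\rm het}<1$), does double duty: it makes $p-qf^{\rm het}$ positive and simultaneously forces the ordering $f_1<f_2$. This coupling is exactly why the threshold in the hypothesis is $p\ge q$, and it also indicates why one should not expect the conclusion to survive unchanged once $p<q$, since the factor $p-qf^{\rm het}$ would then change sign for large~$t$.
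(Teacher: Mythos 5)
Your proof is correct and follows essentially the same route as the paper's: the paper likewise reduces the problem to the sign of $y=(f_1-f_2)(qf^{\rm het}-p)$, observes that $p\ge q$ together with $f^{\rm het}<1$ forces $qf^{\rm het}-p<0$ and hence $f_2>f_1$, and concludes via Lemma~\ref{lem:Bass_inequality}. Your write-up is in fact slightly more careful than the paper's: you justify $f_1<f_2$ explicitly through the comparison Lemma~\ref{lem:dif_ineq} applied to $\tilde f_k=2f_k$ (a step the paper leaves implicit), and you state the sign of $y$ correctly, whereas the paper's text contains a slip asserting $y(t)<0$ where $y(t)>0$ is both what the computation gives and what Lemma~\ref{lem:Bass_inequality} requires.
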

				\begin{proof}
					By~\eqref{eq:deterministic_system2}, $f^{\rm het}:=f_1+f_2$ satisfies
					\begin{equation}
						\label{eq:deterministic_pq}
						\frac{d}{dt}f^{\rm het}(t)=\left(1-f^{\rm het}\right)\left(p+qf^{\rm het}\right)-y, \qquad y:=\left(f_1-f_2\right)\left(qf^{\rm het}-p\right).
					\end{equation}
					Since $p\geq q$, then $qf^{\rm het}-p<0$ and so by \eqref{eq:deterministic_system2}, $f_2>f_1$ for $0<t<\infty$. This implies that $y(t)<0$. Therefore, by Lemma~\ref{lem:Bass_inequality}, $f^{\rm het}(t)<f^{\rm hom}(t)$.
				\end{proof}
			\end{example}

			\begin{figure}[ht!]
				\begin{center}
					\scalebox{1}{\includegraphics{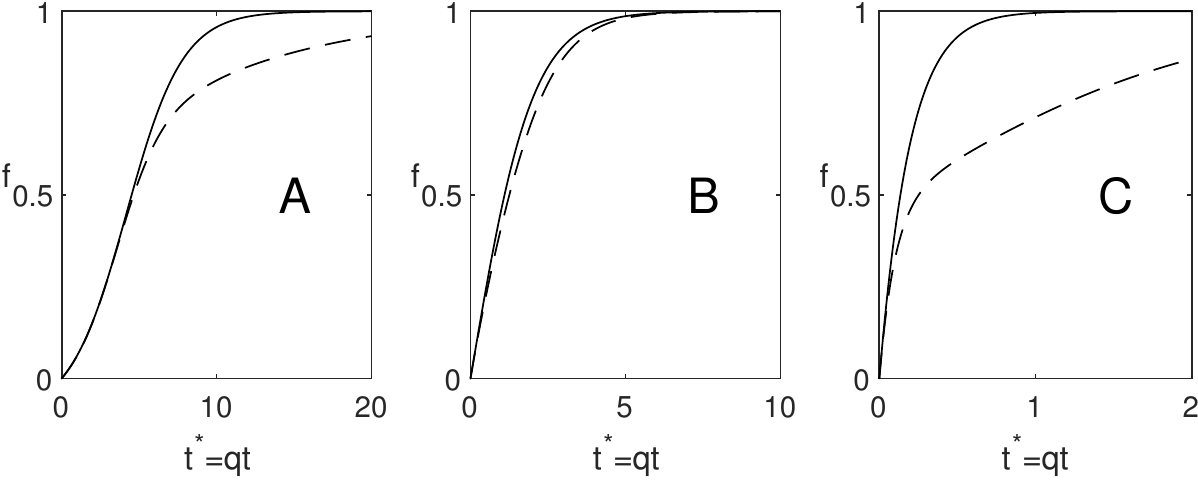}}
					\caption{Aggregate adoption in the heterogeneous (dashes) compartmental network~\eqref{eq:deterministic_system2} is slower than in the corresponding homogeneous (solid) network~\eqref{eq:homogeneous_Bass}. A)~$p=0.1q$. B)~$p=q$. C)~$p=10q$.}
					\label{fig:heter_vs_homog_fc_M=10000}
				\end{center}
			\end{figure}
			
			
			Numerical simulations of~\eqref{eq:deterministic_system2} confirm that $f^{\rm het}(t)<f^{\rm hom}(t)$ for $p\geq q$  (Figure~\ref{fig:heter_vs_homog_fc_M=10000}B-C). Moreover, they show that $f^{\rm het}(t)<f^{\rm hom}(t)$ for $p<q$ as well (Figure~\ref{fig:heter_vs_homog_fc_M=10000}A). 
			This suggests, therefore, that perhaps in the mildly-heterogeneous case~\eqref{eq:q_mild_heterogeneous_Bass}, $f^{\rm het}<f^{\rm hom}$ for any $\{p_k\}$ and  $\{q_k\}$, and not just when they are  positively  monotonically related. Whether this is the case, however, is currently open. A proof of Lemma~\ref{lem:het<hom_pq} for $p<q$ is also open.
			
			
			
			\section{Discussion}
			\label{sec:discussion}
			
			The problem of proving that solutions of a system of master equations converge, as the number of individuals tends to infinity, to solutions
			of a compartmental model 
			has been widely studied in many disciplines. 
			The case most relevant for the Bass model for the 
			diffusion of new products is that of the
			susceptible infected (SI) model in epidemiology~\cite{MR3116158}, 
			in which susceptible individuals correspond to nonadopters and infectives
			correspond to adopters.
			Some epidemiological models also include recovered individuals
			which correspond to ``non-contagious'' adopters, 
			a category not included in the Bass model, but included in the Bass-SIR model~\cite{Bass-SIR-model-16, Bass-SIR-analysis-17}.
			
			The approach developed here for proving that convergence consists of three steps.
			\begin{enumerate}
				\item
				The full set \eqref{eqs:masterHeterofc} of master equations
				for the probabilities of the states of all subsets of nodes
				is reduced to equations for a smaller set of variables, which is sufficient in the sense that the expected fraction of adopters can be written in
				terms of them, and is closed in the sense that the equations for those variables involve only those variables. 
				The reduced equations for the homogeneous complete network 
				are~\eqref{eqs:masterfc}; for the homogeneous circle network are~\eqref{eq:mastereqs1d},
				and for the heterogeneous complete network
				are~\eqref{eqs:pode}.
				\item
				The reduced equations, which are a finite system of ODEs, are embedded into an infinite system of ODEs and the convergence of that system to the infinite limit system is proven.
				\item
				An ansatz, such as~\eqref{eq:inf_sol_hom} for the homogeneous complete network, \eqref{eq:substitution}~for the homogeneous
				circle network, and~\eqref{eqs:subbed_equation} for the heterogeneous complete network, provides an exact closure of the infinite limit system, which reduces that system to the compartmental model.\footnote{The infinite limit system is simpler than the large but finite system of reduced master equations that tends to it, because the
					infinite system no longer contains the number of individuals as a parameter. Hence it may be easier to obtain an exact moment closure for the limit system.}
			\end{enumerate}

			Regarding the analogous problem of proving convergence of epidemiological models to their mean field limits, there are several works with a variety of methods. To the best of our knowledge, all of these studies only considered  homogeneous complete networks, and it is not clear whether their methods can be extended to other types of networks.\footnote{
				Simon and Kiss~\cite{MR3116158} also proved the convergence for
				n-random networks, but used an approximation for the number of SI pairs to close the system, which was not rigoroudsly justified.} 
			In contrast, our methodology can be applied to various types of networks,
			with only minor modifications. 
			
			The most similar work to ours is~\cite{MR3116158}, in which Simon and Kiss used an ODE approach to prove the convergence of the SIS model on complete networks to its compartmental limit. The key difference between their approach and ours is that our starting point 
			are the master equations for the probabilities of the states of all subsets of nodes (a {\em bottom-up} approach), whereas Simon and Kiss started from the
			master equations for the probability of having $k$ adopters in the system at time~$t$ (a {\em top-down} approach). It is not clear, however, whether a top-down approach can be extended to 
			other types of netowrks beyond homogeneous complete networks, without introdcing a closure which is not rogorously justified. Other approaches for proving convergence of epidemiological models on homogeneous complete networks include a PDE approach~\cite{Diekmann2000, MR3116158, MR2776920}, a stochastic approach~\cite{Ethier2005, Kurtz-70, Kurtz-71, MR3116158}, and  an elementary ODE approach that only requires a finite number of ODEs~\cite{Armbruster-17}. 
		
		We now discuss the general problem of deducing convergence of solutions of finite systems of ODEs to those of infinite
		systems of ODEs. The formal limit system as $M\to\infty$ of a system of $M$ ODEs and initial conditions is the infinite system
		obtained by taking the limit of the equation and initial condition for each $u_k$ separately, with all components $u_j$
		considered to be fixed.
		
		Not surprisingly, if the component differential equations or initial conditions do not converge
		formally, then the solutions of the finite system may not converge. For example, if $\tddt u_k=M-u_k$ for $1\le k\le M$ and
		$u_k(0)=0$, then for $t>0$ the solutions $u_k(t)=M(1-e^{-t})$ of the finite system tend to infinity as $M\to\infty$.  Similarly, if the initial value of $u_M$
		is chosen sufficiently large then solutions of the finite system may tend to infinity even though the system converges
		formally to a limit system. For example, let
		\begin{equation}\label{eq:biguM}\tddt u_k=u_{k+1} \quad\text{and}\quad u_k(0)=0\end{equation}
		for $1\le k\le M-1$, and let $\tddt u_M=0$
		with $u_M(0)=(M!)^2$. For $M>k$ the ODE and initial condition for $u_k$ are \eqref{eq:biguM} so the system tends formally to
		the infinite system in which \eqref{eq:biguM} holds for all $k$. However, $u_{M-1}(t)=(M!)^2t$, $u_{M-2}(t)=(M!)^2\frac{t^2}2$, and by induction
		$u_{M-j}(t)=(M!)^2 \frac{t^j}{j!}$, so setting $k=M-j$ yields $u_k(t)=\frac{(M!)^2t^M}{(M-k)!}$, which for $t>0$ tends to
		infinity with $M$.
		
		Less obviously, {\em the formal convergence of the component equations and initial conditions does not suffice in general to yield
			convergence of the solutions of the finite system to a specified soluton of the infinite system obtained as the formal limit,
			even when the initial data of the finite systems are bounded uniformly in~$M$}. This shows that the result of
		Theorem~\ref{thm:convergence} is nontrivial.                        
		To demonstrate this phenomenon, consider the model system
		\begin{equation}\label{eq:2krule}
			\tddt u_{k}=a(k)(u_{k+1}-u_k), \qquad u_k(0)=1
		\end{equation}
		for $1\le k\le M$, where the parameters $a(k)$ and the value used to replace the non-existent component $u_{M+1}$ appearing in the
		ODE for $u_M$ remain to be specified. For later use, note that
		solving \eqref{eq:2krule} with $u_{k+1}$ considered to be a known function yields
		\begin{equation}
			\label{eq:2krulesol}
			u_k(t)=e^{-a(k)t}+a(k)e^{-a(k)t}\int_0^t u_{k+1}(s)e^{a(k)s}\,ds.
		\end{equation}
		For any fixed $k$ the rule \eqref{eq:2krule} applies for all $M\ge k+1$, so the formal limit system consists of
		\eqref{eq:2krule} for all $1\le k<\infty$, no matter what value is chosen for $u_{M+1}$. For any choice
		of the parameters $a(k)$ this limit system has a solution
		\begin{equation}\label{eq:expectedsol}
			u_k(t)\equiv1 \quad\text{for all $k$.}
		\end{equation}
		However, we will show that for certain parameters $a(k)$ and values used to replace $u_{M+1}$ 
		the solutions of the finite systems do not converge to the specified solution \eqref{eq:expectedsol} of the limit system. Our
		method does not show whether the solutions converge  to a different solution of the limit system or do not converge.
		
		Two plausible replacements for $u_{M+1}$ are the value
		\begin{equation}\label{eq:uhigh1} u_{M+1}\equiv 1\end{equation}
		equal to the initial value of all components, and the value
		\begin{equation}\label{eq:uhigh0} u_{M+1}\equiv0\end{equation}
		which corresponds to simply deleting the non-existent variable $u_{M+1}$ from the ODE for $u_M$. When \eqref{eq:uhigh1} is
		used then \eqref{eq:expectedsol} holds for every fixed $k$ even when $M$ is finite, no matter how the parameters $a(k)$ are chosen,
		because the right sides of the ODEs in \eqref{eq:2krule} are then identically zero.
		
		When $a(k)=1$ for all $k$ and $u_M$ is determined by \eqref{eq:uhigh0} then the solutions $u_k$ of the finite system
		are not exactly equal to \eqref{eq:expectedsol}, but they converge to that value as $M\to\infty$. To see this, solve the ODE for $u_M$ to obtain $u_M(t)=e^{-t}$, then
		substitute this result into \eqref{eq:2krulesol} with $k=M-1$ to obtain $u_{M-1}=(1+t)e^{-t}$. By induction 
		we obtain from \eqref{eq:2krulesol} that $u_{M-j}(t)=P_{j}(t)e^{-t}$, where $P_j(t)$ is the Taylor polynomial approximation of $e^t$ whose highest term is
		$\frac{t^j}{j!}$. Setting $j=M-k$ yields $u_k(t)=P_{M-k}(t)e^{-t}$ for $1\le k\le M$. Since $P_{M-k}(t)$ converges to $e^t$ as
		$M\to\infty$, $u_k(t)$ tends to \eqref{eq:expectedsol} in that limit.
		
		When $a(k)=3^k$ and $u_{M+1}$ is given by \eqref{eq:uhigh0} then the solutions $u_k$ can no longer be calculated
		explicitly, but we can obtain upper bounds that imply that the solutions do not converge to
		\eqref{eq:expectedsol}. The solution formula~\eqref{eq:2krulesol} yields $u_M=e^{-3^Mt}$, and substituting this into \eqref{eq:2krulesol} for
		$k=M-1$ yields
		$u_{M-1}(t)=e^{-3^{M-1}t}+\frac{3^{M-1}}{3^M-3^{M-1}}(e^{-3^{M-1}t}-e^{-3^Mt})\le 2 e^{-3^{M-1}t}$.
		Assuming by induction that $u_{k+1}\le 2 e^{-3^{k+1}t}$ and substituting that estimate into \eqref{eq:2krulesol} yields
		$u_k(t)\le e^{-3^kt}+\frac{2\; 3^k}{3^{k+1}-3^k}(e^{-3^kt}-e^{-3^{k+1}t})\le 2 e^{-3^kt}$, which confirms that the estimate
		$u_k(t)\le 2 e^{-3^kt}$ holds for all $k\le M$. Since $2 e^{-3^kt}<1$ for $t>\frac{\ln 2}{3^k}$, $u_k(t)$ does not converge to
		$1$ as $M\to\infty$.

		\section{Proof of Lemma~\ref{lem:ODElem}}\label{sec:ODElem}
		\begin{proof}
			Let $g:=f_1-f_2$. Define
			\begin{equation}\label{eq:Edef}
				E(s)\eqdef \begin{cases} s^2& s>0\\0&s\le0.\end{cases}
			\end{equation}
			Then $E(g(t))$ is differentiable and
			\begin{equation}
				\label{eq:Egprime}
				\begin{aligned}
					\tfrac{d\hfil}{dt} E(g(t))&\leq\begin{cases} 2g(t)[H_2(f_1)-H_2(f_2)]&g(t)>0\\0&g(t)\le0\end{cases}
					\\&\le \begin{cases} 2g(t) L|g(t)|&g(t)>0\\0&g(t)\le0\end{cases}\quad=2LE(g(t)).
				\end{aligned}
			\end{equation}
			Hence $\frac{d\hfil}{dt} e^{-2Lt}E(g(t))\le0$, which can be integrated to yield $e^{-2Lt} E(g(t))\le E(g(0))=0$ for $t>0$, which by the definition of
			$E$ implies that
			\begin{equation}\label{eq:t>0gle0}
				\text{$g(t)\le0$ for $t>0$.}
			\end{equation}
			Now suppose that  there exists a positive $t_*$ at which $g(t_*)\ge0$.  By~\eqref{eq:t>0gle0},  $g(t_*)=0$. Hence, $t_*$
			is a local maximum of $g$, and so $g'(t_*)=0$. However, for $t>0$, 
			$g'(t)<H_2(f_2+g)-H_2(f_2)$, so $g'<0$ whenever $g=0$ at a positive time. This contradiction shows that no such $t_*$ exists, and hence
			$f_1(t)-f_2(t)=g(t)<0$ for all $t>0$.
		\end{proof}

			\noindent {\em Email address: \href{mailto:fibich@tau.ac.il}{fibich@tau.ac.il}}\newline
			{\em Email address: \href{mailto:amitgolan33@gmail.com}{amitgolan33@gmail.com}}\newline
			{\em Email address: \href{mailto:schochet@tauex.tau.ac.il}{schochet@tauex.tau.ac.il}}
		\end{document}